\newtheorem{theorem}{Theorem}[section]
\newtheorem{lemma}[theorem]{Lemma}
\newtheorem{corollary}[theorem]{Corollary}
\newtheorem{proposition}[theorem]{Proposition}
\newtheorem*{thmmain}{Theorem 1.1}
\theoremstyle{definition}
\newtheorem{definition}[theorem]{Definition}
\newtheorem{hyp}[theorem]{Hypothesis}
\theoremstyle{remark}
\newtheorem{remark}[theorem]{Remark}
\DeclareMathOperator{\cd}{cd}
\DeclareMathOperator{\Irr}{Irr}
\numberwithin{equation}{section}
\newcommand{\SL}[2]{\Sigma_{#1,#2}^L} 
\newcommand{\SR}[2]{\Sigma_{#1,#2}^R} 
\newcommand{\SLR}[2]{\Sigma_{#1,#2}^*} 
\begin{document}

\allowdisplaybreaks

\title[prime character degree graphs within a family]{On prime character degree graphs occurring within a family of graphs}

\author{Jacob Laubacher}
\address{Department of Mathematics, St. Norbert College, De Pere, WI 54115}
\email{jacob.laubacher@snc.edu}

\author{Mark Medwid}
\address{Department of Mathematical Sciences, Rhode Island College, Providence, RI 02908}
\email{mmedwid@ric.edu}

\subjclass[2010]{Primary 20D10; Secondary 20C15, 05C75}

\date{\today}

\keywords{character degree graphs, solvable groups, families of graphs\\\indent\emph{Corresponding author.} Jacob Laubacher \Letter~\href{mailto:jacob.laubacher@snc.edu}{jacob.laubacher@snc.edu} \phone~920-403-2961.}

\begin{abstract}
In this paper we investigate families of connected graphs which do not contain an odd cycle in their complement. 
Specifically, we consider graphs formed by two complete graphs connected in a particular way.
We determine which of these graphs can or cannot occur as the prime character degree graph of a solvable group. An obvious expansion and generalization can also be considered, of which we make mention.
\end{abstract}

\maketitle

\section{Introduction}

Let $G$ be a finite solvable group. As is conventional, we set $\Irr(G)$ as the irreducible characters of $G$, and $\cd(G)=\{\chi(1)~:~\chi\in\Irr(G)\}$. The prime character degree graph, denoted $\Delta(G)$, has the vertex set $\rho(G)$, which is the set of all prime divisors of $\cd(G)$. There is an edge between two distinct vertices $p$ and $q$ if there exists some $a\in\cd(G)$ such that $pq$ divides $a$. When discussing $\Delta(G)$, often times we will interchange the words prime and vertex, as their meaning is synonymous in this context.

Prime character degree graphs have been studied for some time (see \cite{H}, \cite{I}, \cite{L3}, \cite{MW}, or \cite{Z} for a broad overview). In this setting, given a simple undirected graph $\Gamma$, one of the main questions that one can ask when investigating prime character degree graphs is whether or not there exists some solvable group $G$ such that $\Gamma=\Delta(G)$. P\'alfy's condition, most notably, helps to begin to answer this question by declaring that for any three vertices in $\Delta(G)$ there must be at least one edge incident between two of them. Another way to view this landmark result from \cite{P} is that there is no triangle (or 3-cycle) in the complement graph $\overline{\Delta}(G)$. P\'alfy's condition was generalized in 2018 in \cite{etal} to say that $\overline{\Delta}(G)$ must contain no odd cycle. These results about the complement are extremely useful when trying to eliminate graphs, as seen in the classifications for graphs with five and six vertices in \cite{Lewis} and \cite{BL2}, respectively.


Some work has been done with regards to families of graphs. In \cite{BL}, Bissler and Lewis constructed a family of graphs that do not occur as the prime character degree graph of any solvable group. The authors make use of P\'alfy's condition where necessary; they also introduce an additional notion called \emph{admissibility} of a vertex. Informally, admissible vertices of a graph are those that are not solely responsible for preventing the graph from manifesting as the prime character degree graph of a solvable group. Bissler and Lewis showed that a graph consisting of only admissible vertices cannot appear as the prime character degree graph of a solvable group. Some particular families of graphs are also considered and classified in \cite{BissLaub}; one goal of this paper is to expand the construction and results of \cite{BissLaub} in a natural way. 


Another motivation for this paper is to provide more examples of graphs satisfying the technical hypotheses found in \cite{BLL}. To that end, in Section \ref{gfamilies}, we introduce the two graph families $\{\SL{k}{n}\}$ and $\{\SR{k}{n}\}$. Both families are constructed with a newly modified version of the construction from \cite{BissLaub}. Save for one alteration, the two families are constructed identically. Graphs in both families consist of two distinct, non-overlapping subgraphs: a ``left'' subgraph and ``right'' subgraph. The left subgraph is a complete graph on $k$ vertices, while the right subgraph is a complete graph on $k+n$ vertices. The choice of $n$ also dictates what the edges connecting the left and right subgraphs look like: $n$ vertices in the left subgraph are connected in a one-to-two fashion to particular vertices in the right subgraph, while the remainder of vertices in the left subgraph are connected to the right subgraph in a one-to-one fashion according to their indices. 

The distinction of the two families stems from the addition of a special vertex to either the left or right subgraph (hence, an $L$ or $R$ superscript). This special vertex is adjacent to every vertex in its chosen subgraph, but is \emph{not} adjacent to any vertices in the other subgraph. As it turns out, this alteration affects the resulting graph's structure profoundly enough that both families need to be treated separately. 

Section \ref{first} is dedicated to studying the family $\{\SL{k}{n}\}$. For all integers $1\leq n\leq k$ we completely classify which graphs occur or do not occur as $\Delta(G)$ for some finite solvable group $G$. We do this through induction on $n$. Most of the details provided thoroughly go through the base case when $n=1$, while the inductive step is a natural extension from there. One can sum up the section, along with our main result for this paper, as follows: 

\begin{theorem}\label{firstfam}
The graph $\SL{k}{n}$ occurs as the prime character degree graph of a solvable group only when $(k,n)=(1,1)$ (see Figure \ref{fig11L}). 
\end{theorem}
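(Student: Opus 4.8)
The plan is to argue by contradiction from a minimal counterexample, but it is worth first recording why the cheap obstructions do not apply. In $\SL{k}{n}$ both the left block $\{v,\ell_1,\dots,\ell_k\}$ and the right block $\{r_1,\dots,r_{k+n}\}$ are complete, so \emph{every} non-edge of $\SL{k}{n}$ runs between the two blocks; consequently the complement $\overline{\SL{k}{n}}$ is bipartite, with the two blocks as its parts. A bipartite graph has no odd cycle, so P\'alfy's condition \cite{P} and its strengthening in \cite{etal} hold automatically and cannot by themselves rule $\SL{k}{n}$ out. The non-occurrence must therefore be forced by the finer machinery: the admissibility notion of \cite{BL} together with the technical criteria of \cite{BLL}. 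Accordingly, I would suppose $\SL{k}{n}=\Delta(G)$ for a solvable $G$ of minimal order and study $G$ through a minimal normal subgroup $N$ and the associated quotient graph $\Delta(G/N)$.

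Following the section's stated outline, the argument proceeds by induction on $n$, with the base case $n=1$ doing the real work. For $n=1$ I would first dispose of the single positive case: $\SL{1}{1}$ is the four-vertex graph consisting of a triangle on $\{\ell_1,r_1,r_2\}$ with the pendant $v$ attached to $\ell_1$ (Figure \ref{fig11L}), and this is realized as $\Delta(G)$ by a small explicit solvable group, so it \emph{does} occur. For $k\geq 2$ one must instead derive a contradiction. The leverage point is the special vertex $v$: it is adjacent to the entire left block but to none of the right block, exactly the kind of configuration the hypotheses of \cite{BLL} are designed to exploit. I would verify that every vertex of $\SL{k}{1}$ is admissible in the sense of \cite{BL}, and then use Clifford theory together with It\^o--Michler-type constraints to show that the non-adjacencies of $v$ to the complete right block cannot be realized simultaneously, contradicting minimality.

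For the inductive step I would relate $\SL{k}{n}$ to $\SL{k}{n-1}$ by vertex deletion. Removing a suitable right-block vertex (the ``second'' endpoint of one of the one-to-two attachments) and re-reading the remaining attachment as one-to-one turns $\SL{k}{n}$ into $\SL{k}{n-1}$; once the normal structure of the minimal counterexample has been pinned down, this deletion should correspond to passing to a quotient whose degree graph is the induced subgraph. Hence a solvable group realizing $\SL{k}{n}$ would produce one realizing $\SL{k}{n-1}$, which (since $(k,n-1)\neq(1,1)$ in this range) contradicts the inductive hypothesis. This is the ``natural extension'' promised in the introduction.

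The main obstacle is the base case. Because the complement is bipartite, none of the combinatorial tools apply, so the entire contradiction for $k\geq 2$ must come from the group-theoretic side: verifying the technical hypotheses of \cite{BLL} for $\SL{k}{1}$ uniformly in $k$, and extracting incompatibility from the interaction between the special vertex $v$ and the two complete blocks. I expect the delicate points to be (i) controlling which primes can lie in a minimal normal subgroup, so that completeness of the right block is forced to conflict with $v$'s isolation from it, and (ii) confirming that the vertex-deletion/quotient correspondence used in the inductive step remains valid across the full parameter range $1\leq n\leq k$.
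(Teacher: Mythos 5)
There is a genuine gap, and it sits exactly where you flagged uncertainty: the inductive step. You propose that deleting a right-block vertex turns $\SL{k}{n}$ into $\SL{k}{n-1}$ and that this deletion ``should correspond to passing to a quotient whose degree graph is the induced subgraph,'' so that a group realizing $\SL{k}{n}$ yields one realizing $\SL{k}{n-1}$. No such correspondence exists. The only result of this shape is Lemma \ref{lewisgraph}: \emph{if} $G$ has a normal Sylow $p$-subgroup $P$, then $\rho(G/P')=\rho(G)\setminus\{p\}$ --- and even then edges between neighbors of $p$ may be lost, so you do not get the induced subgraph. Worse, the entire thrust of the argument is to prove that $G$ has \emph{no} normal nonabelian Sylow subgroups, so this mechanism is unavailable by construction. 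The paper's induction runs in the opposite logical direction: the non-occurrence of $\SL{k}{m}$ (the inductive hypothesis) is used as an input to the \emph{admissibility} verifications for the vertices of $\SL{k}{m+1}$ --- when you delete a vertex or edges of $\SL{k}{m+1}$ you may land on $\SL{k}{m}$ or a subgraph of it, and you need to know those graphs do not occur. One then reruns the whole machinery (strong admissibility via Lemma \ref{strong}, Lemma \ref{pi} for $b_{m+2},\dots,b_k$, Hypothesis \ref{newhype} for $c$, minimality of the Fitting subgroup, and finally Lemma \ref{lemma3}) on $\SL{k}{m+1}$ from scratch. Nothing is ``transferred'' from a group for the larger graph to a group for the smaller one.

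The base case is also underspecified at the decisive moment. Establishing admissibility of all vertices and verifying the \cite{BLL} hypothesis for $c$ is the right program (and matches the paper), but ``Clifford theory together with It\^o--Michler-type constraints'' is not where the contradiction actually comes from. The paper's route is: (i) strong admissibility of $a_1,\dots,a_k,b_1,b_{k+1}$ (using Sass's diameter-three bounds and the $\Gamma_{k,t}$ classification of \cite{BissLaub}, which your sketch never invokes); (ii) Lemma \ref{pi} for $b_2,\dots,b_k$; (iii) Theorem \ref{newmain} for $c$; (iv) a separate argument that the Fitting subgroup is minimal normal; and then (v) Lemma \ref{lemma3} applied with $a=b_1$, $b=b_2$, $c=a_1$, $d=a_2$ delivers the contradiction. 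Steps (iv) and (v) are the actual engine, and they are absent from your outline. Your opening observation that the complement is bipartite (so P\'alfy-type conditions cannot kill the graph outright) is correct and consistent with the paper, but note that those conditions are still used heavily --- not on $\SL{k}{n}$ itself, but to rule out the \emph{subgraphs} that arise in the admissibility checks.
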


In Section \ref{last} we discuss further generalizations that could be considered. We address looking at one-to-$m$ edge mappings, for any natural number $m$, as well as increasing the number of special vertices on the left or right. Finally, for the sake of length and argument, we leave the family $\{\SR{k}{n}\}$ for a follow-up paper.

\section{Preliminaries}

In this section, we present a brief survey of various results and technical hypotheses that are useful in the proof of Theorem \ref{firstfam}. Many of the results used in this paper are classical, such as P\'alfy's condition from \cite{P}. We also depend upon more contemporary results, such as those found in \cite{etal}, \cite{BLL}, \cite{BL}, and \cite{Sass}.

We shall first touch upon work that has been done with regards to classifying families of graphs vis-\'a-vis their appearance as the prime character degree graphs of solvable groups. We then look at three related tools that can be used in such classifications: results on disconnected graphs, the absence of normal nonabelian Sylow subgroups, and character degree graphs of diameter three. 

\subsection{Families of graphs}

Families of graphs have been shown not to occur as the prime character degree graph of solvable groups for some time. Most notably, P\'alfy's condition below gives us a good starting point.

\begin{lemma}[P\'alfy's condition from \cite{P}]\label{PC}
Let $G$ be a solvable group and let $\pi$ be a set of primes contained in $\Delta(G)$. If $|\pi|=3$, then there exists an irreducible character of $G$ with degree divisible by at least two primes from $\pi$. (In other words, any three vertices of the prime character degree graph of a solvable group span at least one edge.)
\end{lemma}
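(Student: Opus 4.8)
The plan is to argue by contradiction through a minimal counterexample, following the structure of P\'alfy's original argument in \cite{P}. Suppose the conclusion fails, so there is a solvable group $G$ and a set of three primes $\pi=\{p,q,r\}\subseteq\rho(G)$ such that no degree in $\cd(G)$ is divisible by the product of any two of $p,q,r$; equivalently, $p,q,r$ are pairwise nonadjacent in $\Delta(G)$. Choose $G$ of minimal order among all such counterexamples. The first tool I would exploit is that for any normal subgroup $N\trianglelefteq G$ the degrees in $\cd(G/N)$ form a subset of $\cd(G)$, so $\Delta(G/N)$ is the induced subgraph of $\Delta(G)$ on $\rho(G/N)$. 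Minimality then forbids any proper quotient of $G$ from retaining all three primes as pairwise nonadjacent vertices, which is the lever used to pin down the normal structure of $G$.

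The second step is to reduce to a single chief factor. I would argue that $G$ has a unique minimal normal subgroup $V$, which by solvability is elementary abelian, say a $t$-group for some prime $t$: if $G$ had two distinct minimal normal subgroups $M_1,M_2$, then $G$ embeds in $G/M_1\times G/M_2$ and the degree sets of the two proper quotients would already witness $p,q,r$ pairwise nonadjacent, contradicting minimality. Standard Frattini-type arguments ($\Phi(G)=1$, $F(G)=V$, and $C_G(V)=V$) then let me write $G=V\rtimes H$ with $H\cong G/V$ acting faithfully and irreducibly on $V$. This is the familiar ``affine'' reduction that isolates the representation-theoretic core of the problem.

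With $V\trianglelefteq G$ in hand, Clifford theory translates degrees into orbit data. Every $\chi\in\Irr(G)$ either has $V\le\ker\chi$, so that $\chi$ is inflated from $H$, or it lies over a nontrivial $\lambda\in\Irr(V)\cong V$, in which case $\chi(1)$ is a multiple of the orbit length $[H:H_\lambda]$ of $H$ acting on $\Irr(V)$. The genuinely hard step, and the technical heart of the lemma, is to show that for a faithful solvable linear group $H$ acting on $V$ the prime divisors of these orbit lengths, combined with the degrees coming from $H=G/V$, cannot leave three primes mutually disconnected. This is exactly where one needs the fine structure theory of solvable linear groups and careful control of orbit sizes, and it is the obstacle I expect to absorb most of the work.

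Assembling these orbit and degree computations produces a character of $G$ whose degree is divisible by two of $p,q,r$, contradicting the choice of $G$. Hence no minimal counterexample exists and the lemma follows. The reformulation in terms of the complement, namely that $\overline{\Delta}(G)$ contains no triangle, is then immediate and is the form used repeatedly in the sequel.
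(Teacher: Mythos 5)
This lemma is quoted by the paper from P\'alfy's article \cite{P}; the paper supplies no proof of its own, so there is nothing internal to compare your argument against. Judged on its own terms, your proposal is an outline rather than a proof, and the gap is exactly where you admit it is. The reductions you describe (minimal counterexample, $\Phi(G)=1$, a unique minimal normal subgroup $V$ with $C_G(V)=V$, the splitting $G=V\rtimes H$, and Clifford theory converting degrees into orbit lengths of $H$ on $\Irr(V)$) are the standard opening moves, but they contain none of the content of the theorem. The entire substance is the step you defer: showing that for a solvable group $H$ acting faithfully and irreducibly on $V$, the primes dividing the orbit sizes on $\Irr(V)$ together with the primes dividing degrees in $\cd(H)$ cannot leave three primes pairwise nonadjacent. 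P\'alfy's actual argument at this point requires the Manz--Wolf structure theory of solvable linear groups and delicate estimates on orbit sizes (large/regular orbit theorems), followed by a genuine case analysis; writing ``this is the obstacle I expect to absorb most of the work'' is an acknowledgment that the proof has not been given, not a proof.

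A secondary, smaller gap: your uniqueness-of-minimal-normal-subgroup step does not follow as stated. If $M_1\neq M_2$ are minimal normal, then indeed $G\hookrightarrow G/M_1\times G/M_2$ and each $\cd(G/M_i)\subseteq\cd(G)$, so any of $p,q,r$ that survive into $\rho(G/M_i)$ stay pairwise nonadjacent there. But minimality is only contradicted if \emph{one} of the two quotients retains all three primes as vertices, and a priori the three primes could be distributed so that neither quotient sees all of them (one has $\rho(G)=\rho(G/M_1)\cup\rho(G/M_2)$ by the formation argument, but not that either set contains all of $\pi$). This case must be ruled out by a separate argument before you may assume $V$ is unique. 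Since the lemma is being cited as a known classical result, the appropriate course here is simply to cite \cite{P} rather than to reprove it; if you do want a self-contained proof, both of these gaps must be filled.
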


More recently in \cite{etal}, a generalization of P\'alfy's condition was brought to light.

\begin{theorem}\label{et}\emph{(\cite{etal})}
Let G be a finite solvable group. Then the complement graph $\overline{\Delta}(G)$ does not contain any cycle of odd length.
\end{theorem}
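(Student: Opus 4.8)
The plan is to argue by contradiction through a minimal counterexample, first reducing the problem from arbitrary odd cycles to a single rigid configuration and then bringing group-theoretic machinery to bear. Suppose $\overline{\Delta}(G)$ contains an odd cycle, and among all solvable groups with this property choose $G$ of minimal order; among all odd cycles of $\overline{\Delta}(G)$ choose one, say $C$ on vertices $p_1,p_2,\ldots,p_{2m+1}$, of minimum length. A shortest odd cycle carries no chord, since any chord splits $C$ into two shorter cycles whose lengths sum to an odd number, so exactly one of them is odd and strictly shorter; hence $C$ is an induced subgraph of $\overline{\Delta}(G)$. By Lemma \ref{PC} the complement contains no triangle, so $2m+1\geq 5$. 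Translating back to $\Delta(G)$, the primes $p_i$ form an ``anti-hole'': $p_i$ and $p_j$ are adjacent in $\Delta(G)$ precisely when $i$ and $j$ are non-consecutive modulo $2m+1$. The goal is to show that no finite solvable group can realize such an anti-hole.

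To exploit minimality I would pass to quotients. For any minimal normal subgroup $N\trianglelefteq G$ (necessarily an elementary abelian $q$-group, as $G$ is solvable) we have $\cd(G/N)\subseteq\cd(G)$ by inflation; consequently every non-edge of $\Delta(G)$ with both endpoints in $\rho(G/N)$ is also a non-edge of $\Delta(G/N)$, so if all vertices of $C$ lay in $\rho(G/N)$ then $C$ would reappear as an odd cycle of $\overline{\Delta}(G/N)$. Minimality of $\abs{G}$ forbids this, so $C$ must contain a vertex of $\rho(G)\setminus\rho(G/N)$. Running this argument over every minimal normal subgroup, and using Clifford theory together with Gallagher's theorem to track precisely how character degrees (hence edges) are gained or lost between $G$ and its sections, should pin down the relationship between the $p_i$ and the chief structure of $G$, forcing some $p_i$ to arise only through the action on a single chief factor.

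The decisive step, and the one I expect to be the main obstacle, is a purely group-theoretic lemma ruling out the long induced anti-hole directly. The most promising route is the co-bipartite reformulation of the theorem: $\overline{\Delta}(G)$ has no odd cycle if and only if it is bipartite, equivalently $\rho(G)$ is a union of two sets each inducing a complete subgraph of $\Delta(G)$. I would try to construct such a partition intrinsically from the group, for instance by separating primes according to their behaviour relative to a fixed Hall system and the Fitting subgroup $\mathbf{F}(G)$, and then verify via Lemma \ref{PC} applied to suitable sections that any two primes placed in the same class are genuinely joined by an edge. The underlying mechanism is to assign to each non-edge of $\Delta(G)$ a consistent ``type'' and to prove that two non-edges sharing a vertex must be of opposite type; an odd cycle in $\overline{\Delta}(G)$ would then demand an impossible alternating $2$-coloring, yielding the contradiction. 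Establishing this local alternation is exactly where the orbit-counting estimates for coprime linear actions in the spirit of P\'alfy's original argument, applied repeatedly across the chief factors of $G$, will have to carry the weight of the proof.
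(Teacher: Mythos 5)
The paper does not prove this statement at all: Theorem \ref{et} is imported verbatim from \cite{etal} (Akhlaghi et al.), so your proposal is being measured not against an argument in this paper but against the full content of that reference, and it does not yet carry that weight. Your first paragraph is fine as far as it goes: a shortest odd cycle in $\overline{\Delta}(G)$ is indeed chordless, Lemma \ref{PC} rules out length $3$, and in a minimal counterexample the cycle cannot survive into $\overline{\Delta}(G/N)$ for a minimal normal $N$ because $\cd(G/N)\subseteq\cd(G)$ makes $\Delta(G/N)$ a subgraph of $\Delta(G)$ on $\rho(G/N)$. But these are only the standard opening moves; everything after that is a plan rather than a proof, and you say so yourself (``should pin down,'' ``I would try,'' ``will have to carry the weight'').

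More seriously, the one concrete mechanism you do propose cannot work as stated. You suggest assigning each non-edge of $\Delta(G)$ a type so that \emph{two non-edges sharing a vertex must be of opposite type}. That is a proper $2$-coloring of the edges of $\overline{\Delta}(G)$ in which incident edges get different colors, which forces every vertex of $\overline{\Delta}(G)$ to have degree at most $2$, i.e.\ every prime to have at most two non-neighbors in $\Delta(G)$. This is false for realizable degree graphs: by Theorem \ref{KT} of this very paper, $\Gamma_{4,1}$ occurs as $\Delta(G)$ for a solvable group, and there the vertex $b_1$ is non-adjacent to $a_2,a_3,a_4$, so three non-edges pairwise share the vertex $b_1$ and cannot receive pairwise distinct members of a two-element type set. (The complement of $\Gamma_{4,1}$ is a star, hence bipartite, so the theorem holds there --- but not by local alternation.) The correct global reformulation you mention, that $\rho(G)$ must be a union of two cliques of $\Delta(G)$, is genuinely equivalent to the theorem, but producing that clique cover ``intrinsically from a Hall system and $\mathbf{F}(G)$'' is precisely the hard content of \cite{etal}; their argument is a delicate induction on $|G|$ analyzing the characters lying over chief factors, not a local $2$-coloring. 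As it stands your proposal reduces the theorem to an unproved, and in its stated form unprovable, lemma.
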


Next, we recall the construction of the family of graphs from \cite{BissLaub}. Take complete graphs $A$ and $B$, where $A$ has $k$ vertices, $a_1,a_2,\ldots,a_k$, and $B$ has $t$ vertices, $b_1,b_2,\ldots,b_t$. Without loss of generality, say that $k\geq t$. Let $\rho(\Gamma_{k,t})=\rho(A)\cup\rho(B)$. Since $\rho(A)$ and $\rho(B)$ can be chosen as disjoint sets, one gets $|\rho(\Gamma_{k,t})|=k+t$. There is an edge between vertices $p$ and $q$ in $\Gamma_{k,t}$ if any of the following are satisfied:
\begin{enumerate}[(i)]
    \item $p,q\in\rho(A)$,
    \item $p,q\in\rho(B)$, or
    \item $p=a_i\in\rho(A)$ and $q=b_i\in\rho(B)$ for some $1\leq i\leq t$.
\end{enumerate}

\begin{theorem}\label{KT}\emph{(\cite{BissLaub})}
The graph $\Gamma_{k,t}$ occurs as the prime character degree graph of a solvable group precisely when $t=1$ or $k=t=2$. Otherwise $\Gamma_{k,t}$ does not occur as the prime character degree graph of any solvable group.
\end{theorem}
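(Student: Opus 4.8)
The plan is to treat the two directions of the ``precisely when'' separately: first exhibit solvable groups realizing $\Gamma_{k,t}$ in the cases $t=1$ and $k=t=2$, and then show that no solvable group can realize $\Gamma_{k,t}$ when $t\geq 2$ and $(k,t)\neq(2,2)$. A preliminary remark shapes the whole argument: the complement $\overline{\Gamma_{k,t}}$ is bipartite with parts $\rho(A)$ and $\rho(B)$, since $A$ and $B$ are each complete. Consequently $\overline{\Gamma_{k,t}}$ contains no odd cycle, so neither P\'alfy's condition (Lemma \ref{PC}) nor its generalization (Theorem \ref{et}) rules out any member of the family. The elimination must therefore come from finer structural results about solvable groups.

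For the occurrence cases I would argue as follows. When $k=t=2$ the graph $\Gamma_{2,2}$ is a $4$-cycle, which is well known to occur as $\Delta(G)$ for a suitable solvable group, so this case is settled by citing an explicit realization. When $t=1$ the graph $\Gamma_{k,1}$ is a complete graph $K_k$ together with a single pendant vertex $b_1$ attached to $a_1$; here $a_1$ is a universal vertex, and after its removal the graph falls apart into the complete graph on $a_2,\dots,a_k$ and the isolated vertex $b_1$. I would realize this graph by combining a construction for the disconnected graph $K_{k-1}\sqcup K_1$ (governed by the results on disconnected character degree graphs) with an extension that installs $a_1$ as a complete vertex, checking directly that the resulting degree set produces exactly the edges of $\Gamma_{k,1}$.

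The heart of the theorem is the non-occurrence statement, which I would prove by contradiction, taking $G$ to be a counterexample of minimal order with $\Delta(G)=\Gamma_{k,t}$, $t\geq 2$, and $(k,t)\neq(2,2)$. The first key observation is that for $t\geq 2$ the graph $\Gamma_{k,t}$ has \emph{no} universal vertex: every $a_i$ with $i\leq t$ misses some $b_j$, every $a_i$ with $i>t$ misses all of $\rho(B)$, and every $b_j$ misses every $a_i$ with $i\neq j$. Since a normal nonabelian Sylow $p$-subgroup would force $p$ to be adjacent to all other vertices, it follows that $G$ has no normal nonabelian Sylow subgroup. I would then analyze $G$ through its Fitting subgroup $F=F(G)$ and the action of $G/F$ on the chief factors, using this absence together with the structure theorems of \cite{BLL}, \cite{BL}, and \cite{Sass}. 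Passing to quotients $G/N$, whose graphs are induced subgraphs of $\Gamma_{k,t}$, lets minimality pin down the normal structure; the disconnected-graph results apply to the induced subgraphs obtained by deleting an appropriate matched pair, while the diameter-three results constrain how the ``cross'' non-edges $a_ib_j$ ($i\neq j$) can coexist with the two cliques. The goal is to show that sustaining the full matching $\{a_ib_i\}$ alongside all missing cross-edges forces either $t=1$ or the degenerate configuration $k=t=2$.

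The main obstacle, as I see it, is exactly this last structural step: translating the rigid edge pattern of two cliques joined by a partial matching into incompatible arithmetic constraints on the prime divisors of the character degrees. The difficulty is that the family passes all the cheap tests (it is connected, has bipartite complement, and has diameter two), so the contradiction cannot come from a global invariant and must instead be extracted from a delicate analysis of how the primes of $\rho(A)$ and $\rho(B)$ distribute across the orbits of $G/F$ acting on the relevant chief factors. Controlling this orbit structure, and verifying that $k=t=2$ is the unique surviving case with $t\geq 2$, is where the real work lies.
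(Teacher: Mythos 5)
This statement is quoted from \cite{BissLaub}; the present paper gives no proof of it, so your proposal can only be measured against the general method of that line of work (which the paper does reproduce in detail for the graphs $\SL{k}{1}$). Your setup is fine --- the bipartite complement observation is correct, the realizations for $k=t=2$ (a $4$-cycle) and $t=1$ ($K_k$ with a pendant vertex) are the right targets --- but the non-occurrence argument rests on a false claim. You assert that a normal nonabelian Sylow $p$-subgroup ``would force $p$ to be adjacent to all other vertices,'' and deduce that $\Gamma_{k,t}$ with $t\geq 2$ admits no normal nonabelian Sylow subgroup because it has no universal vertex. This is not a theorem: $G=SL(2,3)$ is solvable with $\cd(G)=\{1,2,3\}$, so $\Delta(G)$ has vertices $2$ and $3$ with \emph{no} edge between them, yet the Sylow $2$-subgroup $Q_8$ is normal and nonabelian. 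If the implication you invoke were true, the entire apparatus of admissibility, strong admissibility (Lemma \ref{strong}), Lemma \ref{pi}, and Hypothesis \ref{newhype}/Theorem \ref{newmain} --- which this paper spends Lemmas \ref{lemma<1}--\ref{lemma<3} verifying vertex by vertex just to conclude ``no normal nonabelian Sylow subgroup'' --- would be unnecessary. Ruling out normal nonabelian Sylow subgroups is the hard technical core of these classifications, not a corollary of the adjacency pattern.

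Beyond that, the remainder of your non-occurrence argument is a plan rather than a proof: you describe passing to quotients, invoking the disconnected and diameter-three results, and analyzing orbits of $G/F$ on chief factors, but you explicitly defer the ``delicate analysis'' that would produce the contradiction. In the actual method (visible in Section \ref{lbaseproof} of this paper for the analogous family), that analysis consists of (i) eliminating normal nonabelian Sylow subgroups for every vertex via the admissibility machinery, (ii) showing the Fitting subgroup is minimal normal by a case analysis on which complete subgraph can contain $\rho(G)\setminus(\rho(G/N)\cap\rho(G/M))$, and (iii) applying a result like Lemma \ref{lemma3} to finish. None of these steps is carried out in your proposal, and step (i) as you state it is incorrect. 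One smaller point: $\Delta(G/N)$ need not be an \emph{induced} subgraph of $\Delta(G)$ --- edges between surviving vertices can also be lost --- which matters when you try to apply subgraph non-occurrence results such as Corollary \ref{subKT}.
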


\begin{corollary}\label{subKT}\emph{(\cite{BissLaub})}
Let $k\geq t\geq2$. No connected proper subgraph of $\Gamma_{k,t}$ with the same vertex set is the prime character degree graph of any solvable group.
\end{corollary}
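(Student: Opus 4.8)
The plan is to pass to complements and split on the location of a deleted edge. Recall that $\overline{\Gamma_{k,t}}$ is bipartite with parts $\rho(A)$ and $\rho(B)$: it has no edge inside either part, while $a_i$ and $b_\ell$ are adjacent in $\overline{\Gamma_{k,t}}$ exactly when $a_ib_\ell\notin\Gamma_{k,t}$, i.e. whenever $\ell\ne i$ or $i>t$. If $\Gamma'$ is a connected proper spanning subgraph of $\Gamma_{k,t}$, then $\overline{\Gamma'}\supsetneq\overline{\Gamma_{k,t}}$ acquires at least one new edge, and deleting edges from $\Gamma_{k,t}$ only ever adds edges to the complement. I would then distinguish two cases: either some deleted edge lies inside one of the cliques $A$ or $B$, or every deleted edge is one of the crossing edges $a_ib_i$.

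In the first case, suppose a deleted edge $e$ lies inside a clique. Say $e=a_ia_j$, so that $a_i\sim a_j$ in $\overline{\Gamma'}$. If some index $\ell\in\{1,\dots,t\}$ avoids $\{i,j\}$, then $\{a_i,a_j,b_\ell\}$ is a triangle of $\overline{\Gamma'}$, since $a_ib_\ell,a_jb_\ell\in\overline{\Gamma_{k,t}}\subseteq\overline{\Gamma'}$. Such an $\ell$ fails to exist only when $t=2$ and $\{i,j\}=\{1,2\}$, and in that event, if $k\ge3$ the vertices $a_1,a_2,b_1,a_3,b_2$ form a $5$-cycle in $\overline{\Gamma'}$. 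If instead $e=b_ib_j$, then because the opposite clique $A$ has $k\ge t$ vertices, a triangle $\{b_i,b_j,a_\ell\}$ with $\ell\in\{1,\dots,k\}\setminus\{i,j\}$ exists as soon as $k\ge3$. In every such instance $\overline{\Gamma'}$ contains an odd cycle, so Theorem \ref{et} rules out $\Gamma'$. The only configuration this argument leaves untreated is $(k,t)=(2,2)$ with a clique edge removed, where $\Gamma'$ is a path on four vertices ($P_4$).

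In the second case, every deleted edge is a crossing edge $a_ib_i$, so $A$ and $B$ remain complete and $\overline{\Gamma'}$ stays bipartite with parts $\rho(A),\rho(B)$; here Theorem \ref{et} and P\'alfy's Lemma \ref{PC} give no information, as a bipartite graph has no odd cycle. Connectivity forces at least one crossing edge to survive and properness forces at least one to be deleted, so after relabeling $\Gamma'$ is the union of the two cliques joined by a partial matching of size $s$ with $1\le s<t$. I would then verify that $\Gamma'$ has diameter exactly three: an unmatched vertex $b_m$ with $m>s$ reaches an unmatched vertex $a_n$ with $n>s$ (which exists since $k\ge t>s$) only through a surviving matching edge, giving $d(a_n,b_m)=3$, and no pair is farther apart. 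The leftover graph $P_4$ from the first case is likewise connected of diameter three. To finish I would invoke the structural classification of diameter-three prime character degree graphs found in \cite{Sass} and \cite{BLL} to exclude both the partial-matching graphs and $P_4$.

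I expect the diameter-three step of the second case to be the main obstacle. The complement/cycle machinery is powerless once both cliques stay complete, so the exclusion of the partial-matching graphs for all $k\ge t>s\ge1$ (including large $k$, where no bound on the number of vertices is available) must come from the finer diameter-three structure theory rather than from P\'alfy-type conditions. Checking that every such partial-matching graph, together with $P_4$, genuinely violates the constraints of \cite{Sass} and \cite{BLL} is the crux of the argument.
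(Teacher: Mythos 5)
First, a caveat: the paper does not prove this corollary itself---it is imported from \cite{BissLaub}---so there is no in-paper proof to compare against, and your proposal must be judged on its own terms and against the toolkit the paper actually uses.

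Your first case is correct and complete: if any deleted edge lies inside $A$ or $B$, the complement acquires a triangle (or, in the two exceptional configurations, a $5$-cycle or the graph degenerates to $P_4$), and Theorem \ref{et} applies; the bookkeeping of the exceptional indices is right, and $P_4$ is killed by Proposition \ref{sassresult}\eqref{sass1} for every choice of basepoint. The genuine gap is exactly where you predicted it, in the partial-matching case, and it is more serious than ``checking the details.'' Proposition \ref{sassresult} is an existential statement about the partition: it asserts that \emph{some} admissible arrangement of $\rho_1,\dots,\rho_4$ satisfies \eqref{sass1}--\eqref{sass4}, not that every distance-based partition does. (The paper's own usage confirms this reading: in case (c) of Lemma \ref{lemma<3} the authors write that \eqref{sass2} ``forces'' $\rho_3$ to be a particular set, i.e.\ they use the numerical conditions to identify which candidate partition is the canonical one, rather than deriving a contradiction from an arbitrary one.) For the two cliques of sizes $k$ and $t$ joined by a matching of size $s$, there are two essentially different basepoints. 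Taking $p$ to be an unmatched vertex of $A$ gives $|\rho_1\cup\rho_2|=k$ and $|\rho_3\cup\rho_4|=t\leq k<2^k$, which always violates \eqref{sass3}---this is presumably the computation you had in mind. But taking $p$ to be an unmatched vertex of $B$ gives $|\rho_1\cup\rho_2|=t$, $|\rho_3\cup\rho_4|=k$, and $|\rho_3|=s$, and conditions \eqref{sass1}--\eqref{sass3} are then all \emph{satisfiable} whenever $s\geq3$ and $k\geq 2^t$ (e.g.\ $k=16$, $t=4$, $s=3$). So the numerical conditions of \cite{Sass} alone do not exclude these graphs.

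To close this, one must use \eqref{sass4}: the surviving partition forces a normal Sylow subgroup for exactly one prime among the matched vertices of $A$, and one then has to derive a contradiction from that---via Lemma \ref{lewisgraph} and an iteration/induction, or via the admissibility machinery---which is real work and is essentially the content of the argument in \cite{BissLaub}. A smaller point: \cite{BLL} is about excluding normal nonabelian Sylow subgroups (Hypothesis \ref{newhype}), not about diameter-three structure, so it is not the reference that rescues this step. In short, the decomposition into ``deleted clique edge'' versus ``deleted matching edge'' is the right one and the first half is airtight, but the second half, which you correctly identify as the crux, is not yet a proof.
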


Finally, there are tools from \cite{BL} that help us determine if a graph (that satisfies P\'alfy's condition) does in fact not occur as $\Delta(G)$. This result is below:

\begin{lemma}\label{lemma3}\emph{(\cite{BL})}
Let $\Gamma$ be a graph satisfying P\'alfy's condition with $n\geq5$ vertices. Also, assume there exist distinct vertices $a$ and $b$ of $\Gamma$ such that $a$ is adjacent to an admissible vertex $c$, $b$ is not adjacent to $c$, and $a$ is not adjacent to an admissible vertex $d$.

Let $G$ be a solvable group and suppose for all proper normal subgroups $N$ of $G$ we have that $\Delta(N)$ and $\Delta(G/N)$ are proper subgraphs of $\Gamma$. Let $F$ be the Fitting subgroup of $G$ and suppose that $F$ in minimal normal in $G$. Then $\Gamma$ is not the prime character degree graph of any solvable group.
\end{lemma}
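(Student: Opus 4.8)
The plan is to argue by contradiction, converting the adjacency data among $a,b,c,d$ into an impossible edge configuration by means of the Fitting subgroup together with Clifford theory. So I would assume that $\Gamma$ does occur and that $G$ realizes it, i.e. $\Delta(G)=\Gamma$; the proper-subgraph hypotheses are exactly what one arranges to hold for a minimal such $G$. Because $G$ is solvable and $F = F(G)$ is minimal normal, $F$ is an elementary abelian $p$-group for some prime $p$, and minimality makes $F$ an irreducible $\mathbb{F}_p[G/F]$-module. Since $F$ is the Fitting subgroup we have $C_G(F) = F$, so $H := G/F$ acts faithfully on $F$ and, dually, faithfully and irreducibly on the group of linear characters $\Irr(F)$.

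Next I would split $\Irr(G)$ according to the $H$-orbit of the constituent lying beneath each character on restriction to $F$. The characters lying over the trivial character $1_F$ are inflated from $H$, so they contribute exactly $\cd(G/F)$; every character lying over a nontrivial $\lambda \in \Irr(F)$ has degree divisible by the orbit length $[H : H_\lambda]$, where $H_\lambda$ denotes the stabilizer of $\lambda$ in $H$. Applying the proper-subgraph hypothesis with $N = F$ shows that $\Delta(G/F) = \Delta(H)$ is a proper subgraph of $\Gamma$, so at least one edge of $\Gamma$ must be produced by degrees lying over a nontrivial $\lambda$; such a degree forces the primes dividing $[H:H_\lambda]$, together with those of the relative degree, into a common clique of $\Gamma$.

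The heart of the argument is to play the adjacencies $a \sim c$, $b \not\sim c$, and $a \not\sim d$ against this orbit structure. I would first argue that the edge $ac$ cannot be supplied purely by $\Delta(H)$: were it, the admissibility of $c$ would let me realize the surviving configuration inside a proper normal subgroup or quotient of $G$, whose degree graph is a proper subgraph of $\Gamma$ by hypothesis, and the same $a,b,c,d$ pattern would descend, contradicting the minimality packaged into the proper-subgraph condition. Hence $ac$ is carried by a degree lying over a nontrivial $\lambda$, placing $a$ and $c$ in an orbit-governed clique. I would then show that the same orbit data pins the locations of $b$ and of the admissible vertex $d$, and that the constraints $b \not\sim c$ and $a \not\sim d$ cannot both persist: either a forced edge violates one of these non-adjacencies, or the four vertices close into an odd cycle in $\overline{\Delta}(G)$, contradicting Theorem \ref{et} (equivalently, P\'alfy's condition, Lemma \ref{PC}).

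The step I expect to be the main obstacle is the middle of this last paragraph, where the defining property of an admissible vertex must be made to do real work: I have to certify that the edges incident to $c$ and $d$ are genuinely realized inside a proper normal subgroup or quotient, so that the proper-subgraph hypothesis on $G$ can be invoked to collapse the configuration. Running alongside this is the bookkeeping dichotomy of whether the Fitting prime $p$ is itself a vertex of $\Gamma$ and which degrees are divisible by $p$ versus by the orbit length $[H:H_\lambda]$; keeping these straight while applying P\'alfy's condition at each juncture is where the argument is most delicate.
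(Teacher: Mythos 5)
This lemma is not proved in the paper at all: it is imported verbatim from \cite{BL}, so there is no in-paper argument to compare against. Judged on its own terms, your proposal is a strategy outline rather than a proof, and the two load-bearing steps are missing. First, your claim that the edge between $a$ and $c$ ``cannot be supplied purely by $\Delta(H)$'' does not follow from anything you have established. Admissibility of $c$ is a statement about which \emph{subgraphs of $\Gamma$} can occur as degree graphs of solvable groups; its operative group-theoretic consequence (the lemma quoted in the paper) is that $O^c(G)=G$, not a restriction on which normal sections of $G$ realize a given edge. The hypothesis only says $\Delta(G/F)$ is a \emph{proper} subgraph of $\Gamma$; a proper subgraph may perfectly well contain the edge $ac$, so there is no contradiction to extract at this point, and the phrase ``the same $a,b,c,d$ pattern would descend'' is not a deduction.

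Second, the concluding dichotomy --- ``either a forced edge violates one of these non-adjacencies, or the four vertices close into an odd cycle in $\overline{\Delta}(G)$'' --- is asserted, not derived. The hypotheses give you exactly one adjacency ($a\sim c$) and two non-adjacencies ($b\not\sim c$, $a\not\sim d$) among $a,b,c,d$; no odd cycle in the complement is determined by these data, and nothing in your orbit bookkeeping forces an edge between any specific pair such as $b$ and $c$ or $a$ and $d$. You acknowledge this yourself by labeling the middle of that paragraph ``the main obstacle.'' The actual argument in \cite{BL} has to combine $O^c(G)=O^d(G)=G$ with the decomposition $G=HF$, $H\cap F=1$, $C_G(F)=F$ and a careful analysis of stabilizers of nontrivial $\lambda\in\Irr(F)$ to manufacture a character degree that is divisible by a forbidden pair of primes; that analysis is precisely the content of the lemma and is absent from your sketch. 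As written, the proposal sets up the correct ambient machinery (elementary abelian $F$, faithful irreducible action of $H$, Clifford orbit decomposition) but does not contain a proof.
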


\subsection{Disconnected graphs}

It is easy to verify that if a disconnected graph does occur as the prime character degree graph of a solvable group, then there must be exactly two components, and each component must be complete. This is due to P\'alfy's condition. Moreover, in \cite{L2}, Lewis completely classifies the disconnected graphs with two connected components. One of the most powerful tools, however, is another result by P\'alfy:

\begin{theorem}[P\'alfy's inequality from \cite{P2}]
Let $G$ be a solvable group and $\Delta(G)$ its prime character degree graph. Suppose that $\Delta(G)$ is disconnected with two components having size $a$ and $b$, where $a\leq b$. Then $b\geq2^a-1$.
\end{theorem}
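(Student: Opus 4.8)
The plan is to combine the structural theory of solvable groups with disconnected character degree graph with a self-contained combinatorial estimate. By the remarks preceding the statement together with P\'alfy's condition (Lemma~\ref{PC}), the graph $\Delta(G)$ has exactly two complete components; write $\pi_1$ and $\pi_2$ for their vertex sets, with $|\pi_1|=a\leq b=|\pi_2|$. First I would use the structure theory underlying \cite{P2} to reduce $G$, after passing to a suitable normal section and quotient, to a group of Frobenius or $2$-Frobenius type. The upshot of this reduction is that a Hall $\pi_1$-subgroup $H$ acts faithfully and coprimely on a $\pi_2$-group $M$, with $\pi(H)=\pi_1$ and $\pi(M)=\pi_2$. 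The key structural input is that the absence of any edge between the two cliques forces the irreducible character degrees to split cleanly: those coming from $H$ are pure $\pi_1$-numbers and those coming from the action on $M$ are pure $\pi_2$-numbers. This is precisely what realizes the two disjoint complete components and what constrains the module structure in the next step.

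Next I would analyze the action. Replacing $M$ by $M/\Phi(M)$, we may assume $M$ is a direct product $M=\prod_j M_j$ of elementary abelian $H$-invariant Sylow subgroups, one for each prime $q_j\in\pi_2$. For each $j$ set $C_j=C_H(M_j)$, the kernel of the action of $H$ on $M_j$. Faithfulness of the action gives $\bigcap_j C_j=1$, and the no-mixed-edge condition constrains which subgroups of $H$ can occur as the $C_j$: each is forced to be a Hall subgroup of $H$ whose prime set is a proper subset of $\pi_1$, so that the set of primes acting nontrivially on $M_j$ is a well-defined nonempty subset $T_j:=\pi_1\setminus\pi(C_j)\subseteq\pi_1$. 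The aim is to make precise this dictionary $q_j\mapsto T_j$ from the $\pi_2$-primes to nonempty subsets of $\pi_1$, and to show it controls the count.

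The heart of the argument, and the step I expect to be the main obstacle, is the combinatorial claim that this dictionary must realize \emph{every} nonempty subset of $\pi_1$, whence $b\geq 2^a-1$. The intended mechanism is that if some nonempty $T\subseteq\pi_1$ were not of the form $T_j$, then the Hall $T$-subgroup $H_T$ would act trivially on each chief factor it could affect, and an inclusion–exclusion bookkeeping over the lattice of subsets of $\pi_1$ would then contradict either faithfulness ($\bigcap_j C_j=1$) or the completeness of the $\pi_1$-clique, which demands degrees divisible by every pair of $\pi_1$-primes and hence chief factors moved jointly by those primes. The delicate point is ruling out collisions and confirming that each nonempty subset genuinely requires its \emph{own} $\pi_2$-prime rather than being absorbed by a larger one; faithfulness alone is far too weak to yield exponential growth, so it is here that the coprimality and the precise degree-separation dictated by disconnectedness must be deployed in full, and getting the counting lossless is what secures the sharp exponent rather than a weaker polynomial bound.

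Finally, to confirm that the estimate is best possible and that the counting is not lossy, I would exhibit the extremal configuration in which $H$ is a product of $a$ cyclic groups of distinct prime orders acting on a module assembled from exactly $2^a-1$ primes, one for each nonempty subset of $\pi_1$, recovering $b=2^a-1$. This also dovetails with Lewis's classification of two-component graphs in \cite{L2} and with the odd-cycle obstruction of Theorem~\ref{et}, providing an independent consistency check on the bound.
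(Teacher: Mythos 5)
The paper does not prove this statement at all: it is quoted verbatim as a known theorem from \cite{P2}, so there is no internal proof to compare against. Judged on its own merits, your proposal identifies the right general territory (a coprime, faithful action arising from the disconnectedness, followed by a counting argument indexed by subsets of the small component), but it has two genuine gaps.

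First, the structural reduction is not correct as stated. You take the normal section $M$ to be a $\pi_2$-group with one $H$-invariant Sylow subgroup $M_j$ for each prime $q_j\in\pi_2$, and build the dictionary $q_j\mapsto T_j$ from the centralizers $C_H(M_j)$. But by the very result the paper quotes immediately afterward (Theorem 5.5 of \cite{L2}), exactly \emph{one} prime has a non-central Sylow subgroup inside the Fitting subgroup; in the critical configuration the module being acted on is an elementary abelian $q$-group for a \emph{single} prime $q$, and the remaining primes of $\pi_2$ enter only as divisors of orbit sizes $|H:H_\lambda|$ on $\Irr(V)$, i.e.\ of character degrees, not as divisors of $|F(G)|$. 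So the family $\{M_j\}_{q_j\in\pi_2}$ and hence the map $q_j\mapsto T_j$ do not exist in the form you need. Second, even granting your setup, the step you yourself flag as the heart of the argument --- that every nonempty subset of $\pi_1$ is realized, giving $2^a-1$ distinct $\pi_2$-primes --- is asserted rather than proved. Faithfulness gives only $\bigcap_j C_j=1$, which forces the $T_j$ to \emph{cover} $\pi_1$; a cover needs as few as $a$ sets, so this is exponentially far from the bound, and the proposed ``inclusion--exclusion bookkeeping'' is not an argument. The mechanism that actually secures the exponent in \cite{P2} is number-theoretic: one reduces to $H$ embedded in a semilinear group $\Gamma(q^n)$ with $n$ divisible by every prime of $\pi_1$, and produces, for each of the at least $2^a-1$ divisors $d>1$ of $n$, a distinct (primitive) prime divisor of $q^d-1$ lying in $\pi_2$. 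That input, or something playing its role, is missing from your sketch, so the proof is incomplete at its decisive step.
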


Moreover, we will occasionally use the following result by Lewis whenever a disconnected subgraph arises.

\begin{theorem}[Theorem 5.5 from \cite{L2}]
Let $G$ be a solvable group and suppose that $\Delta(G)$ has two connected components. Then there is precisely one prime $p$ so that the Sylow $p$-subgroup of the Fitting subgroup of $G$ is not central in $G$.
\end{theorem}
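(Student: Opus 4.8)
The plan is to work with the Fitting subgroup $F = F(G)$ directly. Since $F$ is nilpotent, it is the direct product $F = \prod_r P_r$ of its Sylow subgroups $P_r$, each of which is characteristic in $F$ and hence normal in $G$. The assertion to be proved is then precisely that exactly one of the $P_r$ fails to be central in $G$, so I would split the argument into an \emph{existence} part (at least one non-central $P_r$) and a \emph{uniqueness} part (no two non-central $P_r$).

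For existence, I would first note that a disconnected $\Delta(G)$ forces $G$ to be non-nilpotent. Indeed, if $G$ were nilpotent it would be the direct product of its Sylow subgroups, so any two distinct primes in $\rho(G)$ would be joined by an edge: pick a degree that is a product of two nontrivial prime-power degrees coming from two distinct nonabelian Sylow factors. This makes $\Delta(G)$ complete, hence connected, a contradiction. I would then invoke the standard fact that for a solvable group $C_G(F) \leq F$. If every $P_r$ were central we would have $F \leq Z(G)$; since always $Z(G) \leq F$, this gives $F = Z(G)$ and hence $C_G(F) = G$, so $G = C_G(F) \leq F$ and $G = F$ is nilpotent, contradicting the previous paragraph. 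Thus at least one Sylow subgroup of $F$ is non-central.

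The substantial part is uniqueness, and here the main obstacle is that a non-central $P_p$ need not even contribute a vertex to $\Delta(G)$: a group can act nontrivially on a normal abelian Sylow subgroup without that prime dividing any character degree (as already happens in Frobenius groups with abelian kernel). Hence one cannot argue by simply reading off vertices. The route I would take is to pass through the structural description of solvable groups whose degree graph has two connected components. By that theory such a $G$ is forced into one of a short list of shapes, essentially Frobenius-type or $2$-Frobenius-type, in which a single ``kernel prime'' $p$ singles out the unique non-central Sylow subgroup $P_p$ of $F$, while the remaining Sylow subgroups of $F$ lie in $Z(G)$. To make this self-contained I would suppose for contradiction that two distinct primes $p \ne q$ both had non-central normal Sylow subgroups $P_p, P_q \triangleleft G$, and then track, via Clifford theory over characters of $P_p$ and of $P_q$ that are not $G$-invariant, the primes appearing in the resulting induced irreducible degrees; the aim is to show these degrees produce an edge bridging the two components, contradicting disconnectedness. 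The heart of the matter, and the step I expect to be hardest, is exactly this: ruling out two independent non-central normal Sylow subgroups of $F$ against the rigid two-block pattern that disconnectedness imposes on $\cd(G)$, for which P\'alfy's inequality and the completeness of each component are the natural tools to close the argument.
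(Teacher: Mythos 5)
This statement is quoted in the paper as Theorem 5.5 of \cite{L2}; the paper gives no proof of it, so there is no internal argument to compare yours against. Judged on its own terms, your proposal is half a proof. The existence part is correct and complete: nilpotency of $G$ would make $\Delta(G)$ complete (hence connected), and the standard facts $Z(G)\leq F$ and $C_G(F)\leq F$ for solvable $G$ show that if every Sylow subgroup of $F$ were central then $G=F$ would be nilpotent. That much is fine.

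The uniqueness part, which you yourself identify as ``the substantial part,'' is not actually carried out; it is a statement of intent. You correctly flag the central obstacle --- a non-central Sylow subgroup $P_p$ of $F$ need not put $p$ into $\rho(G)$ or contribute any visible edge, as the Frobenius-kernel example shows --- but you never overcome it. The Clifford-theoretic plan (take non-invariant characters of $P_p$ and $P_q$ and ``track the primes appearing in the resulting induced irreducible degrees'' to manufacture a bridging edge) is precisely the step that needs an argument, and it is far from clear it closes: the primes showing up in those induced degrees come from indices of inertia subgroups, not from $p$ and $q$ themselves, so there is no a priori reason the two actions produce degrees lying in different components, let alone a single degree spanning both. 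Your alternative of invoking ``the structural description of solvable groups whose degree graph has two connected components'' is essentially invoking the main theorem of \cite{L2} itself --- which is indeed how this result is obtained there, by checking the claim against the explicit list of group shapes --- but then the proof reduces to a case analysis over that classification which you do not perform. As written, the uniqueness half is a gap, not a proof.
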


\subsection{Normal nonabelian Sylow subgroups}

One of the most useful tools in \cite{BL} was to classify vertices in a graph as admissible. This had far-reaching consequences, many of which will not be discussed in this paper.

\begin{definition}(\cite{BL})
A vertex $p$ of a graph $\Gamma$ is \textbf{admissible} if:
\begin{enumerate}[(i)]
    \item the subgraph of $\Gamma$  obtained by removing $p$ and all edges incident to $p$ does not occur as the prime character degree graph of any solvable group, and
    \item none of the subgraphs of $\Gamma$ obtained by removing one or more of the edges incident to $p$ occur as the prime character degree graph of any solvable group.
\end{enumerate}
\end{definition}

\begin{lemma}\emph{(\cite{BL})}
Let $G$ be a solvable group, and suppose $p$ is an admissible vertex of $\Delta(G)$. For every proper normal subgroup $N$ of $G$, suppose that $\Delta(N)$ is a proper subgraph of $\Delta(G)$. Then $O^p(G)=G$.
\end{lemma}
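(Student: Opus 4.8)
The plan is to argue by contradiction. Suppose $O^p(G)\neq G$ and set $M=O^p(G)$, so that $M$ is a proper normal subgroup of $G$ and $G/M$ is a nontrivial $p$-group. Since $M$ is itself solvable, $\Delta(M)$ genuinely occurs as the prime character degree graph of a solvable group, and by the standing hypothesis $\Delta(M)$ is a \emph{proper} subgraph of $\Delta(G)$. The whole point will be to show that this forces $\Delta(M)$ to be one of the graphs that the definition of admissibility forbids from occurring as a $\Delta$, contradicting that $p$ is admissible in $\Delta(G)$.

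The technical heart is to compare $\Delta(M)$ with $\Delta(G)$ across the $p$-quotient $G/M$ using Clifford theory. For $\chi\in\Irr(G)$ and an irreducible constituent $\theta$ of the restriction $\chi_M$, I would invoke the standard fact that $\chi(1)/\theta(1)$ divides $[G:M]$, which here is a power of $p$. The main obstacle is simply pinning down this divisibility cleanly; once it is in hand, the rest is bookkeeping. It follows that for every prime $q\neq p$ we have $q\mid\chi(1)$ if and only if $q\mid\theta(1)$, and since every $\theta\in\Irr(M)$ lies under some $\chi\in\Irr(G)$ (Frobenius reciprocity), we get $\rho(G)\setminus\{p\}=\rho(M)\setminus\{p\}$ with the induced subgraphs on these common vertices coinciding. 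Moreover, any edge $pq$ present in $\Delta(M)$ lifts to an edge of $\Delta(G)$ (if $pq\mid\theta(1)$, choose $\chi$ over $\theta$, so $pq\mid\theta(1)\mid\chi(1)$). Thus $\Delta(M)$ is obtained from $\Delta(G)$ by possibly deleting the vertex $p$, or by possibly deleting some edges incident to $p$, and by nothing else.

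The argument then splits according to whether $p\in\rho(M)$. If $p\notin\rho(M)$, then $\Delta(M)$ is exactly the graph obtained from $\Delta(G)$ by removing the vertex $p$ and all of its incident edges; as $\Delta(M)$ does occur as the prime character degree graph of the solvable group $M$, this contradicts condition (i) of admissibility. If instead $p\in\rho(M)$, then $\rho(M)=\rho(G)$, so $\Delta(M)$ shares the vertex set of $\Delta(G)$ and agrees with it on all edges not incident to $p$, differing only by deleting some (possibly empty) set of edges at $p$. Because $\Delta(M)$ is a proper subgraph of $\Delta(G)$ and every non-$p$ edge already agrees, at least one edge incident to $p$ must have been removed, so $\Delta(M)$ is one of the graphs forbidden by condition (ii) of admissibility. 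In either case we contradict the admissibility of $p$, and therefore $O^p(G)=G$.
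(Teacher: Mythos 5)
Your proof is correct. The paper does not actually prove this lemma---it is quoted from \cite{BL}---but your argument is essentially the one given there: with $M=O^p(G)$ proper normal of $p$-power index, the Clifford-theoretic divisibility $\chi(1)/\theta(1)\mid |G:M|$ forces $\Delta(M)$ to agree with $\Delta(G)$ away from $p$, so $\Delta(M)$ is a graph forbidden by condition (i) or (ii) of admissibility according to whether $p\in\rho(M)$, yet it occurs for the solvable group $M$.
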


A refinement was made in \cite{BL} to the definition of admissible.

\begin{definition}(\cite{BL})
A vertex $p$ of a graph $\Gamma$ is \textbf{strongly admissible} if:
\begin{enumerate}[(i)]
    \item $p$ is admissible, and
    \item none of the subgraphs of $\Gamma$ obtained by removing $p$, the edges incident to $p$, and one or more of the edges between two adjacent vertices of $p$ occurs as $\Delta(G)$ for some solvable group $G$.
\end{enumerate}
\end{definition}

\begin{lemma}\label{strong}\emph{(\cite{BL})}
Let $G$ be a solvable group and assume that $p$ is a prime whose vertex is a strongly admissible vertex of $\Delta(G)$. For every proper normal subgroup $N$ of $G$, suppose that $\Delta(G/N)$ is a proper subgraph of $\Delta(G)$. Then a Sylow $p$-subgroup of $G$ is not normal.
\end{lemma}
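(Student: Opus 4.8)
The plan is to argue by contradiction: assume that some Sylow $p$-subgroup $P$ of $G$ is normal, and show that this forces a subgraph barred by the strong admissibility of $p$ to actually occur as a prime character degree graph. First I would record that $P$ is a legitimate proper, nontrivial, normal subgroup of $G$. Since $p\in\rho(G)$ we have $p\mid|G|$, so $P\neq1$; and admissibility condition (i) applied to $p$ forces $|\rho(G)|\geq2$ (otherwise deleting $p$ would leave the empty graph, which does occur, as $\Delta$ of any abelian group), so $G$ is not a $p$-group and $P\neq G$. Taking $N:=P$, the minimality hypothesis gives that $\Delta(G/P)$ is a proper subgraph of $\Delta(G)$; crucially, $\Delta(G/P)$ genuinely occurs as a prime character degree graph, namely that of the solvable group $G/P$. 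Because $G/P$ has order coprime to $p$, the vertex $p$ is absent from $\Delta(G/P)$, so $\Delta(G/P)$ sits inside the graph obtained from $\Delta(G)$ by deleting $p$ and its incident edges.

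The heart of the argument is to pin down $\Delta(G/P)$ precisely. Writing $G=P\rtimes H$ by Schur--Zassenhaus, we have $G/P\cong H$ and $\cd(G/P)=\cd(H)$. Note first that $P$ must be nonabelian: were $P$ abelian, It\^o's theorem would force every member of $\cd(G)$ to be coprime to $p$, contradicting $p\in\rho(G)$. The goal is then to show that $\Delta(G/P)$ is obtained from $\Delta(G)$ by removing \emph{exactly} the vertex $p$ together with its incident edges, and possibly some edges joining two neighbors of $p$ --- in other words, that $\Delta(G/P)$ is one of the graphs declared non-occurring by admissibility condition (i) or strong admissibility condition (ii). To do this I would run Clifford theory over the normal subgroup $P$: a character $\chi\in\Irr(G)$ of degree coprime to $p$ must lie over a \emph{linear} character of $P$, since each $P$-constituent of $\chi$ has $p$-power degree dividing $\chi(1)$. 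Using Gallagher's theorem and inflation from $G/P$ to transfer such degrees downward, and running this for each $p'$-vertex $q$ and each edge $qr$ with $q,r\neq p$, one should see that every $p'$-vertex of $\Delta(G)$ survives in $\Delta(G/P)$ and that every edge between two $p'$-vertices survives, with the sole possible exception of edges both of whose endpoints are adjacent to $p$ (these can vanish precisely because they may only be realized by degrees divisible by $p$, which do not descend to $G/P$).

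Once this identification is in place the contradiction is immediate: $\Delta(G/P)$ occurs as the prime character degree graph of $G/P$, yet it coincides with a graph that the strong admissibility of $p$ declares cannot occur for any solvable group. Hence no Sylow $p$-subgroup of $G$ is normal. The main obstacle is exactly the structural step of the previous paragraph, namely showing that passing from $G$ to $G/P$ loses no $p'$-vertex and loses no edge other than those between two neighbors of $p$. This is delicate because for a \emph{general} normal subgroup both vertices and ``cross'' edges can disappear in a quotient; it is the normality of the \emph{full} Sylow $p$-subgroup --- so that $G/P$ is an honest $p$-complement and Clifford theory over $P$ is governed entirely by $p$-power data --- together with the minimality hypothesis on quotients that must be leveraged to control precisely which edges are permitted to vanish, matching them exactly to the edges allowed to be deleted in strong admissibility condition (ii).
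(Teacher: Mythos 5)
Your overall strategy --- assume a normal Sylow $p$-subgroup $P$, note $P$ must be nonabelian via It\^o, and derive a contradiction by exhibiting a quotient whose degree graph is one of the graphs forbidden by strong admissibility --- is the standard one (the paper cites this lemma from \cite{BL} without reproducing the proof, and the argument there runs along exactly these lines). Your preliminary reductions are also correct: $P\neq 1$ since $p\in\rho(G)$, $P\neq G$ since admissibility condition (i) forces $|\rho(G)|\geq 2$, and $P$ nonabelian since otherwise It\^o would give $p\notin\rho(G)$.

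The gap is your choice of quotient. You pass to $G/P$, and the crucial claim that $\Delta(G/P)$ retains every vertex of $\rho(G)\setminus\{p\}$ and loses no edges beyond those permitted by conditions (i)--(ii) does not hold for this quotient, nor does your Clifford--Gallagher sketch establish it. A character $\chi\in\Irr(G)$ with $p\nmid\chi(1)$ does lie over a linear character $\lambda$ of $P$, but $\lambda$ need not be trivial, so $\chi$ need not contain $P$ in its kernel and need not descend to $G/P$. Consequently a vertex $q\neq p$ can disappear entirely from $\rho(G/P)$ (if every character of degree divisible by $q$ is nontrivial on $P$), in which case $\Delta(G/P)$ is \emph{not} one of the graphs outlawed by strong admissibility and no contradiction results. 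The repair is to quotient by $P'$ instead: since $P$ is nonabelian, $1<P'$ is a proper normal subgroup of $G$; every $\chi$ of $p'$-degree lies over linear characters of $P$, so its restriction to $P$ kills $P'$ and hence $P'\leq\ker\chi$, meaning all $p'$-degrees of $G$ survive in $\cd(G/P')$; and Lemma \ref{lewisgraph} gives $\rho(G/P')=\rho(G)\setminus\{p\}$ exactly. Any edge between $q,r\neq p$ that is lost in $\Delta(G/P')$ must have had all of its witnessing degrees divisible by $p$, forcing both $q$ and $r$ to be adjacent to $p$ in $\Delta(G)$ --- so $\Delta(G/P')$ is precisely one of the graphs excluded by admissibility (i) or strong admissibility (ii), yet it occurs as the prime character degree graph of the solvable group $G/P'$. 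With this substitution your argument closes.
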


This is the kind of conclusion that we desire: no normal nonabelian Sylow subgroup. The following lemma gives an alternative (yet distinct) method to reach this result.

\begin{lemma}\label{pi}\emph{(\cite{BL})}
Let $\Gamma$ be a graph satisfying P\'alfy's condition. Let $q$ be a vertex of $\Gamma$, and denote $\pi$ to be the set of vertices of $\Gamma$ adjacent to $q$, and $\rho$ to be the set of vertices of $\Gamma$ not adjacent to $q$. Assume that $\pi$ is the disjoint union of nonempty sets $\pi_1$ and $\pi_2$, and assume that no vertex in $\pi_1$ is adjacent in $\Gamma$ to any vertex in $\pi_2$. Let $v$ be a vertex in $\pi_2$ adjacent to an admissible vertex $s$ in $\rho$. Furthermore, assume there exists another vertex $w$ in $\rho$ that is not adjacent to $v$.

Let $G$ be a solvable group such that $\Delta(G)=\Gamma$, and assume that for every proper normal subgroup $N$ of $G$, $\Delta(N)$ is a proper subgraph of $\Delta(G)$. Then a Sylow $q$-subgroup of $G$ for the prime associated to $q$ is not normal.
\end{lemma}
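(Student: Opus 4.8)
The plan is to argue by contradiction: assume that a Sylow $q$-subgroup $Q$ of $G$ is normal, and derive a contradiction with the hypothesis that $\Delta(N)$ is a proper subgraph of $\Delta(G)=\Gamma$ for every proper normal subgroup $N$ of $G$. First I would record two immediate consequences of the set-up. Since $q\in\rho(G)$, the prime $q$ divides some character degree, so by the It\^o--Michler theorem the normal Sylow subgroup $Q$ must be nonabelian; and since $s$ is an admissible vertex while every proper normal subgroup yields a proper subgraph, the admissibility lemma quoted above gives $O^{s}(G)=G$. This last fact is the engine of the argument: it forbids the ``easy'' escape in which $s$ fails to appear after passing to the relevant quotient.

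Next I would bring in Clifford theory relative to the normal subgroup $Q$. Writing $G=Q\rtimes H$ by Schur--Zassenhaus and taking $\chi\in\Irr(G)$ lying over $\theta\in\Irr(Q)$, the degree $\chi(1)$ factors as a $q$-power $\theta(1)$ times a $q'$-number arising from the inertia factor $I_{G}(\theta)/Q$ (a section of the $q'$-group $H$). Hence a prime $p$ is adjacent to $q$ precisely when some \emph{nonlinear} $\theta$ contributes a $q'$-part divisible by $p$. The hypothesis that $\pi$ splits as $\pi_{1}\sqcup\pi_{2}$ with no edges across the split should then be read off at the level of these inertia data: the characters responsible for $\pi_{1}$-adjacencies and those responsible for $\pi_{2}$-adjacencies cannot interact, which I would use to isolate the subgroup governing the $\pi_{2}$-side containing $v$.

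With this separation in hand, I would use the distinguished vertices to manufacture a proper normal subgroup $N$ whose degree graph recovers all of $\Gamma$, contradicting minimality. Concretely, the edge from $v\in\pi_{2}$ to the admissible vertex $s\in\rho$, together with the existence of $w\in\rho$ that is \emph{not} adjacent to $v$, is exactly the configuration needed to show that the $\pi_{2}$-component survives intact in an appropriate inertia subgroup or kernel, while $O^{s}(G)=G$ prevents $s$ (and hence the edge $vs$) from being lost in the passage to $N$. Any attempt to avoid this should force either a violation of P\'alfy's condition among a $\{q,v,w\}$-type triple or a disconnected subgraph, to which P\'alfy's inequality $b\geq 2^{a}-1$ and Lewis's component theorem apply, yielding a numerical contradiction.

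The hard part will be the middle step: faithfully translating the purely graph-theoretic separation ($\pi_{1}\sqcup\pi_{2}$ with no cross-edges, together with the precise roles of $v$, $s$, and $w$) into control over the inertia groups $I_{G}(\theta)$, and in particular showing that the $q'$-contributions feeding $\pi_{1}$ and $\pi_{2}$ cannot coexist in a single quotient once $Q$ is assumed normal. Pinning down the witnessing normal subgroup $N$ with $\Delta(N)=\Gamma$---rather than a merely large subgraph---is the delicate point, and it is there that the admissibility hypothesis on $s$ (via $O^{s}(G)=G$) must be used in an essential, not merely cosmetic, way.
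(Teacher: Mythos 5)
This lemma is imported verbatim from \cite{BL}; the present paper gives no proof of it, so there is no in-paper argument to measure your proposal against, and I can only assess it on its own terms. Your opening moves are sound: if a Sylow $q$-subgroup $Q$ were normal, then $Q$ is nonabelian because $q\in\rho(G)$ (It\^o's theorem suffices for solvable $G$); the admissibility of $s$ together with the proper-subgraph hypothesis gives $O^{s}(G)=G$ by the lemma on admissible vertices quoted from \cite{BL}; and Clifford theory over $Q$ shows that every $\chi\in\Irr(G)$ with $q\mid\chi(1)$ lies over a nonlinear $\theta\in\Irr(Q)$ with $\chi(1)/\theta(1)$ a $q'$-number, so that the primes of $\chi(1)$ other than $q$ form a clique in $\pi$ and hence lie entirely inside $\pi_1$ or entirely inside $\pi_2$.

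Everything after that, however, is a statement of intent rather than a proof, and you say as much yourself. The two load-bearing steps are missing. First, you assert that the absence of edges between $\pi_1$ and $\pi_2$ lets you ``isolate the subgroup governing the $\pi_2$-side,'' but no such subgroup is exhibited, and producing a concrete normal subgroup or a concrete character degree from this splitting is precisely what the proof consists of; nothing in your sketch explains how $v$, $s$, and $w$ actually enter. Second, your proposed endgame --- a proper normal subgroup $N$ with $\Delta(N)=\Gamma$ --- is a questionable target: nothing in the hypotheses suggests such an $N$ exists once $Q$ is assumed normal (degree graphs of normal subgroups tend to shrink, not persist), and the contradictions standard in this circle of ideas are instead an explicit degree violating P\'alfy's condition or Theorem \ref{et}, or a normal subgroup of $s$-power or abelian index contradicting $O^{s}(G)=G$. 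Relatedly, the claim that ``$O^{s}(G)=G$ prevents $s$ from being lost in the passage to $N$'' conflates two different things: $O^{s}(G)=G$ forbids nontrivial $s$-group quotients of $G$; it does not by itself control whether $s\in\rho(N)$ for a normal subgroup $N$. Until the middle step is carried out, this is an outline with a genuine gap, not a proof.
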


We now present a third and final approach to determine that there is no normal nonabelian Sylow subgroup for some vertex $p$. This is much more technical, and finding examples of such graphs that satisfy the below hypothesis was the original motivation of this paper.

Following \cite{BLL}, we start by considering a finite solvable group $G$ where $|G|$ is minimal and $\Delta(G)$ is its corresponding prime character degree graph. For an arbitrary vertex $p\in\rho(G)$, let $\pi$ consist of all the vertices adjacent to $p$, and let $\rho$ consist of all the vertices nonadjacent to $p$. Let $\pi^*$ and $\rho^*$ denote arbitrary nonempty subsets of $\pi$ and $\rho$, respectively. Finally, let $\pi^*\cup\rho^*$ be an arbitrary vertex set which induces a complete subgraph in $\Delta(G)$. Let $\beta$ be a subset of $\rho(G)$ that contains $\pi^*\cup\rho^*$ such that $\beta$ also induces a complete subgraph in $\Delta(G)$. Set $\mathcal{B}$ to be the union of all such $\beta$'s satisfying these properties for $\pi^*\cup\rho^*$. Consider $\tau:=\mathcal{B}\setminus(\pi^*\cup\rho^*)$, and denote $\tau^*$ to be a subset of $\tau$. Notice that $\tau$ could be empty, depending on the initial set $\pi^*\cup\rho^*$.

\begin{hyp}(\cite{BLL})\label{newhype}
Concerning $\Delta(G)$, we assume the following:
\begin{enumerate}[(i)]
    \item\label{11} for every vertex in $\rho$, there exists a nonadjacent vertex in $\pi$,
    \item\label{22} for every vertex in $\pi$, there exists a nonadjacent vertex in $\rho$,
    \item\label{33} all the vertices in $\pi$ are admissible. Moreover, no proper connected subgraph with vertex set $\{p\}\cup\pi^*\cup\rho$ occurs as the prime character degree graph of any solvable group,
    \item\label{44} for each vertex set $\pi^*\cup\rho^*$ which induces a complete subgraph in $\Delta(G)$, all the vertices in the corresponding set $\tau$ are admissible. Moreover, no proper connected subgraph with vertex set $\rho(G)\setminus\tau^*$ occurs as the prime character degree graph of any solvable group, and
    \item\label{55} if a disconnected subgraph with vertex set $\rho(G)$ does not occur, then it must specifically violate P\'alfy's inequality from \cite{P2}. Finally, if a disconnected subgraph with vertex set $\rho(G)$ does occur, then the sizes of the connected components must be $n>1$ and $2^n-1$.
\end{enumerate}
\end{hyp}

\begin{theorem}\emph{(\cite{BLL})}\label{newmain}
Assume Hypothesis \ref{newhype}. Then $G$ has no normal nonabelian Sylow $p$-subgroup.
\end{theorem}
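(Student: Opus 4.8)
The plan is to argue by contradiction, so suppose that $G$ does have a normal nonabelian Sylow $p$-subgroup $P$. Since $p$ is a genuine vertex of $\Delta(G)$, the case of an abelian $P$ is automatically ruled out by Ito--Michler, so it is exactly the normal nonabelian case that must be excluded. First I would record the consequences of the minimality of $\abs{G}$: for every nontrivial proper normal subgroup $N$ of $G$, both $\Delta(N)$ and $\Delta(G/N)$ are proper subgraphs of $\Delta(G)$, and each of these \emph{genuinely occurs} as a prime character degree graph (namely of $N$, respectively $G/N$). This last observation is what lets the ``no proper connected subgraph with vertex set $\ldots$ occurs'' clauses in parts~(iii) and~(iv) of Hypothesis~\ref{newhype} do their work: any proper subgroup or quotient whose graph I can pin down to one of those prescribed vertex sets, while being connected, will immediately contradict the hypothesis.

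The engine of the argument is Clifford theory applied to the normal Sylow subgroup $P$. Writing $G=P\rtimes H$ for a Hall $p'$-complement $H$ via Schur--Zassenhaus, every $\chi\in\Irr(G)$ lies over some $\theta\in\Irr(P)$ with inertia group $T$, and $\chi(1)=[G:T]\,e\,\theta(1)$, where both $[G:T]$ and $e$ are $p'$-numbers while $\theta(1)$ is a $p$-power. Consequently $\chi(1)_p=\theta(1)$, and a prime $q$ is adjacent to $p$ in $\Delta(G)$ precisely when some nonlinear $\theta$ carries a character over it whose $p'$-degree $[G:T]e$ is divisible by $q$. In particular, the characters lying over linear constituents of $P$ are exactly $\Irr(G/P')$, and all of their degrees are $p'$-numbers; hence $\Delta(G/P')$ is a graph on a subset of $\pi\cup\rho$ that omits $p$ (by Ito--Michler applied to the abelian normal Sylow subgroup $P/P'$ of $G/P'$) and that contains every vertex of $\rho$, since each $q\in\rho$ divides only degrees coprime to $p$ and therefore divides a degree of $G/P'$.

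I would then play $\Delta(G/P')$ against the hypothesis. As $P$ is nonabelian, $P'\neq 1$, so $G/P'$ is a proper quotient and $\Delta(G/P')$ is a proper subgraph that actually occurs; its vertex set is $\rho\cup S$ for some $S\subseteq\pi$, i.e.\ it is obtained from $\rho(G)$ by deleting $p$ together with those $\pi$-vertices dividing only $p$-divisible degrees. The goal is to recognize this set as one of the forbidden shapes: either as $\rho(G)\setminus\tau^*$ governed by part~(iv), or --- after passing instead to a quotient that retains $p$ and a clique $\pi^*$ coming from a nonlinear $\theta$ while collapsing the remaining $\pi$-vertices --- as $\{p\}\cup\pi^*\cup\rho$ governed by part~(iii). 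Parts~(i) and~(ii) guarantee that these deletions leave genuinely smaller, nondegenerate vertex sets, so that the subgraphs are proper and $p$ is not dominated, while the admissibility of the $\pi$- and $\tau$-vertices, through the $O^{q}(G)=G$ and strongly-admissible lemmas (Lemma~\ref{strong}), controls which normal subgroups and quotients are available to realize these vertex sets. Whenever the resulting subgraph is connected, its occurrence contradicts part~(iii) or~(iv); and when such a subgraph is instead disconnected, part~(v) forces it to violate P\'alfy's inequality, which is impossible for an actually-occurring graph. Either way we reach a contradiction, so no normal nonabelian Sylow $p$-subgroup can exist.

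The hard part will be the bookkeeping of the previous paragraph: precisely matching the vertex set of each auxiliary subgroup or quotient to the prescribed forms $\{p\}\cup\pi^*\cup\rho$ and $\rho(G)\setminus\tau^*$, and verifying connectivity in each configuration. This is delicate because the set $S$ of $\pi$-vertices surviving into $\Delta(G/P')$ depends on the fine structure of the action of $H$ on $\Irr(P)$, and every choice of a complete set $\pi^*\cup\rho^*$, hence every resulting $\tau$, must be accounted for simultaneously. I expect the argument to split into cases according to whether the relevant auxiliary graph is connected or disconnected, with the genuinely subtle case being the disconnected one, where the exact numerical content of P\'alfy's inequality invoked in part~(v) is what closes the gap.
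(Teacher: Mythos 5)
This statement is imported from \cite{BLL}; the paper under review states it without proof, so your proposal can only be measured against what Hypothesis \ref{newhype} can actually deliver --- and there it has a structural flaw, not just missing bookkeeping. Your engine is to realize $\Delta(G/P')$ and then match its vertex set against the ``forbidden shapes'' of parts (iii) and (iv). But every vertex set forbidden by the hypothesis contains the vertex $p$ itself: the sets in (iii) are $\{p\}\cup\pi^*\cup\rho$; the sets $\rho(G)\setminus\tau^*$ in (iv) contain $p$ because $\tau\subseteq\mathcal{B}$ and no clique $\beta$ containing $\pi^*\cup\rho^*$ can contain $p$ (such a $\beta$ contains a vertex of the nonempty set $\rho^*$, and $p$ is adjacent to no vertex of $\rho$); and part (v) speaks only of disconnected subgraphs on the \emph{full} vertex set $\rho(G)$. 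Meanwhile the graph you propose to analyze omits $p$: by Lemma \ref{lewisgraph} its vertex set is exactly $\rho(G)\setminus\{p\}=\pi\cup\rho$ (not some smaller $\rho\cup S$ with $S\subseteq\pi$, as you allow --- no $\pi$-vertex can be lost, only edges inside $\pi$). So no case analysis can ever match $\Delta(G/P')$ to a forbidden configuration; the occurrence of $\Delta(G/P')$ is, by itself, perfectly consistent with the hypothesis. This mismatch is precisely why the vertex $p$ in the paper's application (the vertex $c$ in Lemma \ref{lemma<3}) cannot be handled by the strong-admissibility mechanism of Lemma \ref{strong} --- which \emph{is} the ``compare $\Delta(G/P')$ with forbidden subgraphs'' argument --- and why the separate Theorem \ref{newmain} is needed at all.

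Your fallback --- ``passing instead to a quotient that retains $p$ and a clique $\pi^*$'' --- is therefore not a side remark but the entire crux, and you never construct it. One must exhibit normal subgroups $N$ for which all of $\rho$ survives in $\rho(G/N)$ while exactly the right $\pi$-vertices (or $\tau$-vertices) disappear, and then dispose of the possibility that the resulting graph on a forbidden vertex set is disconnected rather than connected; this is where the admissibility assumptions in (iii)/(iv) do their work (through $O^q(G)=G$ for admissible $q$) in tandem with Lewis's two-component theorem (Theorem 5.5 of \cite{L2}) --- compare the proof of Lemma \ref{minnorm} in this paper, which runs exactly this kind of argument and uses tools your outline never invokes. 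Note also that your reading of part (v) is off: it does not say that disconnected subgraphs ``violate P\'alfy's inequality, which is impossible for an actually-occurring graph''; its second clause explicitly permits a disconnected graph on $\rho(G)$ to occur, with component sizes $n$ and $2^n-1$, and it says nothing about disconnected graphs on $\rho(G)\setminus\{p\}$. In sum, the correct ingredients you assemble (Clifford theory for a normal Sylow subgroup, minimality of $|G|$, properness of subgroup and quotient graphs) feed into a plan whose decisive step provably cannot be carried out in the form stated.
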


Finally we present a useful result should there be a normal Sylow $p$-subgroup. The Lemma below tells us about the vertex set of the resulting graph, whereas the edges that could be there are more delicate.

\begin{lemma}\label{lewisgraph}\emph{(\cite{L})}
Let $G$ be a solvable group and let $p\in\rho(G)$. If $P$ is a normal Sylow $p$-subgroup of $G$, then $\rho(G/P')=\rho(G)\setminus\{p\}$.
\end{lemma}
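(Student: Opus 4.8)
The plan is to prove the two inclusions $\rho(G/P')\subseteq\rho(G)\setminus\{p\}$ and $\rho(G)\setminus\{p\}\subseteq\rho(G/P')$ separately, where $P'$ is the derived subgroup of $P$. Note $P'$ is characteristic in the normal subgroup $P$, hence normal in $G$, so $G/P'$ makes sense.

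The first inclusion is routine. Since every irreducible character of $G/P'$ inflates to an irreducible character of $G$ of the same degree, we get $\cd(G/P')\subseteq\cd(G)$ and hence $\rho(G/P')\subseteq\rho(G)$. To see that $p\notin\rho(G/P')$, observe that $P/P'$ is a normal abelian Sylow $p$-subgroup of $G/P'$; by It\^o's theorem (equivalently the easy direction of It\^o--Michler) every character degree of $G/P'$ divides $[G/P':P/P']=[G:P]$, which is coprime to $p$. Together these give $\rho(G/P')\subseteq\rho(G)\setminus\{p\}$.

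For the reverse inclusion---the main content---I would argue by contraposition, fixing a prime $q\neq p$ with $q\notin\rho(G/P')$ and proving $q\notin\rho(G)$. By the It\^o--Michler theorem applied to the solvable group $G/P'$, that quotient has a normal abelian Sylow $q$-subgroup $\overline{Q}$. Writing $\overline{Q}=QP'/P'$ with $Q$ a Sylow $q$-subgroup of $G$ (legitimate since $q\neq p$ and $|P'|$ is a power of $p$, so $Q\cong\overline{Q}$ is abelian), we obtain $QP'\trianglelefteq G$. The goal is then to upgrade the normality of $\overline{Q}$ in $G/P'$ to normality of $Q$ in $G$, after which It\^o--Michler applied to $G$ yields $q\notin\rho(G)$.

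The heart of the argument---and the step I expect to be the main obstacle---is showing that $Q$ actually centralizes $P$. First, since $\overline{Q}$ and $\overline{P}=P/P'$ are normal subgroups of coprime order in $G/P'$ they commute, so $[P,Q]\le P'$. The difficulty is that this only controls $Q$ modulo $P'$: a priori $Q$ could still act nontrivially on $P'$ itself, and this is precisely the configuration that would allow $q\in\rho(G)$ while $q\notin\rho(G/P')$, so it must be excluded. The resolution is a coprime-action argument. Because $Q$ acts coprimely on the $p$-group $P$ one has $[P,Q]=[P,Q,Q]$, and an induction using the three subgroups lemma shows $[\gamma_i(P),Q]\le\gamma_{i+1}(P)$ for every term $\gamma_i(P)$ of the lower central series of $P$. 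Feeding the bound $[P,Q]\le P'=\gamma_2(P)$ repeatedly through $[P,Q]=[P,Q,Q]$ pushes $[P,Q]$ into arbitrarily deep terms $\gamma_m(P)$, so nilpotence of $P$ forces $[P,Q]=1$. Once $Q$ centralizes $P$, a Frattini argument on $QP'\trianglelefteq G$ gives $G=P'\,N_G(Q)=N_G(Q)$ (as $P'\le P\le C_G(Q)\le N_G(Q)$), i.e. $Q\trianglelefteq G$; since $Q$ is abelian, It\^o--Michler gives $q\notin\rho(G)$, completing the contrapositive and hence the proof.
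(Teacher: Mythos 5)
The paper states this lemma as a quotation from \cite{L} and supplies no proof of its own, so there is nothing internal to compare against; judged on its own terms, your argument is correct and follows what is essentially the standard route. The forward inclusion via It\^o's theorem applied to the abelian normal Sylow subgroup $P/P'$ of $G/P'$ is fine, and the reverse inclusion correctly combines the solvable It\^o--Michler theorem with the classical fact (due to Burnside) that a $p'$-group acting trivially on $P/P'$ --- hence on $P/\Phi(P)$ --- must centralize the $p$-group $P$; your three-subgroups induction along the lower central series, fed through $[P,Q]=[P,Q,Q]$, is a valid direct proof of that fact, and the concluding Frattini argument upgrading $\overline{Q}\trianglelefteq G/P'$ to $Q\trianglelefteq G$ is correct.
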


Concerning Lemma \ref{lewisgraph}, the vertex $p$ is removed, along with all incident edges. Furthermore, edges between vertices that are adjacent to $p$ may also be lost.

\subsection{Graphs with diameter three}

The following was presented in \cite{Sass}. Suppose $\Gamma$ is a graph of diameter three. We can partition $\rho(\Gamma)$ into four nonempty disjoint sets: $\rho(\Gamma)=\rho_1\cup\rho_2\cup\rho_3\cup\rho_4$. One can do this in the following way: find vertices $p$ and $q$ where the distance between them is three. Let $\rho_4$ be the set of all vertices that are distance three from the vertex $p$, which will include the vertex $q$. Let $\rho_3$ be the set of all vertices that are distance two from the vertex $p$. Let $\rho_2$ be the set of all vertices that are adjacent to the vertex $p$ and some vertex in $\rho_3$. Finally, let $\rho_1$ consist of $p$ and all vertices adjacent to $p$ that are not adjacent to anything in $\rho_3$. This notation is not unique, and depends on the vertices $p$ and $q$. Using the above notation, one can always arrange the four disjoint sets to have the following result.

\begin{proposition}\label{sassresult}\emph{(\cite{Sass})}
Let $G$ be a solvable group where $\Delta(G)$ has diameter three. One then has the following:
\begin{enumerate}[(i)]
    \item\label{sass1} $|\rho_3|\geq3$,
    \item\label{sass2} $|\rho_1\cup\rho_2|\leq|\rho_3\cup\rho_4|$,
    \item\label{sass3} if $|\rho_1\cup\rho_2|=n$, then $|\rho_3\cup\rho_4|\geq2^n$, and
    \item\label{sass4} $G$ has a normal Sylow $p$-subgroup for exactly one prime $p\in\rho_3$.
\end{enumerate}
\end{proposition}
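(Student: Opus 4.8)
The plan is to prove the four statements in the order (iv), then (iii) and (ii), and finally (i), since the normal Sylow subgroup produced in (iv) is the engine that drives the counting arguments. Before invoking any group theory I would record the purely combinatorial consequences of the distance-based construction. Because $\rho_4$ is the set of vertices at distance three from $p$ and $\rho_3$ the set at distance two, a vertex of $\rho_1$ can be adjacent only to vertices of $\rho_1\cup\rho_2$, and a vertex of $\rho_4$ can be adjacent only to vertices of $\rho_3\cup\rho_4$; in particular there are no edges between $\rho_1$ and $\rho_3\cup\rho_4$, and none between $\rho_4$ and $\rho_1\cup\rho_2$. Thus the only edges bridging the ``$p$-side'' $\rho_1\cup\rho_2$ and the ``$q$-side'' $\rho_3\cup\rho_4$ are the $\rho_2$--$\rho_3$ edges, and any shortest path realizing the diameter passes through a vertex of $\rho_2$ followed by a vertex of $\rho_3$.

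For (iv), I would pass to a solvable group $G$ of minimal order realizing the graph and analyze its Fitting subgroup. The goal is to show that $G$ has a normal Sylow $p$-subgroup for some prime $p$, and that this prime must lie in $\rho_3$. The existence of such a normal Sylow subgroup is the heart of the diameter-three theory: the rigid layered structure above forces a nonabelian chief factor carrying the ``middle'' primes, and the prime $p$ of that factor has a normal (necessarily nonabelian) Sylow subgroup. Uniqueness then follows from the result of Lewis [Theorem~5.5 of \cite{L2}], applied once a disconnection of a suitable quotient graph is in hand: that theorem guarantees precisely one prime whose Sylow subgroup of the Fitting subgroup is noncentral. Locating $p$ inside $\rho_3$, rather than in $\rho_2$ or $\rho_4$, uses that $p$ must simultaneously mediate between the two sides, a role only a distance-two vertex can play.

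With $p\in\rho_3$ and its normal Sylow $p$-subgroup $P$ in hand, the counting statements (ii) and (iii) follow by passing to $G/P'$. By Lemma~\ref{lewisgraph} we have $\rho(G/P')=\rho(G)\setminus\{p\}$, so the vertex set splits into $\rho_1\cup\rho_2$ and $(\rho_3\setminus\{p\})\cup\rho_4$. The key claim to verify is that $\Delta(G/P')$ is \emph{disconnected} with exactly these two pieces as its components; combinatorially this is plausible because removing $p$, together with the edges among its neighbors that Lemma~\ref{lewisgraph} permits to vanish, severs the only bridges between the two sides. Writing $n=\abs{\rho_1\cup\rho_2}$ and $m=\abs{\rho_3\cup\rho_4}$, P\'alfy's inequality from \cite{P2} applied to this two-component graph (with $\rho_1\cup\rho_2$ checked to be the smaller component, of size $n$, and the other of size $m-1$) gives $m-1\geq 2^{n}-1$, that is $m\geq 2^{n}$, which is exactly (iii); the appearance of $2^{n}$ rather than $2^{n}-1$ is accounted for precisely by the single vertex $p$ that was removed. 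Statement (ii) then follows since $2^{n}>n$ for every $n\geq 1$, and $n\geq 1$ because $p\in\rho_1$.

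Finally, for (i) I would argue that $\rho_3$ cannot be too small. Once (iii) supplies $\abs{\rho_3\cup\rho_4}\geq 2$, the remaining content is to rule out $\abs{\rho_3}\in\{1,2\}$, and here I expect to combine the generalized P\'alfy condition (Theorem~\ref{et}, no odd cycle in $\overline{\Delta}(G)$) with the structure of $P$: since $\rho_1$ is completely joined to $\rho_3\cup\rho_4$ in the complement and $\rho_4$ is completely joined to $\rho_1\cup\rho_2$, too few vertices in $\rho_3$ would create a short odd cycle in $\overline{\Delta}(G)$ or would contradict the degree constraints imposed by the normal Sylow $p$-subgroup. I expect the main obstacle to lie squarely in (iv): producing the normal Sylow subgroup and pinning its prime to $\rho_3$ requires genuine representation-theoretic input about the action of $G$ on its chief factors, whereas (ii) and (iii) become bookkeeping on top of P\'alfy's inequality once the disconnection of $\Delta(G/P')$ is established.
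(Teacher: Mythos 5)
This proposition is quoted from Sass's paper \cite{Sass}; the present paper gives no proof of it, so there is nothing internal to compare your attempt against. Judged on its own, your outline does capture the broad shape of the known argument: deducing (iii) by applying P\'alfy's inequality to a disconnected $\Delta(G/P')$ with components $\rho_1\cup\rho_2$ and $(\rho_3\setminus\{p\})\cup\rho_4$, so that $\abs{\rho_3\cup\rho_4}-1\geq 2^n-1$ upgrades to $\abs{\rho_3\cup\rho_4}\geq 2^n$, is indeed how the bound arises, and your observation that every edge between the two sides runs from $\rho_2$ to $\rho_3$ is correct.

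However, the two load-bearing steps are asserted rather than proved, and the justification you offer for one of them would fail. First, the existence of a normal Sylow $p$-subgroup for a prime $p\in\rho_3$ --- your item (iv) --- is the substantive content of Sass's paper; ``the rigid layered structure forces a nonabelian chief factor carrying the middle primes'' is not an argument, and Theorem 5.5 of \cite{L2} cannot supply uniqueness until the disconnection it presupposes has been established. Second, the disconnection of $\Delta(G/P')$ is not a combinatorial consequence of deleting the single vertex $p$: when $\abs{\rho_3}>1$ there are in general many $\rho_2$--$\rho_3$ edges not incident to $p$, and Lemma \ref{lewisgraph} only permits the loss of edges between vertices adjacent to $p$, so those bridges survive under your reasoning. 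What is actually needed is the character-theoretic fact that every degree producing an edge from $\rho_1\cup\rho_2$ to $\rho_3$ is divisible by $p$, so that all such edges vanish in $G/P'$; that is where the proof lives. Part (i) is likewise left as an expectation. The proposal is a reasonable roadmap for reading \cite{Sass}, but it is not a proof.
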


\section{Families of Graphs} \label{gfamilies}

In this section we introduce the two families of graphs that we consider throughout this paper: $\{\SL{k}{n}\}$ and $\{\SR{k}{n}\}$.We shall use $\{\SLR{k}{n}\}$ to refer to both families simultaneously. Both $\SL{k}{n}$ and $\SR{k}{n}$ are constructed similarly. Let $k$ and $n$ be integers satisfying $1 \leq n \leq k$. The graph $\SLR{k}{n}$ consists of two distinct subgraphs $A$ and $B$, a fixed vertex $c$, and obeys the following:
\begin{enumerate}[(i)]
    \item $A$ is a complete graph on $k$ vertices $a_1, a_2, \ldots, a_k$,
    \item $B$ is a complete graph on $k+n$ vertices $b_1,b_2, \ldots, b_k, \ldots, b_{k+n}$,
    \item $c \notin \rho(A)$ and $c \notin \rho(B)$,
    \item $\rho(A) \cap \rho(B) = \varnothing$,
    \item there is an edge between $a_i$ and $b_i$ for all $1 \leq i \leq k$,
    \item there is an edge between $a_i$ and $b_{k+i}$ for all $1 \leq i \leq n$,
    \item there is an edge between $c$ and $a_i$ for all $1 \leq i \leq k$ in $\SL{k}{n}$,
    \item there is an edge between $c$ and $b_i$ for all $1 \leq i \leq k+n$ in $\SR{k}{n}$,
    \item there are no edges in the graph $\SLR{k}{n}$ other than the edges described in (v) - (viii).
\end{enumerate}

At a birds-eye view of the graph $\SLR{k}{n}$, we can see two distinct complete subgraphs: $A$ (the ``left" subgraph) and $B$ (the ``right" subgraph). The left subgraph, $A$, always has $k$ vertices; the first $n$ of these vertices are adjacent to precisely two vertices in $B$. So, despite the number of conditions to obey, construction of the graph $\SLR{k}{n}$ is very simple in practice.

First, one chooses a positive integer $k$ and starts with a complete graph on $k$ vertices. The next choice is $n$, which is the number of vertices in $A$ attached in a one-to-two fashion to vertices in $B$. Any remaining vertex of $A$ simply has one edge between it and the vertex of corresponding index in $B$. Keeping in mind that $B$ is a complete graph, it is easy to construct $B$ given $A$ and $n$. The final decision is where the vertex $c$ ends up -- with the left graph $A$ (as in $\SL{k}{n}$) or the right graph $B$ (as in $\SR{k}{n}$). Adding in the necessary edges between $c$ and the vertices of $A$ (respectively, $B$), the subgraph with vertices $\rho(A) \cup \{c\}$ (respectively, $\rho(B) \cup \{c\}$) and all the associated edges yields a complete graph on $k+1$ (respectively, $k+n+1$) vertices. The vertex $c$ is special in the sense that it is ``one-sided." It is only adjacent to vertices in $A$ or $B$, but not both. All other vertices of  $\SLR{k}{n}$  are adjacent to vertices in both $A$ and $B$.

\begin{remark}\label{rmk}
The graphs in $\{\SLR{k}{n}\}$ arise as a natural extension of the graphs $\Gamma_{k,t}$ in \cite{BissLaub}. As in \cite{BissLaub}, the graphs constructed here are connected graphs of diameter two and hence are worthy of study -- this is because graphs of diameter one are required to be complete, and graphs of diameter three have been substantially tamed in \cite{Sass}. The graphs $\SLR{k}{n}$ differ from $\Gamma_{k,t}$ in two important aspects. First, some (or perhaps all) of the edges in the subgraph $A$ are connected to $B$ in a one-to-two fashion, whereas the vertices in the analogous subgraphs of $\Gamma_{k,t}$ are always connected to one another in an injective fashion. Second is the addition of the special vertex $c$ that is incident only to the subgraph $A$ (in $\SL{k}{n}$) or the subgraph $B$ (in $\SR{k}{n}$). Such points arise in $\Gamma_{k,t}$ when $t$ is strictly less than $k$ but are always associated to the subgraph with more vertices. However, in the construction of $\SLR{k}{n}$, we are free to choose from the two subgraphs to which we will associate the vertex $c$. This choice alters the graphs substantially enough that both $\SL{k}{n}$ and $\SR{k}{n}$ need to be considered separately. Finally, we observe that it is not worthwhile for us to investigate this special vertex $c$ existing on the left and right simultaneously, for if that occurred, the graph would have diameter three. 
\end{remark}

One can see Figure \ref{figL} for some examples of graphs in the family $\{\SL{k}{n}\}$ and Figure \ref{figR} for examples of graphs in the family $\{\SR{k}{n}\}$.
\begin{figure}[htb]
    \centering
$
\begin{tikzpicture}[scale=2]
\node (1b) at (.5,1) {$a_1$};
\node (2b) at (.5,0) {$a_2$};
\node (3b) at (1,.5) {$a_3$};
\node (11b) at (2.25,1) {$b_1$};
\node (22b) at (2.25,0) {$b_2$};
\node (33b) at (1.75,.5) {$b_3$};
\node (44b) at (2.75,.75) {$b_4$};
\node (55b) at (2.75,.25) {$b_5$};
\node (66b) at (0,.5) {$c$};
\path[font=\small,>=angle 90]
(1b) edge node [right] {$ $} (2b)
(1b) edge node [right] {$ $} (3b)
(2b) edge node [right] {$ $} (3b)
(66b) edge node [right] {$ $} (1b)
(66b) edge node [right] {$ $} (2b)
(66b) edge node [right] {$ $} (3b)
(11b) edge node [right] {$ $} (22b)
(11b) edge node [right] {$ $} (33b)
(11b) edge node [right] {$ $} (44b)
(11b) edge node [right] {$ $} (55b)
(22b) edge node [right] {$ $} (33b)
(22b) edge node [right] {$ $} (33b)
(22b) edge node [right] {$ $} (44b)
(22b) edge node [right] {$ $} (55b)
(33b) edge node [right] {$ $} (44b)
(33b) edge node [right] {$ $} (55b)
(44b) edge node [right] {$ $} (55b)
(1b) edge node [right] {$ $} (11b)
(2b) edge node [right] {$ $} (22b)
(3b) edge node [right] {$ $} (33b)
(1b) edge node [right] {$ $} (44b)
(2b) edge node [right] {$ $} (55b);
\node (0f) at (3.5,.5) {$c$};
\node (1f) at (4,1) {$a_1$};
\node (2f) at (4,0) {$a_2$};
\node (3f) at (4.5,.5) {$a_3$};
\node (11f) at (5.75,1) {$b_1$};
\node (22f) at (5.75,0) {$b_2$};
\node (33f) at (5.25,.75) {$b_3$};
\node (44f) at (6.25,.75) {$b_4$};
\node (55f) at (6.25,.25) {$b_5$};
\node (66f) at (5.25,.25) {$b_6$};
\path[font=\small,>=angle 90]
(0f) edge node [right] {$ $} (1f)
(0f) edge node [right] {$ $} (2f)
(0f) edge node [right] {$ $} (3f)
(1f) edge node [right] {$ $} (2f)
(1f) edge node [right] {$ $} (3f)
(2f) edge node [right] {$ $} (3f)
(11f) edge node [right] {$ $} (22f)
(11f) edge node [right] {$ $} (33f)
(11f) edge node [right] {$ $} (44f)
(11f) edge node [right] {$ $} (55f)
(11f) edge node [right] {$ $} (66f)
(22f) edge node [right] {$ $} (33f)
(22f) edge node [right] {$ $} (44f)
(22f) edge node [right] {$ $} (55f)
(22f) edge node [right] {$ $} (66f)
(33f) edge node [right] {$ $} (44f)
(33f) edge node [right] {$ $} (55f)
(33f) edge node [right] {$ $} (66f)
(44f) edge node [right] {$ $} (55f)
(44f) edge node [right] {$ $} (66f)
(55f) edge node [right] {$ $} (66f)
(1f) edge node [right] {$ $} (11f)
(2f) edge node [right] {$ $} (22f)
(3f) edge node [right] {$ $} (33f)
(1f) edge node [right] {$ $} (44f)
(2f) edge node [right] {$ $} (55f)
(3f) edge node [right] {$ $} (66f);
\node (1) at (0,-1) {$c$};
\node (2) at (.5,-.5) {$a_3$};
\node (3) at (.5,-1.5) {$a_4$};
\node (4) at (1,-.75) {$a_1$};
\node (5) at (1,-1.25) {$a_2$};
\node (6) at (2.25,-.5) {$b_3$};
\node (7) at (2.25,-1.5) {$b_4$};
\node (8) at (2.75,-.75) {$b_1$};
\node (9) at (2.75,-1.25) {$b_2$};
\node (10) at (1.75,-1) {$b_5$};
\path[font=\small,>=angle 90]
(1) edge node [right] {$ $} (2)
(1) edge node [right] {$ $} (3)
(1) edge node [right] {$ $} (4)
(1) edge node [right] {$ $} (5)
(2) edge node [right] {$ $} (3)
(2) edge node [right] {$ $} (4)
(2) edge node [right] {$ $} (5)
(3) edge node [right] {$ $} (4)
(3) edge node [right] {$ $} (5)
(4) edge node [right] {$ $} (5)
(6) edge node [right] {$ $} (7)
(6) edge node [right] {$ $} (8)
(6) edge node [right] {$ $} (9)
(6) edge node [right] {$ $} (10)
(7) edge node [right] {$ $} (8)
(7) edge node [right] {$ $} (9)
(7) edge node [right] {$ $} (10)
(8) edge node [right] {$ $} (9)
(8) edge node [right] {$ $} (10)
(9) edge node [right] {$ $} (10)
(2) edge node [right] {$ $} (6)
(3) edge node [right] {$ $} (7)
(4) edge node [right] {$ $} (8)
(5) edge node [right] {$ $} (9)
(4) edge node [right] {$ $} (10);
\node (1d) at (4,-.5) {$a_1$};
\node (2d) at (4,-1.5) {$a_2$};
\node (3d) at (4.5,-.75) {$a_3$};
\node (4d) at (4.5,-1.25) {$a_4$};
\node (11d) at (5.75,-.5) {$b_1$};
\node (22d) at (5.75,-1.5) {$b_2$};
\node (33d) at (5.25,-.75) {$b_3$};
\node (44d) at (5.25,-1.25) {$b_4$};
\node (55d) at (6.25,-.75) {$b_5$};
\node (66d) at (6.25,-1.25) {$b_6$};
\node (77d) at (3.5,-1) {$c$};
\path[font=\small,>=angle 90]
(1d) edge node [right] {$ $} (2d)
(1d) edge node [right] {$ $} (3d)
(1d) edge node [right] {$ $} (4d)
(2d) edge node [right] {$ $} (3d)
(2d) edge node [right] {$ $} (4d)
(3d) edge node [right] {$ $} (4d)
(77d) edge node [right] {$ $} (1d)
(77d) edge node [right] {$ $} (2d)
(77d) edge node [right] {$ $} (3d)
(77d) edge node [right] {$ $} (4d)
(11d) edge node [right] {$ $} (22d)
(11d) edge node [right] {$ $} (33d)
(11d) edge node [right] {$ $} (44d)
(11d) edge node [right] {$ $} (55d)
(11d) edge node [right] {$ $} (66d)
(22d) edge node [right] {$ $} (33d)
(22d) edge node [right] {$ $} (33d)
(22d) edge node [right] {$ $} (44d)
(22d) edge node [right] {$ $} (55d)
(22d) edge node [right] {$ $} (66d)
(33d) edge node [right] {$ $} (44d)
(33d) edge node [right] {$ $} (55d)
(33d) edge node [right] {$ $} (66d)
(44d) edge node [right] {$ $} (55d)
(44d) edge node [right] {$ $} (66d)
(55d) edge node [right] {$ $} (66d)
(1d) edge node [right] {$ $} (11d)
(2d) edge node [right] {$ $} (22d)
(3d) edge node [right] {$ $} (33d)
(4d) edge node [right] {$ $} (44d)
(1d) edge node [right] {$ $} (55d)
(2d) edge node [right] {$ $} (66d);
\end{tikzpicture}
$
    \caption{Examples of graphs in the family $\{\SL{k}{n}\}$: $\SL{3}{2}$, $\SL{3}{3}$, $\SL{4}{1}$, and $\SL{4}{2}$}
    \label{figL}
\end{figure}
\begin{figure}[htb]
    \centering
$
\begin{tikzpicture}[scale=2]
\node (1a) at (.5,.5) {$a_1$};
\node (2a) at (0,1) {$a_2$};
\node (3a) at (0,0) {$a_3$};
\node (11a) at (1.25,.75) {$b_1$};
\node (22a) at (1.75,1) {$b_2$};
\node (33a) at (1.75,0) {$b_3$};
\node (44a) at (1.25,.25) {$b_4$};
\node (55a) at (2.25,.5) {$c$};
\path[font=\small,>=angle 90]
(1a) edge node [right] {$ $} (2a)
(1a) edge node [right] {$ $} (3a)
(2a) edge node [right] {$ $} (3a)
(11a) edge node [right] {$ $} (22a)
(11a) edge node [right] {$ $} (33a)
(11a) edge node [right] {$ $} (44a)
(11a) edge node [right] {$ $} (55a)
(22a) edge node [right] {$ $} (33a)
(22a) edge node [right] {$ $} (44a)
(22a) edge node [right] {$ $} (55a)
(33a) edge node [right] {$ $} (44a)
(33a) edge node [right] {$ $} (55a)
(44a) edge node [right] {$ $} (55a)
(1a) edge node [right] {$ $} (11a)
(2a) edge node [right] {$ $} (22a)
(3a) edge node [right] {$ $} (33a)
(1a) edge node [right] {$ $} (44a);
\node (1b) at (3,1) {$a_1$};
\node (2b) at (3,0) {$a_2$};
\node (3b) at (3.5,.5) {$a_3$};
\node (11b) at (4.75,1) {$b_1$};
\node (22b) at (4.75,0) {$b_2$};
\node (33b) at (4.25,.5) {$b_3$};
\node (44b) at (5.25,.85) {$b_4$};
\node (55b) at (5.25,.15) {$b_5$};
\node (66b) at (5.75,.5) {$c$};
\path[font=\small,>=angle 90]
(1b) edge node [right] {$ $} (2b)
(1b) edge node [right] {$ $} (3b)
(2b) edge node [right] {$ $} (3b)
(11b) edge node [right] {$ $} (22b)
(11b) edge node [right] {$ $} (33b)
(11b) edge node [right] {$ $} (44b)
(11b) edge node [right] {$ $} (55b)
(11b) edge node [right] {$ $} (66b)
(22b) edge node [right] {$ $} (33b)
(22b) edge node [right] {$ $} (33b)
(22b) edge node [right] {$ $} (44b)
(22b) edge node [right] {$ $} (55b)
(22b) edge node [right] {$ $} (66b)
(33b) edge node [right] {$ $} (44b)
(33b) edge node [right] {$ $} (55b)
(33b) edge node [right] {$ $} (66b)
(44b) edge node [right] {$ $} (55b)
(44b) edge node [right] {$ $} (66b)
(55b) edge node [right] {$ $} (66b)
(1b) edge node [right] {$ $} (11b)
(2b) edge node [right] {$ $} (22b)
(3b) edge node [right] {$ $} (33b)
(1b) edge node [right] {$ $} (44b)
(2b) edge node [right] {$ $} (55b);
\node (1c) at (0,-.5) {$a_1$};
\node (2c) at (0,-1.5) {$a_2$};
\node (3c) at (.5,-.75) {$a_3$};
\node (4c) at (.5,-1.25) {$a_4$};
\node (11c) at (1.75,-.5) {$b_1$};
\node (22c) at (1.75,-1.5) {$b_2$};
\node (33c) at (1.25,-.75) {$b_3$};
\node (44c) at (1.25,-1.25) {$b_4$};
\node (55c) at (2.25,-.75) {$b_5$};
\node (66c) at (2.25,-1.25) {$c$};
\path[font=\small,>=angle 90]
(1c) edge node [right] {$ $} (2c)
(1c) edge node [right] {$ $} (3c)
(1c) edge node [right] {$ $} (4c)
(2c) edge node [right] {$ $} (3c)
(2c) edge node [right] {$ $} (4c)
(3c) edge node [right] {$ $} (4c)
(11c) edge node [right] {$ $} (22c)
(11c) edge node [right] {$ $} (33c)
(11c) edge node [right] {$ $} (44c)
(11c) edge node [right] {$ $} (55c)
(11c) edge node [right] {$ $} (66c)
(22c) edge node [right] {$ $} (33c)
(22c) edge node [right] {$ $} (33c)
(22c) edge node [right] {$ $} (44c)
(22c) edge node [right] {$ $} (55c)
(22c) edge node [right] {$ $} (66c)
(33c) edge node [right] {$ $} (44c)
(33c) edge node [right] {$ $} (55c)
(33c) edge node [right] {$ $} (66c)
(44c) edge node [right] {$ $} (55c)
(44c) edge node [right] {$ $} (66c)
(55c) edge node [right] {$ $} (66c)
(1c) edge node [right] {$ $} (11c)
(2c) edge node [right] {$ $} (22c)
(3c) edge node [right] {$ $} (33c)
(4c) edge node [right] {$ $} (44c)
(1c) edge node [right] {$ $} (55c);
\node (1d) at (3,-.5) {$a_1$};
\node (2d) at (3,-1.5) {$a_2$};
\node (3d) at (3.5,-.75) {$a_3$};
\node (4d) at (3.5,-1.25) {$a_4$};
\node (11d) at (4.75,-.5) {$b_1$};
\node (22d) at (4.75,-1.5) {$b_2$};
\node (33d) at (4.25,-.75) {$b_3$};
\node (44d) at (4.25,-1.25) {$b_4$};
\node (55d) at (5.25,-.65) {$b_5$};
\node (66d) at (5.25,-1.35) {$b_6$};
\node (77d) at (5.75,-1) {$c$};
\path[font=\small,>=angle 90]
(1d) edge node [right] {$ $} (2d)
(1d) edge node [right] {$ $} (3d)
(1d) edge node [right] {$ $} (4d)
(2d) edge node [right] {$ $} (3d)
(2d) edge node [right] {$ $} (4d)
(3d) edge node [right] {$ $} (4d)
(11d) edge node [right] {$ $} (22d)
(11d) edge node [right] {$ $} (33d)
(11d) edge node [right] {$ $} (44d)
(11d) edge node [right] {$ $} (55d)
(11d) edge node [right] {$ $} (66d)
(11d) edge node [right] {$ $} (77d)
(22d) edge node [right] {$ $} (33d)
(22d) edge node [right] {$ $} (33d)
(22d) edge node [right] {$ $} (44d)
(22d) edge node [right] {$ $} (55d)
(22d) edge node [right] {$ $} (66d)
(22d) edge node [right] {$ $} (77d)
(33d) edge node [right] {$ $} (44d)
(33d) edge node [right] {$ $} (55d)
(33d) edge node [right] {$ $} (66d)
(33d) edge node [right] {$ $} (77d)
(44d) edge node [right] {$ $} (55d)
(44d) edge node [right] {$ $} (66d)
(44d) edge node [right] {$ $} (77d)
(55d) edge node [right] {$ $} (66d)
(55d) edge node [right] {$ $} (77d)
(66d) edge node [right] {$ $} (77d)
(1d) edge node [right] {$ $} (11d)
(2d) edge node [right] {$ $} (22d)
(3d) edge node [right] {$ $} (33d)
(4d) edge node [right] {$ $} (44d)
(1d) edge node [right] {$ $} (55d)
(2d) edge node [right] {$ $} (66d);
\end{tikzpicture}
$
    \caption{Examples of graphs in the family $\{\SR{k}{n}\}$: $\SR{3}{1}$, $\SR{3}{2}$, $\SR{4}{1}$, and $\SR{4}{2}$}
    \label{figR}
\end{figure}
The goal of this paper will be to fully investigate the former family. Using the power of Lemma \ref{lemma3}, we will illustrate that precisely one graph of the form $\SL{k}{n}$ occurs as the prime character degree graph of a solvable group. In particular, we shall prove our main theorem, stated again below:

\begin{thmmain}
The graph $\SL{k}{n}$ occurs as the prime character degree graph of a solvable group only when $(k,n)=(1,1)$ (see Figure \ref{fig11L}). 
\end{thmmain}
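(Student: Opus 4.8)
The plan is to separate the single realizable pair from all the others and to dispatch the others by a single application of Lemma \ref{lemma3}, organized as an induction on $n$ with $k$ fixed. For the positive direction, when $(k,n)=(1,1)$ the graph $\SL{1}{1}$ is just a triangle on $\{a_1,b_1,b_2\}$ with the pendant vertex $c$ attached at $a_1$; here I would simply exhibit (or quote from the known small-order classifications) a solvable group realizing this graph, as recorded in Figure \ref{fig11L}. Everything else is devoted to the negative direction.

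For the negative direction assume $(k,n)\neq(1,1)$. Since $1\le n\le k$ this forces $k\ge2$, so $\SL{k}{n}$ has $2k+n+1\ge6$ vertices and Lemma \ref{lemma3} is applicable. I would first observe that P\'alfy's condition holds for free: every non-edge of $\SL{k}{n}$ runs between $\{c\}\cup\rho(A)$ and $\rho(B)$, and each of these two sets induces a complete subgraph, so the complement is bipartite and in particular has no odd cycle (Theorem \ref{et}). Now suppose for contradiction that $\SL{k}{n}=\Delta(G)$ with $G$ solvable of minimal order. Because the degrees of $G/N$ are degrees of $G$, and every irreducible degree of a normal subgroup $N$ divides some degree of $G$, both $\Delta(G/N)$ and $\Delta(N)$ are subgraphs of $\SL{k}{n}$ for each proper normal $N\trianglelefteq G$; minimality of $\abs{G}$ then upgrades these to \emph{proper} subgraphs. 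Thus all of the group-side hypotheses of Lemma \ref{lemma3} are in place except the requirement that the Fitting subgroup $F$ of $G$ be minimal normal.

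The combinatorial half of the work is to produce the configuration demanded by Lemma \ref{lemma3}: distinct vertices $a,b$ and admissible vertices so that $a$ is adjacent to one of them, $b$ is not, and $a$ is non-adjacent to the other. I would take $a=a_1$, whose only neighbours in $\rho(B)$ are $b_1$ and $b_{k+1}$, and locate admissible vertices among $\rho(B)$ (and, in the base case, the special vertex $c$, which is non-adjacent to every $b_j$ and so readily supplies a suitable $b$); a vertex such as $b_2$ is non-adjacent to $a_1$ and is the natural candidate for the second admissible vertex. Admissibility is exactly where Theorem \ref{KT} and Corollary \ref{subKT} enter: deleting such a vertex, or any nonempty set of edges incident to it, should yield either a connected proper subgraph of some $\Gamma_{K,T}$ with $K\ge T\ge2$ or a disconnected/diameter-three configuration, none of which occur (using Corollary \ref{subKT}, Proposition \ref{sassresult}, and the P\'alfy-inequality obstruction for disconnected subgraphs). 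For the inductive step the key observation is that deleting the extra vertex $b_{k+n}$ from $\SL{k}{n}$ returns precisely $\SL{k}{n-1}$, so the inductive hypothesis immediately gives clause (i) of admissibility for $b_{k+n}$, while Corollary \ref{subKT} handles the edge-deletions of clause (ii); in the base case $n=1$ this reduces to a direct appeal to Corollary \ref{subKT}.

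The group-theoretic half is arranging the last hypothesis of Lemma \ref{lemma3}, that $F$ be minimal normal, and this is the step I expect to be the main obstacle. My plan is first to rule out a normal nonabelian Sylow subgroup for every vertex, feeding the admissible (indeed strongly admissible) vertices just produced into Lemma \ref{strong}, Lemma \ref{pi}, or Theorem \ref{newmain} via Hypothesis \ref{newhype}. With no normal nonabelian Sylow subgroup available, I would then analyze the Fitting subgroup: using Lemma \ref{lewisgraph} to control the graph of a quotient by a normal Sylow subgroup, together with the fact that every proper quotient graph is a proper subgraph that cannot occur, I would eliminate the possibility that $F$ properly contains a minimal normal subgroup, reducing the minimal counterexample to the case $F$ minimal normal. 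At that point Lemma \ref{lemma3} applies verbatim and yields the contradiction. The delicate points will be the bookkeeping in the admissibility verification (confirming that \emph{every} edge-deleted subgraph genuinely fails to occur) and forcing the Fitting subgroup to be minimal normal rather than merely to contain a minimal normal subgroup; this is where I expect the bulk of the casework to reside.
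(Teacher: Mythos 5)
Your overall architecture coincides with the paper's: realize $\SL{1}{1}$ directly, then for $(k,n)\neq(1,1)$ induct on $n$ with base case $n=1$, eliminate normal nonabelian Sylow subgroups class by class (strong admissibility and Lemma \ref{strong} for the $a_i$ and the two ``outer'' vertices $b_1,b_{k+1}$; Lemma \ref{pi} for the remaining $b_i$; Hypothesis \ref{newhype} and Theorem \ref{newmain} for $c$), force the Fitting subgroup to be minimal normal, and finish with Lemma \ref{lemma3}. Your observation that deleting $b_{k+n}$ from $\SL{k}{n}$ recovers $\SL{k}{n-1}$ is exactly the mechanism by which the inductive hypothesis enters.

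The one step that would fail is your instantiation of Lemma \ref{lemma3}. You propose $a=a_1$, with the admissible vertex adjacent to $a$ drawn from $\{b_1,b_{k+1}\}$ and the admissible vertex non-adjacent to $a$ being ``a vertex such as $b_2$.'' Since $a_1$ is adjacent to every vertex of $\{c\}\cup\rho(A)\cup\{b_1,b_{k+1}\}$, this choice forces the second admissible vertex to lie in $\{b_2,\ldots,b_k\}$, and admissibility of those vertices is not available with the tools at hand: removing $b_2$ and its incident edges from $\SL{k}{1}$ leaves a connected diameter-two graph in which $a_2$ has become a second ``special'' left-hand vertex (one of the two-special-vertex graphs the paper explicitly defers in Section \ref{last}); it is not a subgraph of any $\Gamma_{K,T}$ with the same vertex set (the vertex $a_1$ retains two cross edges), its complement is still bipartite, and it has diameter two, so none of Theorem \ref{KT}, Corollary \ref{subKT}, Theorem \ref{et}, or Proposition \ref{sassresult} rules it out. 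This is precisely why the paper handles $b_2,\ldots,b_k$ via Lemma \ref{pi} rather than via admissibility. The repair is to reverse the roles: take $a=b_1$ and $b=b_2$, with admissible vertices $c=a_1$ (adjacent to $b_1$, not to $b_2$) and $d=a_2$ (not adjacent to $b_1$), all of whose admissibility your own Lemma-\ref{lemma<1}-style analysis already supplies. With that substitution the argument closes as in the paper.
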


\begin{figure}[htb]
    \centering
$
\begin{tikzpicture}[scale=2]
\node (0a) at (0,.5) {$c$};
\node (1a) at (.5,.5) {$a_1$};
\node (11a) at (1.25,1) {$b_1$};
\node (22a) at (1.25,0) {$b_2$};
\path[font=\small,>=angle 90]
(0a) edge node [right] {$ $} (1a)
(11a) edge node [right] {$ $} (22a)
(1a) edge node [right] {$ $} (11a)
(1a) edge node [right] {$ $} (22a);
\end{tikzpicture}
$
    \caption{The graph $\SL{1}{1}$}
    \label{fig11L}
\end{figure}

\section{The First Family of Graphs}\label{first}

The goal of this section is to prove that $\SL{k}{n}$ occurs as the prime character degree graph of a solvable group if and only if $(k,n) = (1,1)$. First, we note that it is easy to illustrate $\SL{1}{1}$ is the prime character degree graph of a solvable group; it can be realized as $\Gamma_{3,1}$ from \cite{BissLaub}. We shall show that if $(k,n) \neq (1,1)$, then $\SL{k}{n}$ is not the prime character degree graph of any solvable group. We will proceed by induction on $n$. It is worthwhile to note here that induction on $k$ is not necessary: $\SL{t}{n}$ does not depend on $\SL{k}{n}$ for integers satisfying $1 \leq n \leq t \leq k$. However, in inducting on $n$ and not $k$, we shall operate under the assumption that $k$ is large enough for the graph $\SL{k}{n}$ to be defineable, or simply that $k$, wherever denoted, is an integer satisfying $k \geq \max{\{2, n\}}$.

The bulk of this section's work consists of the proof of the following proposition, which serves as the base case for our induction:

\begin{proposition}\label{bc1}
Let $k\geq2$. The graph $\SL{k}{1}$ is not the prime character degree graph of any solvable group.
\end{proposition}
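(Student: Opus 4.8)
The plan is to argue by contradiction: suppose $\SL{k}{1}=\Delta(G)$ for some solvable group $G$, choose $G$ of minimal order among all such groups, and contradict this by checking the hypotheses of Lemma \ref{lemma3}. Two structural observations come first. The vertex set of $\SL{k}{1}$ is covered by the two cliques $\rho(A)\cup\{c\}$ and $\rho(B)$ (the former is complete because $c$ is joined to every $a_i$), so any three vertices meet one clique in at least two vertices and therefore span an edge; hence $\SL{k}{1}$ satisfies P\'alfy's condition (Lemma \ref{PC}). Moreover $\SL{k}{1}$ has $2k+2\geq 6\geq 5$ vertices when $k\geq 2$, so the size requirement of Lemma \ref{lemma3} is met.

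Next I would produce the admissible vertices and assemble the four vertices demanded by Lemma \ref{lemma3}. The claim is that $a_1$ and $b_1$ are admissible. Deleting $b_1$ converts the cross edges $a_1\sim b_{k+1}$ and $a_i\sim b_i$ ($2\le i\le k$) into a perfect matching on $\rho(B)\setminus\{b_1\}$, with $c$ left unmatched, so the resulting graph is exactly $\Gamma_{k+1,k}$, which does not occur by Theorem \ref{KT}; deleting $a_1$ produces $\Gamma_{k+1,k}$ with a single matching edge removed, a connected proper subgraph excluded by Corollary \ref{subKT}. For part (ii) of admissibility the clique cover does the work: deleting any edge inside $\rho(B)$ creates the independent triple $\{b_i,b_j,c\}$ (since $c$ is adjacent to no vertex of $B$), and deleting an edge inside $\rho(A)\cup\{c\}$ creates an independent triple $\{a_i,a_j,b_m\}$ or $\{a_1,c,b_m\}$, so in all these cases P\'alfy's condition fails. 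The only incident edges whose deletion does not violate P\'alfy are the cross edges at $a_1$ and at $b_1$, and deleting any nonempty set of them yields a connected proper subgraph of $\Gamma_{k+1,k+1}$, again ruled out by Corollary \ref{subKT}. With $a_1$ and $b_1$ admissible I would invoke Lemma \ref{lemma3} with $a_2$ in the role of its vertex $a$ (adjacent to the admissible vertex $a_1$), with $b_2$ in the role of its vertex $b$ (not adjacent to $a_1$), and with $b_1$ as the second admissible vertex, not adjacent to $a_2$; these choices are distinct and satisfy every adjacency condition.

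It then remains to establish the group-theoretic hypotheses. Minimality of $|G|$ guarantees that $\Delta(G/N)$ is a proper subgraph for every nontrivial normal subgroup $N$, and the same reduction yields that each $\Delta(N)$ is a proper subgraph. Using the admissibility above together with Lemma \ref{strong} (and, for any vertex that requires it, Lemma \ref{pi} or Theorem \ref{newmain}) I would exclude a normal nonabelian Sylow subgroup; if a normal Sylow $p$-subgroup nonetheless occurred, Lemma \ref{lewisgraph} pins down the quotient graph on $\rho(G)\setminus\{p\}$ and one checks it cannot be realized. Finally I would verify that the Fitting subgroup $F$ is minimal normal in $G$, which is precisely the last hypothesis needed for Lemma \ref{lemma3} to deliver the contradiction.

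The step I expect to be the main obstacle is the admissibility bookkeeping created by the lone one-to-two edge $a_1\sim b_{k+1}$. Because of it, $\SL{k}{1}$ is not a subgraph of any $\Gamma_{k,t}$, and deleting a ``middle'' vertex $b_j$ with $2\le j\le k$ leaves the double edge intact: the result still satisfies P\'alfy's condition and has diameter two, so none of the fast tests eliminate it. Organizing the argument so that only $a_1$ and $b_1$ (whose deletions collapse cleanly to $\Gamma_{k+1,k}$) are needed avoids this, but at the price of treating the smallest case $k=2$ --- a six-vertex graph, where deleting $a_1\sim a_2$ does not break P\'alfy's condition --- separately, presumably via the six-vertex classification in \cite{BL2}. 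Showing that $F$ is minimal normal, and in particular dispatching the normal Sylow case through Lemma \ref{lewisgraph}, is the secondary difficulty.
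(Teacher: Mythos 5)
Your skeleton matches the paper's: take a minimal counterexample, rule out normal nonabelian Sylow subgroups vertex by vertex, show the Fitting subgroup is minimal normal, and finish with Lemma \ref{lemma3} (your assignment $a=a_2$, $b=b_2$, $c=a_1$, $d=b_1$ meets the adjacency requirements just as well as the paper's $a=b_1$, $b=b_2$, $c=a_1$, $d=a_2$). Your admissibility arguments for $a_1$ and $b_1$ are essentially sound and in places cleaner than the paper's: identifying $\SL{k}{1}$ minus $b_1$ with $\Gamma_{k+1,k}$, and the cross-edge deletions at $a_1$ with proper subgraphs of $\Gamma_{k+1,k+1}$, lets Theorem \ref{KT} and Corollary \ref{subKT} do work that the paper does with diameter-three estimates from Proposition \ref{sassresult}. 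But two genuine gaps remain, and they are where essentially all of the difficulty of the proposition lives.

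First, the non-normality of the Sylow subgroups for the vertices other than $a_1$ and $b_1$ --- for $b_2,\dots,b_k$ and, above all, for the special vertex $c$ --- is not a routine application of ``Lemma \ref{strong}, or Lemma \ref{pi}, or Theorem \ref{newmain} as needed.'' Lemma \ref{strong} requires \emph{strong} admissibility, which you never establish for any vertex, and Theorem \ref{newmain} requires verifying all five clauses of Hypothesis \ref{newhype} for $p=c$: clause \eqref{33} alone forces you to exclude every proper connected graph on $\{c\}\cup\pi^*\cup\rho$ for every nonempty $\pi^*\subseteq\{a_1,\dots,a_k\}$, and the subcase $\pi^*=\{a_1,a_2,\dots,a_i\}$ with $i<k$ needs a genuinely new argument (iterated normal Sylow subgroups in diameter-three quotients contradicting Proposition \ref{sassresult}\eqref{sass4}) that no amount of subgraph bookkeeping replaces. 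Likewise ``verify that $F$ is minimal normal'' is a substantial argument (decompose $F=N\times M$, analyze the four cliques that can contain $\rho(G)\setminus(\rho(G/N)\cap\rho(G/M))$, invoke Theorem 5.5 of \cite{L2} and Gallagher's theorem), and it consumes the admissibility of \emph{every} $a_i$ and of $b_{k+1}$, not just of $a_1$ and $b_1$; your proposal establishes neither the prerequisites nor the argument. Second, a smaller but real point: by using only P\'alfy's condition (independent triples) you lose the case $k=2$ with the single edge $a_1a_2$ deleted, and deferring to the six-vertex classification of \cite{BL2} is shaky, since that classification leaves some six-vertex graphs unresolved. The right tool is Theorem \ref{et}: the complement of $\SL{2}{1}$ minus $a_1a_2$ contains the $5$-cycle $a_1,a_2,b_1,c,b_2,a_1$, so the graph cannot occur; the same odd-cycle argument handles all the clique-edge deletions uniformly for every $k\geq2$, which is exactly how the paper proceeds.
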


We will be proving Proposition \ref{bc1} by contradiction: that is, we will assume $\SL{k}{1} = \Delta (G)$ for some solvable $G$ with $|G|$ minimal. The basic idea of the proof is to illustrate that the Fitting subgroup of $G$ is necessarily minimal normal and then use Lemma \ref{lemma3} to arrive at a contradiction. However, to be able to apply Lemma \ref{lemma3}, we need to first establish a series of technical lemmas. One can refer to Figure \ref{figLk1} for examples of graphs in the base case.

\begin{figure}[htb]
    \centering
$
\begin{tikzpicture}[scale=2]
\node (0a) at (0,.5) {$c$};
\node (1a) at (.5,.5) {$a_1$};
\node (11a) at (1.25,1) {$b_1$};
\node (22a) at (1.25,0) {$b_2$};
\path[font=\small,>=angle 90]
(0a) edge node [right] {$ $} (1a)
(11a) edge node [right] {$ $} (22a)
(1a) edge node [right] {$ $} (11a)
(1a) edge node [right] {$ $} (22a);
\node (0b) at (2,.5) {$c$};
\node (1b) at (2.5,1) {$a_1$};
\node (2b) at (2.5,0) {$a_2$};
\node (33b) at (3.25,.5) {$b_3$};
\node (11b) at (3.75,1) {$b_1$};
\node (22b) at (3.75,0) {$b_2$};
\path[font=\small,>=angle 90]
(0b) edge node [right] {$ $} (1b)
(0b) edge node [above] {$ $} (2b)
(1b) edge node [above] {$ $} (2b)
(11b) edge node [above] {$ $} (22b)
(11b) edge node [above] {$ $} (33b)
(22b) edge node [above] {$ $} (33b)
(1b) edge node [above] {$ $} (11b)
(2b) edge node [above] {$ $} (22b)
(1b) edge node [above] {$ $} (33b);
\node (0c) at (4.5,.5) {$c$};
\node (2c) at (5,1) {$a_2$};
\node (3c) at (5,0) {$a_3$};
\node (1c) at (5.5,.5) {$a_1$};
\node (11c) at (6.25,.75) {$b_1$};
\node (44c) at (6.25,.25) {$b_4$};
\node (22c) at (6.75,1) {$b_2$};
\node (33c) at (6.75,0) {$b_3$};
\path[font=\small,>=angle 90]
(0c) edge node [right] {$ $} (1c)
(0c) edge node [above] {$ $} (2c)
(0c) edge node [above] {$ $} (3c)
(1c) edge node [above] {$ $} (2c)
(1c) edge node [above] {$ $} (3c)
(2c) edge node [right] {$ $} (3c)
(11c) edge node [above] {$ $} (22c)
(11c) edge node [right] {$ $} (33c)
(11c) edge node [above] {$ $} (44c)
(22c) edge node [above] {$ $} (33c)
(22c) edge node [above] {$ $} (44c)
(33c) edge node [above] {$ $} (44c)
(1c) edge node [above] {$ $} (11c)
(2c) edge node [above] {$ $} (22c)
(3c) edge node [above] {$ $} (33c)
(1c) edge node [above] {$ $} (44c);
\node (1d) at (0,-1) {$c$};
\node (2d) at (.5,-.5) {$a_3$};
\node (3d) at (.5,-1.5) {$a_4$};
\node (4d) at (1,-.75) {$a_1$};
\node (5d) at (1,-1.25) {$a_2$};
\node (6d) at (2.25,-.5) {$b_3$};
\node (7d) at (2.25,-1.5) {$b_4$};
\node (8d) at (2.75,-.75) {$b_1$};
\node (9d) at (2.75,-1.25) {$b_2$};
\node (10d) at (1.75,-1) {$b_5$};
\path[font=\small,>=angle 90]
(1d) edge node [right] {$ $} (2d)
(1d) edge node [right] {$ $} (3d)
(1d) edge node [right] {$ $} (4d)
(1d) edge node [right] {$ $} (5d)
(2d) edge node [right] {$ $} (3d)
(2d) edge node [right] {$ $} (4d)
(2d) edge node [right] {$ $} (5d)
(3d) edge node [right] {$ $} (4d)
(3d) edge node [right] {$ $} (5d)
(4d) edge node [right] {$ $} (5d)
(6d) edge node [right] {$ $} (7d)
(6d) edge node [right] {$ $} (8d)
(6d) edge node [right] {$ $} (9d)
(6d) edge node [right] {$ $} (10d)
(7d) edge node [right] {$ $} (8d)
(7d) edge node [right] {$ $} (9d)
(7d) edge node [right] {$ $} (10d)
(8d) edge node [right] {$ $} (9d)
(8d) edge node [right] {$ $} (10d)
(9d) edge node [right] {$ $} (10d)
(2d) edge node [right] {$ $} (6d)
(3d) edge node [right] {$ $} (7d)
(4d) edge node [right] {$ $} (8d)
(5d) edge node [right] {$ $} (9d)
(4d) edge node [right] {$ $} (10d);
\node (1) at (3.5,-1) {$c$};
\node (2) at (4,-.5) {$a_2$};
\node (3) at (4,-1.5) {$a_3$};
\node (4) at (4.5,-.6) {$a_4$};
\node (5) at (4.5,-1.4) {$a_5$};
\node (6) at (5,-1) {$a_1$};
\node (7) at (5.75,-.8) {$b_1$};
\node (8) at (5.75,-1.2) {$b_6$};
\node (9) at (6.25,-.5) {$b_2$};
\node (10) at (6.25,-1.5) {$b_3$};
\node (11) at (6.75,-.6) {$b_4$};
\node (12) at (6.75,-1.4) {$b_5$};
\path[font=\small,>=angle 90]
(1) edge node [right] {$ $} (2)
(1) edge node [right] {$ $} (3)
(1) edge node [right] {$ $} (4)
(1) edge node [right] {$ $} (5)
(1) edge node [right] {$ $} (6)
(2) edge node [right] {$ $} (3)
(2) edge node [right] {$ $} (4)
(2) edge node [right] {$ $} (5)
(2) edge node [right] {$ $} (6)
(3) edge node [right] {$ $} (4)
(3) edge node [right] {$ $} (5)
(3) edge node [right] {$ $} (6)
(4) edge node [right] {$ $} (5)
(4) edge node [right] {$ $} (6)
(5) edge node [right] {$ $} (6)
(7) edge node [right] {$ $} (8)
(7) edge node [right] {$ $} (9)
(7) edge node [right] {$ $} (10)
(7) edge node [right] {$ $} (11)
(7) edge node [right] {$ $} (12)
(8) edge node [right] {$ $} (9)
(8) edge node [right] {$ $} (10)
(8) edge node [right] {$ $} (11)
(8) edge node [right] {$ $} (12)
(9) edge node [right] {$ $} (10)
(9) edge node [right] {$ $} (11)
(9) edge node [right] {$ $} (12)
(10) edge node [right] {$ $} (11)
(10) edge node [right] {$ $} (12)
(11) edge node [right] {$ $} (12)
(2) edge node [right] {$ $} (9)
(3) edge node [right] {$ $} (10)
(4) edge node [right] {$ $} (11)
(5) edge node [right] {$ $} (12)
(6) edge node [right] {$ $} (7)
(6) edge node [right] {$ $} (8);
\end{tikzpicture}
$
    \caption{Examples of graphs in the family $\{\SL{k}{1}\}$: $1\leq k\leq5$}
    \label{figLk1}
\end{figure}
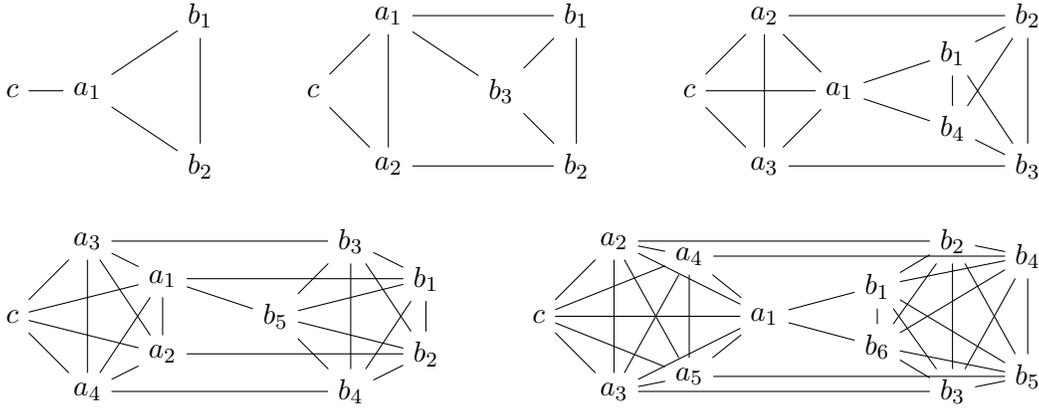

\subsection{Proof of Proposition \ref{bc1}} \label{lbaseproof}

Under the assumption that $\SL{k}{1} = \Delta(G)$ for some solvable $G$ with $|G|$ minimal, we will first show that there are no normal nonabelian Sylow subgroups associated to any of the vertices in $\Delta(G)$. Lemmas \ref{lemma<1}, \ref{lemma<2}, and \ref{lemma<3} handle the vertex sets $\{a_1, a_2, \ldots, a_k, b_1, b_{k+1}\}$, $\{b_2, \ldots, b_k\}$ and $\{c\}$, respectively. Finally, the Fitting subgroup of $G$ is addressed in Lemma \ref{minnorm}. 

\begin{lemma}\label{lemma<1}
Let $k\geq2$ and assume $\SL{k}{1}=\Delta(G)$ for some finite solvable group $G$, where $|G|$ is minimal. Then $G$ does not have a normal nonabelian Sylow $p$-subgroup for any $p\in\{a_1\}\cup\{a_2,\ldots,a_k\}\cup\{b_1,b_{k+1}\}$. In particular, $p$ is a strongly admissible vertex.
\end{lemma}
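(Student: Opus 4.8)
The plan is to establish that each listed vertex $p$ is strongly admissible and then invoke Lemma \ref{strong}. Since $|G|$ is minimal, for every proper normal subgroup $N$ we have $\cd(G/N)\subseteq\cd(G)$ with $|G/N|<|G|$, so $\Delta(G/N)$ is a proper subgraph of $\SL{k}{1}$; this is precisely the hypothesis required by Lemma \ref{strong}. Hence the whole statement reduces to the purely graph-theoretic claim that each $p\in\{a_1\}\cup\{a_2,\ldots,a_k\}\cup\{b_1,b_{k+1}\}$ is strongly admissible, after which Lemma \ref{strong} yields that a Sylow $p$-subgroup is not normal, and in particular $G$ has no normal nonabelian Sylow $p$-subgroup.

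To verify strong admissibility I would enumerate the connected subgraphs obtained by (i) deleting $p$ and its incident edges, (ii) deleting one or more edges incident to $p$, and (iii) deleting $p$, its incident edges, and one or more edges among the neighbors of $p$, and rule each one out by exactly one of three mechanisms. First, deletions that leave both blocks complete and the bridges injective produce either $\Gamma_{k+1,k}$ itself or a proper connected subgraph of $\Gamma_{k+1,k}$ or $\Gamma_{k+1,k+1}$ on the same vertex set; these fail to occur by Theorem \ref{KT} and Corollary \ref{subKT} (note $k+1\geq k\geq2$). Second, deleting an edge inside the complete block $\{c,a_1,\ldots,a_k\}$ or inside $B$ adds an edge to the complement; since the complement of $\SL{k}{1}$ is bipartite between the two blocks, any such addition creates an odd cycle (a triangle for large $k$, occasionally a longer odd cycle for small $k$), which is forbidden by Theorem \ref{et}. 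Third, deleting the bridge that is the unique link from some $b_i$ into $A$ raises the distance from $c$ to $b_i$ to three, producing a diameter-three graph to which Proposition \ref{sassresult} applies; any stray disconnected subgraph is eliminated by P\'alfy's condition, which forces each component to be complete.

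Carrying this out: the vertices $b_1$ and $b_{k+1}$ are interchangeable, each adjacent on the $A$-side only to $a_1$, so deleting either gives precisely $\Gamma_{k+1,k}$, deleting its bridge to $a_1$ gives $\Gamma_{k+1,k+1}$ with one bridge removed, and deleting any edge $b_{k+1}b_j$ inside $B$ creates the complement triangle $\{c,b_j,b_{k+1}\}$. For $a_1$, deleting the vertex yields $\Gamma_{k+1,k}$ with the bridge at $c$ removed, deleting either of its bridges yields $\Gamma_{k+1,k+1}$ minus a bridge, and deleting $a_1c$ or $a_1a_j$ produces a complement odd cycle. For $a_i$ with $i\geq2$, deleting $a_i$ (or its bridge $a_ib_i$) severs the only edge from $b_i$ into $A$, so taking $p=c$ and $q=b_i$ one finds $\rho_1\cup\rho_2$ of size $k$ (respectively $k+1$) while $\rho_3\cup\rho_4$ has size $k+1$, violating the bound $|\rho_3\cup\rho_4|\geq2^{|\rho_1\cup\rho_2|}$ of Proposition \ref{sassresult}; the within-block deletions at $a_i$ again give complement odd cycles. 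The extra neighbor-edge deletions needed for strong (as opposed to plain) admissibility fall under these same mechanisms.

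The main obstacle is the double bridge at $a_1$, which is adjacent to both $b_1$ and $b_{k+1}$: because of it, deleting $a_i$ for $i\geq2$ does not leave a $\Gamma_{k,t}$-type graph, so Corollary \ref{subKT} is unavailable and one is forced into the diameter-three analysis, where the delicate point is to identify the partition $\rho_1\cup\rho_2\cup\rho_3\cup\rho_4$ correctly and confirm that the exponential bound of Proposition \ref{sassresult} genuinely fails. A secondary nuisance is the small cases, especially $k=2$, where a within-block edge deletion may yield a five-cycle rather than a triangle in the complement; these must be checked by hand but are still eliminated by Theorem \ref{et}.
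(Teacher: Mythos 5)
Your proposal is correct and follows essentially the same strategy as the paper: verify strong admissibility of each listed vertex by ruling out the deleted subgraphs via Theorem \ref{KT}/Corollary \ref{subKT}, odd cycles in the complement (Theorem \ref{et}/P\'alfy), and the diameter-three bounds of Proposition \ref{sassresult}, then conclude with Lemma \ref{strong}. The only (harmless) divergence is that you dispatch the deletion of $a_1$ and of its bridges through Corollary \ref{subKT} applied to $\Gamma_{k+1,k}$ and $\Gamma_{k+1,k+1}$, where the paper instead runs those cases through the diameter-three analysis; both mechanisms are valid for those subgraphs.
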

\begin{proof}
First we consider the vertex $a_1$. If we remove $a_1$ and all incident edges, we are left with a graph that has diameter three, violating Proposition \ref{sassresult}\eqref{sass1} in the case that $k=2$ or \eqref{sass3} if $k \geq 3$. Next, removing the edge between $a_1$ and $a_i$ (with $2\leq i \leq k$) or the edge between $a_1$ and $c$ results in an odd cycle in the complement graph, which cannot happen due to Theorem \ref{et}. Finally, losing one or both of the edges between $a_1$ and $b_1$ or $a_1$ and $b_{k+1}$ again yields a graph with diameter three. In particular, $|\rho_1\cup\rho_2|=k+1$ and $|\rho_3\cup\rho_4|=k+1$, which will violate Proposition \ref{sassresult}\eqref{sass3}. Thus, $a_1$ is admissible.

To see that $a_1$ is in fact strongly admissible, we need to consider the graph obtained by removing $a_1$, all incident edges, and possibly the edges between $a_i$ and $a_j$, the edges between $a_i$ and $c$, and the edge between $b_1$ and $b_{k+1}$. Regardless, the resulting graph continues to be a connected graph with diameter three violating Proposition \ref{sassresult}. Hence $a_1$ is strongly admissible.

Next we consider the vertex $a_2$. If we remove $a_2$ and all incident edges, we are left with a graph which has diameter three. Notice that $\left|\rho_1\cup \rho_2\right|=k$ and $\left|\rho_3\cup\rho_4 \right|=k+1$. Thus this graph does not occur because it contradicts Proposition \ref{sassresult}\eqref{sass3}. Next, removing the edge between $a_2$ and $a_i$ (with $1\leq i \leq k$ and $i\neq2$) or the edge between $a_2$ and $c$ violates Theorem \ref{et}. Finally, losing the edge between $a_2$ and $b_2$ yields a graph with diameter three which violates Proposition \ref{sassresult}\eqref{sass3} since $\left|\rho_1\cup\rho_2\right|=k+1$ and $\left|\rho_3\cup\rho_4\right|=k+1$. Thus, $a_2$ is admissible.

To see that $a_2$ is strongly admissible, we need to consider the graph obtained by removing $a_2$, all incident edges, and possibly the edges between $a_i$ and $a_j$ and the edges between $a_i$ and $c$. Regardless, the resulting graph continues to be a connected graph with diameter three violating Proposition \ref{sassresult}. Hence $a_2$ is strongly admissible, and by symmetry so too are the vertices $a_3,\ldots,a_k$.

Finally we consider the vertex $b_1$. If we remove $b_1$ and all incident edges, we are left with the graph $\Gamma_{k+1,k}$. This is shown not to occur by Theorem \ref{KT}. Next, removing the edge between $b_1$ and $b_j$ (with $2 \leq j \leq k+1$) violates P\'alfy's condition with $b_1$, $b_j$, and $c$. Therefore, we cannot lose the edge between $b_1$ and $b_j$. Lastly, losing the edge between $b_1$ and $a_1$ was considered above. Thus, $b_1$ is admissible.

To see that $b_1$ is indeed strongly admissible, we need to consider the graph obtained by removing $b_1$, all incident edges, and possibly the edges between $b_i$ and $b_j$, and the edge between $b_{k+1}$ and $a_1$. Regardless, the resulting graph cannot occur due to either Theorem \ref{KT} or Corollary \ref{subKT}. Hence $b_1$ is strongly admissible, and by symmetry so is $b_{k+1}$.

Hence by Lemma \ref{strong}, we get that there is no normal nonabelian Sylow $p$-subgroup for any $p\in\{a_1\}\cup\{a_2,\ldots,a_k\}\cup\{b_1,b_{k+1}\}$.
\end{proof}

We next address the remaining vertices of the subgraph $B$ (as defined in the beginning of Section \ref{gfamilies}).

\begin{lemma}\label{lemma<2}
Let $k\geq2$ and assume $\SL{k}{1}=\Delta(G)$ for some finite solvable group $G$, where $|G|$ is minimal. Then $G$ does not have a normal nonabelian Sylow $q$-subgroup for any $q\in\{b_2,\ldots,b_k\}$.
\end{lemma}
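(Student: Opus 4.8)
The plan is to apply Lemma \ref{pi} directly, taking advantage of the fact that for each index $j$ with $2 \le j \le k$ the vertex $a_j$ has a unique neighbor in the subgraph $B$, namely the vertex $b_j$ itself. Since the indices $2, \dots, k$ play symmetric roles, it suffices to argue for a single generic $q = b_j$.

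Fix $j$ with $2 \le j \le k$ and set $q = b_j$. First I would read off the neighborhoods in $\SL{k}{1}$: the set of vertices adjacent to $b_j$ is $\pi = (\{b_1, \dots, b_{k+1}\} \setminus \{b_j\}) \cup \{a_j\}$, and the set of vertices nonadjacent to $b_j$ is $\rho = \{c\} \cup (\{a_1, \dots, a_k\} \setminus \{a_j\})$. Note that $\SL{k}{1} = \Delta(G)$ satisfies P\'alfy's condition, as required by Lemma \ref{pi}. I would then partition $\pi = \pi_1 \cup \pi_2$ with $\pi_1 = \{a_j\}$ and $\pi_2 = \{b_1, \dots, b_{k+1}\} \setminus \{b_j\}$. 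The essential verification is that no vertex of $\pi_1$ is adjacent to any vertex of $\pi_2$: this is exactly where the hypothesis $j \ge 2$ enters, since for such $j$ the only neighbor of $a_j$ inside $B$ is $b_j$, which lies outside $\pi_2$. This is precisely the property that fails for $b_1$ and $b_{k+1}$, where the one-to-two edge forces $a_1$ to meet two vertices of $B$; those cases were therefore treated by a different argument in Lemma \ref{lemma<1}.

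With the partition fixed, I would supply the witnesses demanded by Lemma \ref{pi}. Take $v = b_1 \in \pi_2$ and $s = a_1$. Then $a_1 \in \rho$ because $j \ge 2$, the vertices $b_1$ and $a_1$ are adjacent in $\SL{k}{1}$, and $a_1$ is admissible by Lemma \ref{lemma<1}; thus $v$ is adjacent to the admissible vertex $s \in \rho$. For the remaining hypothesis, take $w = c \in \rho$, which is not adjacent to $v = b_1$ since $c$ is adjacent only to vertices of $A$. Finally, the minimality of $|G|$ furnishes the standing assumption of Lemma \ref{pi}: for every proper normal subgroup $N$ of $G$, the graph $\Delta(N)$ is a subgraph of $\Delta(G)$, and it must be a \emph{proper} subgraph, since otherwise $N$ would be a strictly smaller solvable group realizing $\SL{k}{1}$, contradicting minimality. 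Lemma \ref{pi} then gives that a Sylow $b_j$-subgroup of $G$ is not normal, which in particular precludes a normal nonabelian Sylow $b_j$-subgroup.

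I do not expect a serious obstacle in this lemma; the only point needing genuine care is the choice of the partition $\pi = \pi_1 \cup \pi_2$, and specifically the check that the potential cross-edge from $a_j$ into $B$ has been removed along with $q = b_j$. Once the adjacencies of $\SL{k}{1}$ are recorded and the admissibility of $a_1$ is imported from Lemma \ref{lemma<1}, the conclusion is immediate.
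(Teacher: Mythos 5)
Your proposal is correct and follows essentially the same route as the paper: the paper applies Lemma \ref{pi} to $q=b_2$ with exactly the same choices $\pi_1=\{a_2\}$, $\pi_2=\{b_1,b_3,\ldots,b_{k+1}\}$, $v=b_1$, $s=a_1$, $w=c$, importing the admissibility of $a_1$ from Lemma \ref{lemma<1}, and then invokes symmetry for the remaining $b_j$. Your only (harmless) difference is running the argument for a generic index $j$ rather than for $j=2$ plus symmetry.
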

\begin{proof}
We shall show this for $q=b_2$. Following the notation of Lemma \ref{pi}, we have $\pi=\{a_2,b_1,\ldots,b_{k+1}\}$ and $\rho=\{c,a_1,a_3,\ldots,a_k\}$. We further take $\pi_1=\{a_2\}$ and $\pi_2=\{b_1,b_3,\ldots,b_{k+1}\}$, as well as $v=b_1$, $s=a_1$, and $w=c$. Observe $a_1$ was shown to be admissible in Lemma \ref{lemma<1}. Thus, by Lemma \ref{pi}, $G$ has no normal nonabelian Sylow $b_2$-subgroup.

One can replicate this argument for all remaining $q\in \{b_3,\ldots,b_k\}$ by symmetry, whence the result.
\end{proof}

Finally we address the special vertex $c$, which requires employment of the technical hypothesis found in \cite{BLL}.

\begin{lemma}\label{lemma<3}
Let $k\geq2$ and assume $\SL{k}{1}=\Delta(G)$ for some finite solvable group $G$, where $|G|$ is minimal. Then $G$ does not have a normal nonabelian Sylow $c$-subgroup.
\end{lemma}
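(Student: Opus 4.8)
The plan is to invoke Theorem \ref{newmain}, so the whole task is to verify Hypothesis \ref{newhype} for the vertex $p = c$. Since $|G|$ is minimal, for every proper normal subgroup $N$ the graph $\Delta(N)$ is a proper subgraph of $\Delta(G)$, which is the standing assumption of that theorem. For $p = c$ the neighbor set is $\pi = \{a_1, \ldots, a_k\}$ and the non-neighbor set is $\rho = \{b_1, \ldots, b_{k+1}\}$. Conditions \eqref{11} and \eqref{22} are immediate adjacency checks using $k \geq 2$: each $b_j$ fails to be adjacent to $a_2$ (or to $a_1$ when $j = 2$), and each $a_i$ fails to be adjacent to $b_2$ (or to $b_1$ when $i = 2$).

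For condition \eqref{33}, admissibility of every vertex of $\pi$ is exactly what Lemma \ref{lemma<1} provides. The second half asks that no proper connected subgraph on $\{c\} \cup \pi^* \cup \rho$ occur. The key structural point is that the cross edges force any complete subgraph meeting both $\pi$ and $\rho$ to intersect $\pi$ in a \emph{single} vertex, so the relevant sets are $\{c, a_i\} \cup \rho$. Each of these has diameter three, and taking $p = c$ in the partition of Proposition \ref{sassresult} yields $|\rho_3| \leq 2$, contradicting Proposition \ref{sassresult}\eqref{sass1}; deleting further edges only lengthens distances, so no connected subgraph on that vertex set survives. Condition \eqref{55} is handled by observing that the only way to partition $\rho(G)$ into two cliques of $\SL{k}{1}$ is $\{c\} \cup A$ together with $B$, each of size $k+1$; the resulting disconnected graph violates P\'alfy's inequality since $k + 1 < 2^{k+1} - 1$, which is precisely the behavior demanded by \eqref{55}.

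Condition \eqref{44} is the bookkeeping core. I would enumerate the complete subgraphs $\pi^* \cup \rho^*$: up to symmetry these are $\{a_i, b_i\}$ for $2 \leq i \leq k$, together with $\{a_1, b_1\}$, $\{a_1, b_{k+1}\}$, and $\{a_1, b_1, b_{k+1}\}$. Computing the maximal cliques containing each gives $\tau = \varnothing$ in the first and last cases and $\tau = \{b_{k+1}\}$ or $\tau = \{b_1\}$ in the two middle cases; admissibility of these single vertices is again Lemma \ref{lemma<1}. The non-occurrence requirement then reduces to three claims: that no proper connected subgraph occurs on $\rho(G)$ (when $\tau^* = \varnothing$), on $\rho(G) \setminus \{b_{k+1}\}$, or on $\rho(G) \setminus \{b_1\}$. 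In the latter two cases the induced subgraph is exactly $\Gamma_{k+1,k}$, so Theorem \ref{KT} together with Corollary \ref{subKT} dispatches every connected subgraph on those vertex sets simultaneously.

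The main obstacle is the remaining claim that no \emph{proper} connected subgraph on the full vertex set $\rho(G)$ occurs, since this must rule out arbitrary edge deletions. I would split on the type of deleted edge. Deleting any edge internal to the cliques $\{c\} \cup A$ or $B$ (a $c$--$a_i$, $a_i$--$a_j$, or $b_i$--$b_j$ edge) creates an odd cycle in the complement graph, typically a triangle such as $\{b_i, b_j, c\}$ or $\{c, a_i, b_m\}$ and a longer odd cycle in the few small exceptional cases, contradicting Theorem \ref{et}; this persists under any further deletion. The only other connected subgraphs delete solely cross edges while keeping $\{c\} \cup A$ and $B$ complete, so with $p = c$ one has $|\rho_1 \cup \rho_2| = k+1$ and $|\rho_3 \cup \rho_4| = k+1$, violating Proposition \ref{sassresult}\eqref{sass3} because $k + 1 < 2^{k+1}$; deleting every cross edge instead disconnects the graph and is absorbed into \eqref{55}. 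With all five conditions verified, Theorem \ref{newmain} gives that $G$ has no normal nonabelian Sylow $c$-subgroup.
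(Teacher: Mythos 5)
Your overall strategy---verify Hypothesis \ref{newhype} for $p=c$ and then apply Theorem \ref{newmain}---is exactly the paper's, and your treatments of conditions \eqref{11}, \eqref{22}, \eqref{44}, \eqref{55}, and of the full vertex set $\rho(G)$ track the paper's proof closely. The gap is in condition \eqref{33}. You restrict the vertex sets $\{c\}\cup\pi^*\cup\rho$ to those with $\pi^*$ a singleton, on the grounds that any complete subgraph meeting both $\pi$ and $\rho$ meets $\pi$ in a single vertex. But that completeness constraint belongs to condition \eqref{44}, where $\pi^*$ is paired with a nonempty $\rho^*$; in condition \eqref{33} there is no $\rho^*$ at all, and $\pi^*$ ranges over \emph{all} nonempty subsets of $\pi=\{a_1,\ldots,a_k\}$. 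The paper's verification accordingly runs through four families: $\{a_1\}$, $\{a_2,\ldots,a_i\}$, $\{a_1\}\cup\{a_2,\ldots,a_i\}$, and $\pi$ itself. Your argument covers only the first (and, via your discussion of $\rho(G)$, the last), so for $k\geq3$ the intermediate cases are simply absent.

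Those missing cases are not routine. For $\pi^*=\{a_1,a_2,\ldots,a_i\}$ with $2\leq i\leq k-1$, the graph on $\{c\}\cup\pi^*\cup\rho$ is connected of diameter three but does not obviously violate parts \eqref{sass1}--\eqref{sass3} of Proposition \ref{sassresult}; the paper instead applies part \eqref{sass4} twice, obtaining a normal Sylow $p_1$-subgroup $P_1$ with $p_1\in\rho_3$, passing to $H/P_1'$ via Lemma \ref{lewisgraph} to obtain a second normal Sylow $p_2$-subgroup, and deriving a contradiction with the uniqueness statement in Proposition \ref{sassresult}\eqref{sass4}. Nothing in your proposal substitutes for this argument, so the verification of Hypothesis \ref{newhype}\eqref{33}---and hence the application of Theorem \ref{newmain}---is incomplete as written.
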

\begin{proof}
We will show that $\SL{k}{1}$ satisfies Hypothesis \ref{newhype} for all $k\geq2$ with $p=c$. Following the conventional notation, notice that $\pi=\{a_1,\ldots,a_k\}$ and $\rho=\{b_1,\ldots,b_{k+1}\}$.

For \eqref{11}, for each $b_i$ ($1\leq i\leq k$), notice that $a_j\in\pi$ ($1\leq j \leq k$ and $j\neq i$) is nonadjacent to $b_i$. For $b_{k+1}$, observe that $a_2$ is nonadjacent. Thus, \eqref{11} is satisfied.

For \eqref{22}, for each $a_i$ ($1\leq i\leq k$), notice that $b_j\in\rho$ ($1\leq j\leq k$ and $j\neq i$) is nonadjacent to $a_i$. Hence, \eqref{22} is satisfied.

For \eqref{33} we must start by showing that all vertices in $\pi$ are admissible. This was done in Lemma \ref{lemma<1}.
Next we must show that no proper connected graph with vertex $\{p\}\cup\pi^*\cup\rho$ occurs as the prime character degree graph of any solvable group. Again observe by Lemma \ref{lemma<1} that the vertices $a_2,\ldots,a_k$ are symmetric. Without loss of generality, set $\pi_i^*:=\{a_2,\ldots,a_i\}$ for $2\leq i\leq k$. Therefore, possible subsets of $\pi^*$ are the following: (a) $\{a_1\}$, (b) $\pi_i^*$, (c) $\{a_1\}\cup\pi_i^*$, and (d) $\pi$.

For (a), this leaves us with a graph of diameter three such that $|\rho_3|\leq2$. This contradicts Proposition \ref{sassresult}\eqref{sass1}. 

For (b), we are left with a graph with diameter three which will violate Proposition \ref{sassresult}\eqref{sass3}.

For (c), we need only consider $\{a_1\}\cup\pi_i^*$ for $2\leq i\leq k-1$ since the case for $i=k$ is considered in (d) below. When $\pi^*=\{a_1\}\cup\pi_i^*$, we have a connected graph of diameter three. Denote this graph as $\Delta(H)$ where we suppose $H$ is a subgroup of $G$. Observe that Proposition \ref{sassresult}\eqref{sass2} forces $\rho_3=\{b_1,\ldots,b_i,b_{k+1}\}$, and then by Proposition \ref{sassresult}\eqref{sass4} there exists a normal Sylow $p_1$-subgroup for exactly one prime $p_1\in\rho_3$. Set this normal Sylow $p_1$-subgroup as $P_1$. The resulting graph $\Delta(H/P_1')$, as per Lemma \ref{lewisgraph}, has vertex set $\rho(H/P_1')=\rho(H)\setminus\{p_1\}$ and is obtained from the graph of $\Delta(H)$ by removing the vertex $p_1$, all incident edges, and possibly edges between vertices adjacent to $p_1$. Regardless of the $p_1$, the graph $\Delta(H/P_1')$ is connected and has diameter three. Now we note that for $\Delta(H/P_1')$, Proposition \ref{sassresult}\eqref{sass2} gives $\rho_3=\{b_1,\ldots,b_i,b_{k+1}\}\setminus\{p_1\}$, and by Proposition \ref{sassresult}\eqref{sass4} we know that $H/P_1'$ has a normal Sylow $p_2$-subgroup for exactly one prime $p_2\in\rho_3$. Call this normal Sylow subgroup $P_2$. Notice that one can show that $P_1P_2=P_1\times P_2$ is normal in $H$, which also shows that $P_2$ is a normal Sylow $p_2$-subgroup of $H$, where $p_2\in\rho_3$. This contradicts Proposition \ref{sassresult}\eqref{sass4} since we now have two such subgroups: $P_1$ and $P_2$.

For (d), removing an edge between two vertices in $\rho$ or two vertices in $\pi$ would violate Theorem \ref{et}. Therefore these edges cannot be lost. If we remove the edge between $a_1$ and $b_1$, or $a_1$ and $b_{k+1}$, or any combination of edges which span from $a_i$ to $b_i$ ($2\leq i\leq k$), this leaves us with a diameter three graph which fails to occur as it contradicts Proposition \ref{sassresult}\eqref{sass3} since $\left|\rho_1\cup\rho_2\right|=k+1$ and $\left|\rho_3\cup\rho_4\right|=k+1$.

Thus, no proper connected subgraph with vertex set $\{p\}\cup\pi^*\cup\rho$ occurs as the prime character degree graph of any solvable group, and \eqref{33} is satisfied.

For \eqref{44}, notice the only cases where $\tau\neq\varnothing$ is when $\pi^*\cup\rho^*$ is one of the following: (a) $\{a_1\}\cup\{b_1\}$, or (b) $\{a_1\}\cup\{b_{k+1}\}$.

For (a), we would have that $\tau = \{b_{k+1}\}$. Thus, we must show that $b_{k+1}$ is admissible, which was shown above in Lemma \ref{lemma<1}. We shall then show that no proper connected subgraph with the vertex set $\rho(G)\setminus\tau^* = \rho(G)\setminus\{b_{k+1}\}$ occurs as $\Delta(G)$. If we remove $b_{k+1}$ and all incident edges, we are left with a (proper) connected subgraph of $\Gamma_{k+1,k}$ with the same vertex set, which is shown not to occur by either Theorem \ref{KT} or Corollary \ref{subKT}.

For (b), one can employ a symmetric argument as above, and therefore \eqref{44} is satisfied.

For \eqref{55}, a disconnected subgraph with vertex set $\rho(G)$ must have one component with vertex set $\{p\}\cup\pi$ and the other $\rho$. Notice that each component has order $k+1$. Observe $k+1<2^{k+1}-1$ for all $k\geq2$. Thus, this violates P\'alfy's inequality from \cite{P2} and \eqref{55} is satisfied.

Hence, the graph $\SL{k}{1}$ satisfies Hypothesis \ref{newhype}, and therefore $G$ has no normal nonabelian Sylow $c$-subgroup by way of Theorem \ref{newmain}.
\end{proof}

Notice that we have now shown that $\SL{k}{1}$ has no normal nonabelian Sylow $p$-subgroups for all $k\geq2$. Letting $F$ be the Fitting subgroup of $G$, we note that $\rho(G)=\pi(|G:F|)$. Therefore, $\rho(G)=\rho(G/\Phi(G))$, where $\Phi(G)$ is the Frattini subgroup of $G$. However, Lemma \ref{lemma<3} verifies Hypothesis \ref{newhype}\eqref{33} that there is no proper connected subgraph of $\Delta(G)$ that occurs, and also verifies Hypothesis \ref{newhype}\eqref{55} that the disconnected subgraph does not occur. Finally, since $|G|$ is minimal, we must have that $\Phi(G)=1$. We then apply Lemma III 4.4 of \cite{H} to see that there exists a subgroup $H$ of $G$ such that $G=HF$ and $H\cap F=1$.

\begin{lemma}\label{minnorm}
Let $k\geq2$ and assume $\SL{k}{1}=\Delta(G)$ for some finite solvable group $G$, where $|G|$ is minimal. Then the Fitting subgroup of $G$, denoted $F$, is minimal normal in $G$.
\end{lemma}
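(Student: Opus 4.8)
The plan is to exploit the structural facts just assembled: $\Phi(G)=1$, the complement decomposition $G=HF$ with $H\cap F=1$, and $\rho(G)=\pi(\abs{G:F})$. First I would record that $\Phi(F)\le\Phi(G)=1$, so the nilpotent group $F$ has trivial Frattini subgroup and is therefore a direct product of elementary abelian groups; in particular $F$ is abelian, and since the Fitting subgroup of a solvable group is self-centralizing, $F=C_G(F)$. Because $H\cap F=1$ and $C_G(F)=F$, the complement $H\cong G/F$ acts faithfully on $F$, so $G=F\rtimes H$ is an internal semidirect product with $F$ a faithful $H$-module that is completely reducible (complete reducibility again coming from $\Phi(G)=1$, which makes $F$ the socle of $G$). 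Thus $F=M_1\times\cdots\times M_r$, where each $M_i$ is minimal normal in $G$, i.e.\ an irreducible $H$-submodule and an elementary abelian $p_i$-group. The lemma is then equivalent to the assertion $r=1$, and I would assume $r\ge2$ toward a contradiction.

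Next I would translate the module data into character degrees. Since $F$ is abelian and $G=F\rtimes H$, the little-group (Clifford--Gallagher) correspondence describes $\Irr(G)$ via the $H$-orbits on $\widehat F=\widehat{M_1}\times\cdots\times\widehat{M_r}$: a linear character $\lambda=(\mu_1,\dots,\mu_r)$ has inertia complement $H_\lambda=\bigcap_i H_{\mu_i}$, the character $\lambda$ extends to $FH_\lambda$ because the extension splits, and the irreducible characters of $G$ lying over $\lambda$ have degrees $\abs{H:H_\lambda}\,\psi(1)$ with $\psi\in\Irr(H_\lambda)$. Taking $\lambda=1$ recovers $\cd(H)\subseteq\cd(G)$, while taking $\mu_i$ supported on a single summand yields exactly the characters inflated from the quotient $M_i\rtimes H\cong G/\prod_{j\ne i}M_j$. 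By minimality of $\abs G$, every proper quotient has $\Delta(G/N)$ a proper subgraph of $\SL{k}{1}$; in particular, for $r\ge 2$ each subgroup $\prod_{j\ne i}M_j$ is proper and nontrivial, so each $\Delta\bigl(G/\prod_{j\ne i}M_j\bigr)$ omits at least one edge or vertex of $\SL{k}{1}$.

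I would then argue that $r\ge2$ is incompatible with $\Delta(G)=\SL{k}{1}$. The characters not inflated from any single $G/\prod_{j\ne i}M_j$ are precisely the \emph{mixed} ones, whose linear part $\lambda$ is nontrivial on at least two summands; their orbit sizes $\abs{H:H_{\mu_1}\cap H_{\mu_2}}$ force adjacencies between primes arising from two distinct faithful actions of $H$. The aim is to show that these forced adjacencies, together with the rigid shape of $\SL{k}{1}$ (two halves each inducing a complete graph on $k+1$ vertices, joined by the one-to-one/one-to-two edges, and the one-sided vertex $c$ adjacent to all of $A$ but to no vertex of $B$), cannot be reconciled unless one summand is absent. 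Concretely, I would track which vertices survive in each $\Delta\bigl(G/\prod_{j\ne i}M_j\bigr)$, use $\rho(G)=\pi(\abs H)$ to pin the primes of $H$ against the two halves, and show the only configuration consistent with the absence of any normal Sylow subgroup for a vertex collapses to $r=1$.

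The hard part will be controlling the mixed characters: unlike the orbit-size primes, the stabilizer degrees $\psi(1)$ for $\psi\in\Irr(H_{\mu_1}\cap H_{\mu_2})$ need not divide any character degree of a single quotient $G/\prod_{j\ne i}M_j$, so one cannot simply assert $\rho(G)=\bigcup_i\rho\bigl(G/\prod_{j\ne i}M_j\bigr)$. Overcoming this requires showing that, in order for $\Delta(G)$ to be the diameter-two graph $\SL{k}{1}$ with its rigid complete halves and one-sided vertex $c$, a faithful $H$-action on a reducible $F$ would necessarily create an edge or a vertex incompatible with that structure. This is the step I expect to carry the real weight of the argument.
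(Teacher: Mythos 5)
There is a genuine gap: your proposal sets up the right framework but explicitly defers the step that actually proves the lemma. Reducing to $F=M_1\times\cdots\times M_r$ with each $M_i$ minimal normal and aiming for $r=1$ is equivalent to the paper's starting point (suppose $1<N<F$ is normal and write $F=N\times M$ via Huppert's Theorem III 4.5), and your observations about $\Phi(G)=1$, complete reducibility, and $C_G(F)=F$ are all fine. But the contradiction you need is precisely the content you label ``the step I expect to carry the real weight of the argument,'' and the Clifford-theoretic route you sketch for it --- tracking orbit sizes and stabilizer degrees of mixed characters $\lambda=(\mu_1,\dots,\mu_r)$ --- is not carried out and would be substantially harder than what is actually needed. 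Note also that your stated obstruction is misplaced: the identity $\rho(G)=\rho(G/N)\cup\rho(G/M)$ \emph{can} be asserted directly, since for $q\in\rho(G)\setminus\rho(G/N)$ the quotient $G/N$ has a normal abelian Sylow $q$-subgroup, the class of such groups is a formation, and $G$ embeds in $G/N\times G/M$; so $q$ must lie in $\rho(G/M)$.

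The missing idea is what to do with the primes that are \emph{not} common to both quotients. Writing $G=HF$ with $H\cap F=1$ and letting $E$ be the Fitting subgroup of $H$, any $q\in\rho(G)\setminus\rho(G/N)$ satisfies $q\notin\rho(G/F)=\rho(H)$, so the Sylow $q$-subgroup of $H$ lies in $E$; since $\cd(G)$ contains a degree divisible by every prime divisor of $|EF:F|=|E|$, the whole set $\rho(G)\setminus(\rho(G/N)\cap\rho(G/M))$ lies in a single clique of $\Delta(G)$. The maximal cliques of $\SL{k}{1}$ are exactly $\{c\}\cup\{a_1,\dots,a_k\}$, $\{a_i,b_i\}$ for $2\leq i\leq k$, $\{a_1,b_1,b_{k+1}\}$, and $\{b_1,\dots,b_{k+1}\}$, and each case is then eliminated by combining the admissibility results already proved (which give $O^{p}(G)=G$ for the relevant vertices), Lewis's theorem that a two-component degree graph forces a central Sylow subgroup in the quotient, the non-occurrence of the proper connected subgraphs established in verifying Hypothesis \ref{newhype}, and (for the clique $\rho=\{b_1,\dots,b_{k+1}\}$) a Gallagher-type argument showing $G/FE$ is abelian. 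Without this clique reduction and the ensuing case analysis, your outline does not constitute a proof.
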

\begin{proof}
Following the notation of Lemma \ref{lemma<3}, we set $p=c$, $\pi=\{a_1,\ldots,a_k\}$, and $\rho=\{b_1,\ldots,b_{k+1}\}$. Let $F$ be the Fitting subgroup of $G$ and let $E$ be the Fitting subgroup of $H$, where $H$ is as above.

We proceed by contradiction; suppose there exists a normal subgroup $N$ of $G$ such that $1<N<F$. By Theorem III 4.5 of \cite{H}, there exists a normal subgroup $M$ of $G$ such that $F=N\times M$. Furthermore, because both $N$ and $M$ are nontrivial, we must have $\rho(G/N)\subset\rho(G)$ and $\rho(G/M)\subset\rho(G)$. For any vertex $q\in\rho(G)\setminus\rho(G/N)$, we know that $G/N$ must have a normal abelian Sylow $q$-subgroup. The class of finite groups with a normal abelian Sylow $q$-subgroup is a formation, and so $q$ must lie in $\rho(G/M)$. Hence, $\rho(G)=\rho(G/N)\cup\rho(G/M)$.

Following the arguments in \cite{BL} or \cite{L}, one considers $q\in\rho(G)\setminus\rho(G/N)$. Then $q\notin\rho(G/F)=\rho(H)$, and so $E$ must contain the Sylow $q$-subgroup of $H$. Since $q\in\rho(G)$, it then follows that $q$ divides $|H|$, and therefore $q$ divides $|E|$ as well. Since $\cd(G)$ contains a degree divisible by all the prime divisors of $|EF:F|=|E|$, one gleans that $\rho(G)\setminus(\rho(G/N)\cap\rho(G/M))$ lies in a complete subgraph of $\Delta(G)$. Thus $\rho(G)\setminus(\rho(G/N)\cap\rho(G/M))$ must lie in one of the following subsets: (a) $\{p\}\cup\pi$, (b) $\{a_i,b_i\}$ for $2\leq i\leq k$, (c) $\{a_1,b_1,b_{k+1}\}$, or (d) $\rho$.

We start by supposing that (a) occurs. This implies that $\rho\subseteq\rho(G/N)\cap\rho(G/M)$. Since $\rho(G/N)\cup\rho(G/M)=\rho(G)$, we have that $p$ must belong to at least one of $\rho(G/N)$ or $\rho(G/M)$. Without loss of generality, say $p\in\rho(G/N)$ and therefore $\{p\}\cup\rho\subseteq\rho(G/N)$. There are then two cases to consider for $\rho(G/N)$: (i) $\{p\}\cup\rho$ or (ii) $\{p\}\cup\pi^*\cup\rho$, where $\pi^*$ is a nonempty proper subset of $\pi$. For (i), if $\{p\}\cup\rho=\rho(G/N)$, then the only possible graph arising is the disconnected graph with components $\{p\}$ and $\rho$ as the vertex sets. By Theorem 5.5 from \cite{L2}, we have that $G/N$ has a central Sylow $a_i$-subgroup for some $1\leq i\leq k$. This implies that $O^{a_i}(G)<G$, which is a contradiction since $O^{a_i}(G)=G$ for all such $i$ since $a_i$ is admissible. This leaves us with (ii), and so we must have $\{p\}\cup\pi^*\cup\rho=\rho(G/N)$. By Lemma \ref{lemma<3}, we know that no connected graph with vertex set $\{p\}\cup\pi^*\cup\rho$ occurs (since Hypothesis \ref{newhype}\eqref{33} was satisfied). Therefore, the only option is for a disconnected graph to occur. Again by Theorem 5.5 from \cite{L2}, $G/N$ has a central Sylow $a_i$ subgroup for some $1\leq i\leq k$, where $a_i\in\pi\setminus\pi^*$. We then have $O^{a_i}(G)<G$ which is a contradiction as $a_i$ is admissible and $O^{a_i}(G)=G$.

Suppose (b) occurs. This implies that $\rho(G)\setminus\{a_i,b_i\}\subseteq\rho(G/N)\cap\rho(G/M)$. Similar to the above, we can let $b_i\in\rho(G/N)$ without loss of generality. Since $\rho(G/N)$ and $\rho(G/M)$ are both proper in $\rho(G)$, we know that $a_i\notin\rho(G/N)$. In particular, we have that $\rho(G/N)=\rho(G)\setminus\{a_i\}=\{p\}\cup\pi^*\cup\rho$ for the specific $\pi^*=\pi\setminus\{a_i\}$. As above, we know that no connected graph with this vertex set occurs, and therefore the only option is for a disconnected graph to occur. Again by Theorem 5.5 from \cite{L2}, we have that $G/N$ has a central Sylow $a_i$-subgroup and thus $O^{a_i}(G)<G$. This is a contradiction since $O^{a_i}(G)=G$ because $a_i$ is admissible.

Next we suppose that (c) occurs. This implies that $\rho(G)\setminus\{a_1,b_1,b_{k+1}\}\subseteq\rho(G/N)\cap\rho(G/M)$. Without loss of generality, we can say that $a_1\in\rho(G/N)$, and this then gives us three cases for $\rho(G/N)$, which again we know is proper in $\rho(G)$: (i) $\rho(G)\setminus\{b_1,b_{k+1}\}$, (ii) $\rho(G)\setminus\{b_1\}$, or (iii) $\rho(G)\setminus\{b_{k+1}\}$. For (i), we then know that $\rho(G/M)=\rho(G)\setminus\{a_1\}$ because $\rho(G)=\rho(G/N)\cup\rho(G/M)$. Indeed $\rho(G/M)=\{p\}\cup\pi^*\cup\rho$ for $\pi^*=\pi\setminus\{a_1\}$. Again by Lemma \ref{lemma<3} and Hypothesis \ref{newhype}\eqref{33}, we know that no connected graph with that vertex set can occur. Considering the disconnected graph with that vertex set, we can apply Theorem 5.5 from \cite{L2} to conclude that $G/M$ has a central Sylow $a_1$-subgroup. This implies $O^{a_1}(G)<G$ which is a contradiction since $a_1$ is admissible. For (ii), if $\rho(G/N)=\rho(G)\setminus\{b_1\}$, then in fact $\rho(G/N)=\rho(G)\setminus\tau^*$ for $\tau=\{b_1\}$. By Lemma \ref{lemma<3} and Hypothesis \ref{newhype}\eqref{44}, we know that no connected graph with this vertex set occurs as the prime character degree graph of any solvable group. For the disconnected graph, we again employ Theorem 5.5 from \cite{L2} to get that $G/N$ has a central Sylow $b_1$-subgroup. Hence $O^{b_1}(G)<G$. However $b_1$ was shown to be admissible in Lemma \ref{lemma<1} and therefore $O^{b_1}(G)=G$, a contradiction. The argument for (iii) is symmetric to that of (ii).

Finally suppose (d) occurs. We follow the argument in \cite{Lewis} in a generalized form. We get that $E$ has a Hall $\rho$-subgroup of $H$. Since it is known that $\cd(G)$ has a character degree divisible by all the primes dividing $|E|$, we get that $|E|$ is divisible by only those primes in $\rho$. Therefore, $E$ is the Hall $\rho$-subgroup of $H$. Next we find a character $\chi\in\Irr(G)$ such that all the primes in $\rho$ divide $\chi(1)$. Next, taking $\theta$ as an irreducible constituent of $\chi_{FE}$, we observe that $\chi(1)/\theta(1)$ divides $|G:FE|$ and $\chi(1)$ is relatively prime to $|G:FE|$. We conclude that $\chi_{FE}=\theta$. Next, since all the primes in $\rho$ divide $\theta(1)$, and the only possible prime divisors of $d\in\cd(G/FE)$ are those primes in $\{p\}\cup\pi$, we apply Gallagher's Theorem to conclude that $\cd(G/FE)=\{1\}$ and $G/FE$ is abelian. We now have that $O^{a_i}(G)<G$ for $1\leq i\leq k$, which is a contradiction since $a_i$ was shown to be admissible in Lemma \ref{lemma<1} and therefore $O^{a_i}(G)=G$.

Thus, no such $N$ can occur, and therefore we get that $F$ is minimal normal in $G$, which was what we wanted.
\end{proof}

We are finally able to finish the proof of Proposition \ref{bc1}:

\begin{proof}
For the sake of contradiction, suppose that $G$ is a counterexample with $|G|$ minimal such that $\Delta(G)=\SL{k}{1}$. Observe that the Fitting subgroup of $G$ is minimal normal in $G$ by Lemma \ref{minnorm}. Next, notice that the hypotheses from Lemma \ref{lemma3} are satisfied by taking $a=b_1$, $b=b_2$, $c=a_1$, and $d=a_2$, where we observe that $a_1$ and $a_2$ are admissible by Lemma \ref{lemma<1}. Applying Lemma \ref{lemma3} yields our contradiction, and therefore the graph $\SL{k}{1}$ is not the prime character degree graph of any solvable group. This completes the base case.
\end{proof}

\subsection{Proof of Theorem \ref{firstfam}}

Recall that $k\geq\max{\{2,m\}}$. Next assume our inductive hypothesis below:

\begin{hyp}\label{inhype}
Given any integer $m\geq1$, we assume that the graph $\SL{k}{m}$ does not occur as the prime character degree graph of any solvable group.
\end{hyp}

We then proceed to the inductive step:

\begin{proposition}\label{instep}
The graph $\SL{k}{m+1}$ does not occur as the prime character degree graph of any solvable group.
\end{proposition}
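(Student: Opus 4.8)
The plan is to run the same four-step argument used for the base case (Proposition \ref{bc1}), now with Hypothesis \ref{inhype} available. Assume for contradiction that $\SL{k}{m+1}=\Delta(G)$ for a solvable group $G$ with $|G|$ minimal. The goal is to show that $G$ has no normal nonabelian Sylow $p$-subgroup for every vertex $p$, deduce that the Fitting subgroup $F$ is minimal normal, and then apply Lemma \ref{lemma3} with $a=b_1$, $b=b_2$, $c=a_1$, $d=a_2$. Since $\SL{k}{m+1}$ has $2k+m+2\ge 5$ vertices, and $b_1$ is adjacent to the admissible vertex $a_1$ while $b_2$ is not adjacent to $a_1$ and $a_1$ is not adjacent to the admissible vertex $a_2$, this closing application is word-for-word the base case; all of the real work lies in re-establishing the structural lemmas in the presence of the $m+1$ doubled vertices $a_1,\ldots,a_{m+1}$.

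The single genuinely new ingredient is the inductive hypothesis, and it enters exactly where the base case appealed to Theorem \ref{KT} and Corollary \ref{subKT}. The key observation is that deleting any one doubled $b$-vertex---that is, any $b_i$ or $b_{k+i}$ with $1\le i\le m+1$---together with its incident edges leaves a graph isomorphic to $\SL{k}{m}$; indeed the base case is precisely the $m=0$ manifestation of this, with $\Gamma_{k+1,k}$ playing the role that $\SL{k}{m}$ plays here. Accordingly I would carry out, in order: (1) the analog of Lemma \ref{lemma<1}, showing every $a_i$ and every doubled $b$-vertex is strongly admissible---the doubled $a_1,\ldots,a_{m+1}$ are mutually symmetric and treated as $a_1$ was, the single $a_{m+2},\ldots,a_k$ (if any) are treated as $a_2$ was, via diameter-three counts in Proposition \ref{sassresult} and odd cycles in the complement by Theorem \ref{et}, while for each doubled $b$-vertex the ``remove the vertex'' clause is now supplied by Hypothesis \ref{inhype} and the edge removals either violate P\'alfy's condition (Lemma \ref{PC}) against $c$ or force diameter three; (2) the analog of Lemma \ref{lemma<2}, dispatching the single $b$-vertices $b_{m+2},\ldots,b_k$ via Lemma \ref{pi} with an admissible $a_j$ in the role of $s$; (3) the analog of Lemma \ref{lemma<3}, verifying Hypothesis \ref{newhype} for $p=c$ with $\pi=\{a_1,\ldots,a_k\}$ and $\rho=\{b_1,\ldots,b_{k+m+1}\}$ and invoking Theorem \ref{newmain}; and (4) the analog of Lemma \ref{minnorm}, ruling out a normal subgroup $N$ with $1<N<F$ by the same formation and $O^p(G)=G$ argument.

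I expect the main obstacle to be step (3), the verification of Hypothesis \ref{newhype} for $c$, because the non-neighbor set $\rho$ now carries the full doubled structure and conditions \eqref{33} and \eqref{44} fragment into many more subcases than before. In particular, \eqref{44} produces a nonempty $\tau$ for each of the $2(m+1)$ complete sets $\pi^*\cup\rho^*=\{a_i,b_i\}$ or $\{a_i,b_{k+i}\}$ (with $1\le i\le m+1$), giving $\tau=\{b_{k+i}\}$ or $\{b_i\}$, and each then demands that no proper connected subgraph on $\rho(G)\setminus\tau^*$ occur. That vertex set induces a copy of $\SL{k}{m}$, so Hypothesis \ref{inhype} immediately kills the full graph, but one must also exclude its \emph{proper} connected subgraphs on the same vertex set. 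The cleanest remedy is to strengthen the induction with a companion statement---the exact analog of Corollary \ref{subKT} for $\SL{k}{m}$---or, failing that, to check directly that each such edge deletion creates an odd cycle in the complement (Theorem \ref{et}) or forces diameter three in violation of Proposition \ref{sassresult}. The same subgraph bookkeeping recurs inside step (4), where the vertex sets $\rho(G)\setminus\{a_i\}$, $\rho(G)\setminus\{b_i\}$, and $\rho(G)\setminus\{b_{k+i}\}$ all produce $\SL{k}{m}$-copies to be dispatched by Hypothesis \ref{inhype}. Once this is settled, the minimal-normal Fitting argument and the closing application of Lemma \ref{lemma3} go through verbatim.
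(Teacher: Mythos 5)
Your proposal follows the paper's proof essentially verbatim: the paper likewise argues by contradiction with $|G|$ minimal, replicates Lemmas \ref{lemma<1}--\ref{minnorm} (invoking Theorem \ref{et}, Proposition \ref{sassresult}, Lemma \ref{pi}, Hypothesis \ref{newhype}, and the inductive Hypothesis \ref{inhype} in the same places you do), and closes with Lemma \ref{lemma3} using $a=b_1$, $b=b_2$, $c=a_1$, $d=a_2$. If anything your write-up is more careful than the paper's, since you explicitly flag that the inductive hypothesis alone does not rule out proper connected subgraphs of $\SL{k}{m}$ on the same vertex set and propose carrying the analog of Corollary \ref{subKT} through the induction---a point the paper leaves implicit (it only records that fact as a corollary after the main theorem, via Hypothesis \ref{newhype}\eqref{33}).
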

\begin{proof}
For the sake of contradiction, suppose $\SL{k}{m+1}=\Delta(G)$ for some finite solvable group $G$, where $|G|$ is minimal.

One can first replicate Lemma \ref{lemma<1} and show that $p$ is a strongly admissible vertex for all $p\in\{a_1\ldots,a_{m+1}\}\cup\{a_{m+2},\ldots,a_k\}\cup\{b_1,\ldots,b_{m+1},b_{k+1},\ldots,b_{k+m+1}\}$, which relies on Theorem \ref{et}, Proposition \ref{sassresult}, and Hypothesis \ref{inhype}. Thus, by Lemma \ref{strong}, $G$ does not have a normal nonabelian Sylow $p$-subgroup. Next, one can show that $G$ does not have a normal nonabelian Sylow $q$-subgroup for all $q\in\{b_{m+2},\dots,b_k\}$ by way of Lemma \ref{pi} (using an identical argument as that found in Lemma \ref{lemma<2}). Finally, showing that $\Delta(G)$ satisfies Hypothesis \ref{newhype}, similar to what was done in Lemma \ref{lemma<3}, guarantees that $G$ will have no normal nonabelian Sylow $c$-subgroup by employing Theorem \ref{newmain}.

Now that we know $G$ has no normal nonabelian Sylow $p$-subgroups, one can then follow a similar process as outlined in Lemma \ref{minnorm} and get that the Fitting subgroup $F$ is minimal normal in $G$. Therefore, taking $a=b_1$, $b=b_2$, $c=a_1$, and $d=a_2$ from Lemma \ref{lemma3} yield the contradiction.
\end{proof}

To conclude, we again state Theorem \ref{firstfam}:

\begin{thmmain}
The graph $\SL{k}{n}$ occurs as the prime character degree graph of a solvable group only when $(k,n)=(1,1)$ (see Figure \ref{fig11L}).
\end{thmmain}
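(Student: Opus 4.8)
The plan is to prove the final theorem by reducing it to the base case (Proposition \ref{bc1}) together with the inductive step (Proposition \ref{instep}), and then dispatching the single exceptional pair $(k,n)=(1,1)$ separately. First I would dispose of the ``only if'' direction's sole positive instance: when $(k,n)=(1,1)$, the graph $\SL{1}{1}$ (Figure \ref{fig11L}) does occur as a prime character degree graph because it is realizable as $\Gamma_{3,1}$ from \cite{BissLaub}, which is known to occur by Theorem \ref{KT} (the case $t=1$). This establishes that the exceptional pair genuinely arises, so the theorem's exception is not vacuous.

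Next I would establish the ``if'' direction, namely that $\SL{k}{n}$ fails to occur whenever $(k,n)\neq(1,1)$. The strategy here is induction on $n$, carried out over all admissible $k$ (those satisfying $k\geq\max\{2,n\}$) simultaneously. The base case $n=1$ is exactly Proposition \ref{bc1}, which asserts that $\SL{k}{1}$ is not a prime character degree graph for any $k\geq2$; this is proved via the sequence of technical lemmas (Lemmas \ref{lemma<1} through \ref{minnorm}) culminating in an application of Lemma \ref{lemma3}. For the inductive step, I would invoke Hypothesis \ref{inhype} (the graph $\SL{k}{m}$ does not occur) and then apply Proposition \ref{instep} to conclude that $\SL{k}{m+1}$ does not occur either. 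By induction on $n$, this covers all pairs with $n\geq1$ and $k\geq\max\{2,n\}$; the only remaining cases to check are those with $k=1$, which forces $n=1$ and yields precisely the exceptional (occurring) graph already handled.

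The essential mechanism across both the base case and inductive step is the same three-part argument: first show that $G$ (assumed minimal with $\Delta(G)=\SL{k}{n}$) has no normal nonabelian Sylow $p$-subgroup for \emph{any} vertex $p$—using strong admissibility via Lemma \ref{strong} for the vertices of $A$ and the ``doubled'' vertices of $B$, Lemma \ref{pi} for the remaining vertices of $B$, and the technical Hypothesis \ref{newhype} together with Theorem \ref{newmain} for the special vertex $c$; second, deduce that the Fitting subgroup $F$ is minimal normal in $G$ by the argument of Lemma \ref{minnorm}; and third, derive a contradiction from Lemma \ref{lemma3} with the choice $a=b_1$, $b=b_2$, $c=a_1$, $d=a_2$.

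I expect the main obstacle to lie in verifying Hypothesis \ref{newhype} for the special vertex $c$ in full generality, since this is the most technical of the three tools and requires careful bookkeeping of the sets $\pi^*\cup\rho^*$, the associated $\tau$'s, and the admissibility of every vertex in each such $\tau$, as well as checking the disconnected-subgraph condition against P\'alfy's inequality. The diameter-three case analysis invoking Proposition \ref{sassresult} is delicate because one must correctly identify the partition $\rho_1\cup\rho_2\cup\rho_3\cup\rho_4$ and track the inequalities $|\rho_1\cup\rho_2|\leq|\rho_3\cup\rho_4|$ and $|\rho_3\cup\rho_4|\geq2^{|\rho_1\cup\rho_2|}$ after each edge or vertex deletion; the doubling of the one-to-two edges makes these counts sensitive to the value of $n$, which is precisely why the induction on $n$ (rather than $k$) is natural and why the inductive hypothesis is needed to handle the subgraphs that appear in the strong-admissibility verification.
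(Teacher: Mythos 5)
Your proposal matches the paper's proof: the paper likewise realizes $\SL{1}{1}$ as $\Gamma_{3,1}$, then proves the negative direction by induction on $n$ with Proposition \ref{bc1} as the base case and Proposition \ref{instep} as the inductive step, each carried out via the same three-part mechanism you describe (eliminating normal nonabelian Sylow subgroups vertex-by-vertex, showing the Fitting subgroup is minimal normal, and concluding with Lemma \ref{lemma3} using $a=b_1$, $b=b_2$, $c=a_1$, $d=a_2$). No substantive differences.
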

\begin{proof}
Follows by the induction argument given through Proposition \ref{bc1}, Hypothesis \ref{inhype}, and Proposition \ref{instep}.
\end{proof}

\begin{corollary}
For all integers $1\leq n\leq k$, any proper connected subgraph of $\SL{k}{n}$ that has the same vertex set is not the prime character degree graph of any solvable group.
\end{corollary}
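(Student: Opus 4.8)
The plan is to establish this corollary by leveraging the machinery already developed for the main theorem, specifically the strong admissibility of the peripheral vertices. The key observation is that the proof of Theorem \ref{firstfam} does far more than show $\SL{k}{n}$ itself fails to occur; the lemmas demonstrate that nearly every vertex of $\SL{k}{n}$ is \emph{strongly admissible}, and by the definition of strong admissibility, removing any edges incident to such a vertex (or any edges between its neighbors) yields a graph that does not occur as $\Delta(G)$ for any solvable group. I would exploit exactly this. Given a proper connected subgraph $\Gamma'$ with the same vertex set as $\SL{k}{n}$, the graph $\Gamma'$ is obtained from $\SL{k}{n}$ by deleting at least one edge, and I want to show $\Gamma'$ cannot occur.

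First I would handle the degenerate case $(k,n)=(1,1)$ separately, since $\SL{1}{1}$ itself does occur. Here $\SL{1}{1}$ has only the four vertices $c, a_1, b_1, b_2$ with a small edge set, and one checks directly that every proper connected subgraph on the same four vertices either violates P\'alfy's condition, produces an odd cycle in the complement via Theorem \ref{et}, or becomes disconnected, so none occurs. For all other $(k,n)$, the argument is structural. Any edge of $\SL{k}{n}$ falls into one of a few types: an edge within the complete subgraph $A$ (or incident to $c$ inside $A\cup\{c\}$), an edge within the complete subgraph $B$, or a ``bridging'' edge of the form $a_i b_i$ or $a_i b_{k+i}$. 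For each edge type I would point to the relevant analysis already carried out in Lemmas \ref{lemma<1}, \ref{lemma<2}, and \ref{lemma<3} (and their analogues invoked in Proposition \ref{instep}), which show precisely that deleting that edge yields a graph barred by Theorem \ref{et}, Theorem \ref{KT}, Corollary \ref{subKT}, or Proposition \ref{sassresult}.

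The cleanest way to organize this is to invoke strong admissibility directly. Since every vertex $a_i$ and every ``one-to-two'' target vertex $b_1,\dots$ and $b_{k+1},\dots$ was shown to be strongly admissible, and strong admissibility forbids the occurrence of any subgraph obtained by deleting incident edges or edges between adjacent vertices, it suffices to observe that every edge of $\SL{k}{n}$ is incident to, or lies between two neighbors of, some strongly admissible vertex. Edges inside $A$ and edges from $c$ to $A$ are covered because the $a_i$ are strongly admissible; bridging edges $a_i b_i$ are incident to the strongly admissible $a_i$; and edges inside $B$ lie between two neighbors of an appropriate strongly admissible $b_j$ (or are directly covered by the $\Gamma_{k+1,k}$ analysis via Theorem \ref{KT} and Corollary \ref{subKT}). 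Deleting any single such edge therefore already produces a non-occurring graph, and since a proper subgraph deletes at least one edge, $\Gamma'$ does not occur.

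The main obstacle I anticipate is bookkeeping rather than any genuinely new idea: I must verify that the edge-deletion cases analyzed in the lemmas, which were stated for the purpose of checking admissibility and strong admissibility of individual vertices, collectively exhaust \emph{every} edge of $\SL{k}{n}$, including edges internal to $B$ that are not of the form $b_i b_j$ with one endpoint a one-to-two target. I would need to confirm that the strong-admissibility verifications for the $b_j$ in Lemma \ref{lemma<1} (via Theorem \ref{KT} and Corollary \ref{subKT}) already rule out loss of the relevant internal $B$-edges, and that the inductive step in Proposition \ref{instep} extends these conclusions to all $(k,n)$ with $n\geq 2$. Once that exhaustiveness is confirmed, the corollary follows immediately, since no matter which edge is removed to form the proper connected subgraph $\Gamma'$, the resulting graph has already been shown not to be the prime character degree graph of any solvable group.
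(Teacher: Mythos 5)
Your overall instinct---that the corollary should follow from the edge-deletion analysis already carried out for the main theorem---is the right one, and it is in fact how the paper proceeds: for $(k,n)\neq(1,1)$ it simply observes that $\SL{k}{n}$ was shown to satisfy Hypothesis \ref{newhype}\eqref{33} with $\pi^*=\pi$, and in that case $\{p\}\cup\pi^*\cup\rho$ is the entire vertex set, so the statement ``no proper connected subgraph with vertex set $\{p\}\cup\pi^*\cup\rho$ occurs'' (verified in case (d) of Lemma \ref{lemma<3}, and in its analogue inside Proposition \ref{instep}) is literally the corollary. However, your proposed ``clean organizing principle'' rests on a misreading of strong admissibility. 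Part (ii) of that definition concerns subgraphs obtained by \emph{removing the vertex $p$ itself}, its incident edges, and then some edges between former neighbors of $p$; every such subgraph has vertex set $\rho(\Gamma)\setminus\{p\}$, not the same vertex set. So the fact that an edge of $B$ ``lies between two neighbors of a strongly admissible $b_j$'' tells you nothing about the subgraph in which that edge is deleted but $b_j$ is retained. For those internal $B$-edges (e.g.\ $b_2b_3$ in $\SL{k}{1}$, neither endpoint of which is shown admissible) the correct mechanism is the one in Lemma \ref{lemma<3}(d): deleting any edge between two vertices of $\rho$, or two vertices of $\{p\}\cup\pi$, creates an odd cycle in the complement and is excluded by Theorem \ref{et}. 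Admissibility (not strong admissibility) of a vertex is what rules out deleting edges \emph{incident} to it while keeping the vertex set fixed.

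A second, smaller gap is your closing inference: ``deleting any single such edge produces a non-occurring graph, and a proper subgraph deletes at least one edge, hence $\Gamma'$ does not occur.'' Non-occurrence is not automatically inherited when further edges are deleted. It happens to propagate for the complement-odd-cycle obstructions (adding edges to the complement cannot destroy an existing odd cycle), but the diameter-three obstructions from Proposition \ref{sassresult} must be re-examined for each combination of deletions; this is why Lemma \ref{lemma<3}(d) explicitly treats ``any combination of edges which span from $a_i$ to $b_i$'' rather than one edge at a time. If you repair these two points---replace the strong-admissibility mechanism by admissibility plus the Theorem \ref{et} argument for edges inside $\pi\cup\{p\}$ and inside $\rho$, and handle multi-edge deletions as in case (d)---your argument reduces to the paper's, namely to the verification of Hypothesis \ref{newhype}\eqref{33} with $\pi^*=\pi$. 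Your separate direct check of $\SL{1}{1}$ is fine and matches the paper's first sentence.
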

\begin{proof}
First observe that this holds for $\SL{1}{1}$. For $(k,n)\neq(1,1)$, notice that $\SL{k}{n}$ satisfies Hypothesis \ref{newhype}\eqref{33} for $\pi^*=\pi$.
\end{proof}

\section{Observations and Generalizations}\label{last}

Having classified the family $\{\SL{k}{n}\}$, there are natural extensions possibly worthy of investigation. For example, one could define a family of graphs using one-to-three edge mappings, as opposed to the focus on one-to-two edge mappings in $\{\SL{k}{n}\}$ (see Figure \ref{fig1-3}). Naturally, one can then extend to one-to-$m$ edge mappings for any positive integer $m$.
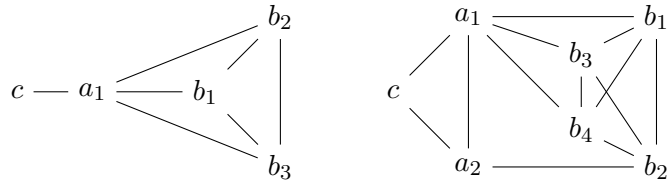
\begin{figure}[htb]
    \centering
$
\begin{tikzpicture}[scale=2]
\node (0a) at (0,.5) {$c$};
\node (1a) at (.5,.5) {$a_1$};
\node (11a) at (1.25,.5) {$b_1$};
\node (22a) at (1.75,1) {$b_2$};
\node (33a) at (1.75,0) {$b_3$};
\path[font=\small,>=angle 90]
(0a) edge node [right] {$ $} (1a)
(11a) edge node [right] {$ $} (22a)
(11a) edge node [right] {$ $} (33a)
(22a) edge node [right] {$ $} (33a)
(1a) edge node [right] {$ $} (11a)
(1a) edge node [right] {$ $} (22a)
(1a) edge node [right] {$ $} (33a);
\node (0b) at (2.5,.5) {$c$};
\node (1b) at (3,1) {$a_1$};
\node (2b) at (3,0) {$a_2$};
\node (11b) at (4.25,1) {$b_1$};
\node (22b) at (4.25,0) {$b_2$};
\node (33b) at (3.75,.75) {$b_3$};
\node (44b) at (3.75,.25) {$b_4$};
\path[font=\small,>=angle 90]
(0b) edge node [right] {$ $} (1b)
(0b) edge node [right] {$ $} (2b)
(1b) edge node [right] {$ $} (2b)
(11b) edge node [right] {$ $} (22b)
(11b) edge node [right] {$ $} (33b)
(11b) edge node [right] {$ $} (44b)
(22b) edge node [right] {$ $} (33b)
(22b) edge node [right] {$ $} (44b)
(33b) edge node [right] {$ $} (44b)
(1b) edge node [right] {$ $} (11b)
(2b) edge node [right] {$ $} (22b)
(1b) edge node [right] {$ $} (33b)
(1b) edge node [right] {$ $} (44b);
\end{tikzpicture}
$
    \caption{Examples of graphs with one-to-three edge mappings}
    \label{fig1-3}
\end{figure}
Even further, one may consider the possibility of having a mixture of one-to-one, one-to-two, and one-to-three edge mappings between the vertices of $A$ and the vertices of $B$. The next logical generalization would be to consider a graph with a mixture of one-to-one through one-to-four mappings, and so on. Presumably, one could use a starting point similar to the one used in this paper, where the case $n=1$ is handled and the remaining ones are addressed inductively. 

Of course, this generalization would complicate current notation as well as the pool of cases to consider. It is not altogether clear at the moment whether, after making the necessary adjustments in assumptions, the same approach used in this paper would be sufficient for handling this generalized version of $\{\SL{k}{n}\}$, but it would certainly rely on the family of graphs studied in Section \ref{first}.
Another direction to consider would be to introduce multiple ``special'' vertices that are adjacent only to vertices in either the left or right subgraph (see Figure \ref{fig2-3}).
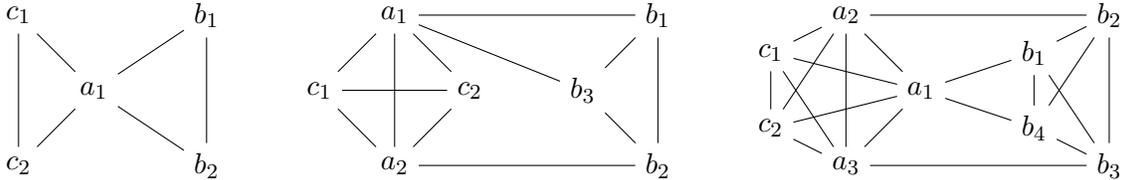
\begin{figure}[htb]
    \centering
$
\begin{tikzpicture}[scale=2]
\node (0a) at (0,1) {$c_1$};
\node (00a) at (0,0) {$c_2$};
\node (1a) at (.5,.5) {$a_1$};
\node (11a) at (1.25,1) {$b_1$};
\node (22a) at (1.25,0) {$b_2$};
\path[font=\small,>=angle 90]
(0a) edge node [right] {$ $} (00a)
(0a) edge node [right] {$ $} (1a)
(00a) edge node [right] {$ $} (1a)
(11a) edge node [right] {$ $} (22a)
(1a) edge node [right] {$ $} (11a)
(1a) edge node [right] {$ $} (22a);
\node (0b) at (2,.5) {$c_1$};
\node (00b) at (3,.5) {$c_2$};
\node (1b) at (2.5,1) {$a_1$};
\node (2b) at (2.5,0) {$a_2$};
\node (33b) at (3.75,.5) {$b_3$};
\node (11b) at (4.25,1) {$b_1$};
\node (22b) at (4.25,0) {$b_2$};
\path[font=\small,>=angle 90]
(0b) edge node [right] {$ $} (00b)
(0b) edge node [above] {$ $} (1b)
(0b) edge node [above] {$ $} (2b)
(00b) edge node [above] {$ $} (1b)
(00b) edge node [above] {$ $} (2b)
(1b) edge node [above] {$ $} (2b)
(11b) edge node [above] {$ $} (22b)
(11b) edge node [above] {$ $} (33b)
(22b) edge node [above] {$ $} (33b)
(1b) edge node [above] {$ $} (11b)
(2b) edge node [above] {$ $} (22b)
(1b) edge node [above] {$ $} (33b);
\node (0c) at (5,.75) {$c_1$};
\node (00c) at (5,.25) {$c_2$};
\node (2c) at (5.5,1) {$a_2$};
\node (3c) at (5.5,0) {$a_3$};
\node (1c) at (6,.5) {$a_1$};
\node (11c) at (6.75,.75) {$b_1$};
\node (44c) at (6.75,.25) {$b_4$};
\node (22c) at (7.25,1) {$b_2$};
\node (33c) at (7.25,0) {$b_3$};
\path[font=\small,>=angle 90]
(0c) edge node [right] {$ $} (00c)
(0c) edge node [above] {$ $} (1c)
(0c) edge node [above] {$ $} (2c)
(0c) edge node [above] {$ $} (3c)
(00c) edge node [above] {$ $} (1c)
(00c) edge node [above] {$ $} (2c)
(00c) edge node [above] {$ $} (3c)
(1c) edge node [above] {$ $} (2c)
(1c) edge node [above] {$ $} (3c)
(2c) edge node [right] {$ $} (3c)
(11c) edge node [above] {$ $} (22c)
(11c) edge node [right] {$ $} (33c)
(11c) edge node [above] {$ $} (44c)
(22c) edge node [above] {$ $} (33c)
(22c) edge node [above] {$ $} (44c)
(33c) edge node [above] {$ $} (44c)
(1c) edge node [above] {$ $} (11c)
(2c) edge node [above] {$ $} (22c)
(3c) edge node [above] {$ $} (33c)
(1c) edge node [above] {$ $} (44c);
\end{tikzpicture}
$
    \caption{Examples of graphs with two ``special" vertices}
    \label{fig2-3}
\end{figure}
It may also be worth considering whether all of these special vertices need to be associated to the same subgraph, or if we can allow a mixture in which some ``special'' vertices are in the left subgraph while others are in the right subgraph. Notably, as alluded to in Remark \ref{rmk}, this would immediately yield a graph with diameter three; one would need to alter the edge rules of the graphs. Moreover, this endeavor would likely rely on the classifications of both the families $\{\SL{k}{n}\}$ and $\{\SR{k}{n}\}$.

We intend to revisit the family of graphs $\{\SR{k}{n}\}$ in a follow-up paper, as the current technique of using subgraphs with diameter three does not sufficiently resolve the classification of the graphs $\{\SR{k}{n}\}$. It is not difficult to see that $\SR{1}{1}$ occurs as $\Delta(G)$ for some solvable group $G$ and $\SR{k}{1}$ does \emph{not} occur as $\Delta(G)$ for any solvable $G$ when $k \geq 3$. Observe, however, that the graph $\SR{2}{1}$ is one of the graphs in \cite{BL2} that is unclassified. In particular, it's currently unknown whether or not $\SR{2}{1}$ appears as $\Delta(G)$ for some solvable group $G$ despite being previously studied. 

For $n \geq 2$, the graph $\SR{k}{n}$ presents immediate obstacles to the current approach; a completely different technique is likely necessary.

\end{document}